\numberwithin{equation}{section}
  \theoremstyle{definition}
  \newtheorem{example}{\protect\examplename}[section]
  \theoremstyle{definition}
  \newtheorem{defn}{\protect\definitionname}[section]
  \theoremstyle{plain}
  \newtheorem{lem}{\protect\lemmaname}[section]
  \theoremstyle{plain}
  \newtheorem{prop}{\protect\propositionname}[section]
  \theoremstyle{plain}
  \newtheorem{cor}{\protect\corollaryname}[section]
\let\originalleft\left
\let\originalright\right
\renewcommand{\left}{\mathopen{}\mathclose\bgroup\originalleft}
\renewcommand{\right}{\aftergroup\egroup\originalright}
  \providecommand{\definitionname}{Definition}
  \providecommand{\examplename}{Example}
  \providecommand{\lemmaname}{Lemma}
  \providecommand{\propositionname}{Proposition}
\providecommand{\corollaryname}{Corollary}
\begin{document}

\title[Magnitudes Reborn]{Magnitudes Reborn:\\
 Quantity Spaces as Scalable Monoids}

\author{Dan Jonsson}

\address{Dan Jonsson, University of Gothenburg, SE-405 30 Gothenburg, Sweden }

\email{dan.jonsson@gu.se}
\begin{abstract}
In ancient Greek mathematics, magnitudes such as lengths were strictly
distinguished from numbers. In modern quantity calculus, a distinction
is made between quantities and scalars that serve as measures of quantities.
The author believes, for reasons apparent from this article, that
quantities should play a major rather than a minor role in modern
mathematics.

The extended Introduction includes a survey of the historical development
and theoretical structure of the pre-modern theory of magnitudes and
numbers. In Part 1, work, insights and controversies related to quantity
calculus from Euler onward are reviewed. In Parts 2 and 3, we define
scalable monoids and, as a special case, quantity spaces; both can
be regarded as universal algebras. Scalable monoids are related to
rings and modules, and quantity spaces are to scalable monoids as
vector spaces are to modules. Subalgebras and homomorphic images of
scalable monoids can be formed, and tensor products of scalable monoids
can be constructed as well. We also define and investigate congruence
relations on scalable monoids, unit elements of scalable monoids,
basis-like substructures of scalable monoids and quantity spaces,
and scalar representations of elements of quantity spaces. The mathematical
theory of quantity spaces is presented with a view to metrological
applications.

This article supersedes arXiv:1503.00564 and complements arXiv:1408.5024.
\end{abstract}

\maketitle

\section*{Introduction}

Formulas such as $E=\tfrac{mv^{2}}{2}$ or $\frac{\partial T}{\partial t}=\kappa\frac{\partial^{2}T}{\partial x^{2}}$,
used to express physical laws,\linebreak{}
describe relationships between scalars, commonly real numbers. An
alternative interpretation of such equations is possible, however.
Since the scalars assigned to the variables in these equations are
numerical measures of certain quantities, the equations express relationships
between these quantities as well. For example, $E=\tfrac{mv^{2}}{2}$
can also be interpreted as describing a relation between an energy
$E$, a mass $m$ and a velocity $v$ \textendash{} three underlying
physical quantities, whose existence and properties do not depend
on the scalars that may be used to represent them. With this interpretation,
though, $\tfrac{mv^{2}}{2}$ and similar expressions will have meaning
only if operations on quantities, corresponding to operations on numbers,
are defined. In other words, an appropriate way of calculating with
quantities, a \emph{quantity calculus}, needs to be available.

In a useful survey \cite{BOER}, de Boer described the development
of quantity calculus until the late 20th century, starting with Maxwell's
\cite{MAXW} concept of a physical quantity $q$ comprised of a unit
quantity $\left[q\right]$ of the same kind as $q$ and a scalar $\left\{ q\right\} $
which is the measure of $q$ relative to $\left[q\right]$, so that
we can write $q=\left\{ q\right\} \left[q\right]$. Like Wallot, who
reformulated $q=\left\{ q\right\} \left[q\right]$ as $q=\left(q/\left[q\right]\right)\left[q\right]$
in an important article in \emph{Handbuch der Physik} 1926 \cite{WAL},
de Boer argued, though, that the notion of a physical quantity should
be a primitive one, not expressed by means of non-quantities.

The roots of quantity calculus go far deeper in the history of mathematics
than to Wallot, however, or even to Maxwell or other scientists of
the modern era, such as Fourier \cite{FOUR}; the origins of quantity
calculus can be traced back to ancient Greek geometry and arithmetic,
as codified in Euclid's \emph{Elements \cite{EUCL}}. 

Of fundamental importance in the \emph{Elements} is the distinction
between \emph{numbers (multitudes)} and \emph{magnitudes}. The notion
of a number (\emph{arithmos}) is based on that of a ''unit'' or
''monad'' (\emph{monas}); a number is ''a multitude composed of
units''. Thus, a number is essentially a positive integer. (A collection
of units containing just one unit was not, in principle, considered
to be a multitude of units in Greek arithmetic, so $1$ was not, strictly
speaking, a number.) Numbers can be compared, added and multiplied,
and a smaller number can be subtracted from a larger one, but the
ratio of two numbers $m,n$ is not itself a number but just a pair
$m:n$ expressing relative size. Ratios can, however, be compared;
$m:n=m':n'$ means that $mn'=nm'$. A bigger number $m$ is said to
be measured by a smaller number $k$ if $m=rk$ for some number $r$;
a prime number is a number that is not measured by any other number
(or measured only by $1$), and $m,n$ are relatively prime when there
is no number (except 1) measuring both. 

Magnitudes (\emph{megethos}), on the other hand, are phenomena such
as lengths, areas, volumes or times. Unlike numbers, magnitudes are
of \emph{different kinds, }and while the magnitudes of a particular
kind correspond loosely to numbers, making measurement of magnitudes
possible, the magnitudes form a continuum, and there is no distinguished
''unit magnitude''. In Greek mathematics, magnitudes of the same
kind can be compared and added, and a smaller magnitude can be subtracted
from a larger one of the same kind, but magnitudes cannot, in general,
be multiplied or divided. One can form the ratio of two magnitudes
of the same kind, $p$ and $q$, but this is not a magnitude but just
a pair $p:q$ expressing relative size. A greater magnitude $q$ is
said to be measured by a smaller magnitude $u$ if there is a number
$n$ such that $q$ is equal to $u$ taken $n$ times; we may write
this as $q=n\times u$ here.

Remarkably, the first three propositions about magnitudes proved by
Euclid in the \emph{Elements} are, in the notation used here, 
\begin{gather*}
n\times(u_{1}\dotplus\cdots\dotplus u_{k})=n\times u_{1}\dotplus\cdots\dotplus n\times u_{k},\\
(n_{1}+\cdots+n_{k})\times u=n_{1}\times u\dotplus\cdots\dotplus n_{k}\times u,\qquad m\times(n\times u)=(mn)\times u,
\end{gather*}
where $n,m,n_{1},\ldots,n_{k}$ are numbers (\emph{arithmoi}), $u$
is a magnitude, $u_{1},\ldots,u_{k}$ are magnitudes of the same kind,
and $q_{1}\dotplus\cdots\dotplus q_{k}$ is the sum of magnitudes.
As shown in Section \ref{subsec:91}, these identities are fundamental
in modern quantity calculus as well.

If $p$ and $q$ are magnitudes of the same kind, and there is some
magnitude $u$ of this kind and some numbers $m,n$ such that $p=m\times u$
and $q=n\times u$, then $p$ and $q$ are said to be ''commensurable'';
the ratio of magnitudes $p:q$ can then be represented by the ratio
of numbers $m:n$.\footnote{It is natural to assume that if $p=m\times u=m'\times u'$ and $q=n\times u=n'\times u'$
then $m:n=m':n'$, so that the representation of $p:q$ is unique.} However, magnitudes may also be ''relatively prime''; it may happen
that $p:q$ cannot be expressed as $m:n$ for any numbers $m,n$ because
there are no $m,n,u$ such that $p=m\times u$ and $q=n\times u$.
In view of the Pythagorean philosophical conviction of the primacy
of numbers, the discovery of examples of such ''incommensurable''
magnitudes created a deep crisis in early Greek mathematics \cite{HASS},
a crisis that also affected the foundations of geometry. If ratios
of \emph{arithmoi} do not always suffice to represent ratios of magnitudes,
it seems that it would not always be possible to express in terms
of \emph{arithmoi} the fact that two ratios of magnitudes are equal,
as are the ratios of the lengths of corresponding sides of similar
triangles. This difficulty was resolved by Eudoxos, who realized that
a ''proportion'', that is, a relation among magnitudes of the form
\emph{''}$p$ is to $q$ as $p'$ is to $q'$\emph{''}, conveniently
denoted $p\vcentcolon q\dblcolon p'\vcentcolon q'$, can be defined
numerically even if there is no pair of ratios of \emph{arithmoi}
$m:n$ and $m':n'$ corresponding to $p:q$ and $p':q'$, respectively,
so that $p:q\dblcolon p':q'$ cannot be inferred from $m:n=m':n'$.
Specifically, Eudoxos invented an ingenious indirect way of determining
if $p\vcentcolon q\dblcolon p'\vcentcolon q'$ in terms of nothing
but \emph{arithmoi} by means of a construction similar to the Dedekind
cut, as described in Book V of the \emph{Elements}. Using modern terminology,
one can say that Eudoxos defined an equivalence relation $\!\dblcolon$
between pairs of magnitudes of the same kind in terms of positive
integers, and as a consequence it became possible to conceptualize
in terms of \emph{arithmoi} not only ratios of magnitudes corresponding
to rational numbers but also ratios of magnitudes corresponding to
irrational numbers. Eudoxos thus reconciled the continuum of magnitudes
with the discrete \emph{arithmoi}, but in retrospect this feat reduced
the incentive to rethink the Greek notion of number, to generalize
the \emph{arithmoi}.

To summarize, Greek mathematicians used two notions of muchness, and
built a theoretical system around each notion. These systems were
connected by relationships of the form $q=n\times u$, where $q$
is a magnitude, $n$ a number and $u$ a magnitude of the same kind
as $q$, foreboding from the distant past Maxwell's conceptualization
of a physical quantity, although Euclid did not define magnitudes
in terms of units and numbers. 

The modern theory of numbers dramatically extends the theory of numbers
in the \emph{Elements}. Many types of numbers other than positive
integers have been added, and the notion of a number as an element
of an algebraic system has come to the forefront. The modern notion
of number was not developed by a straight-forward extension of the
concept of \emph{arithmos}, however; the initial development of the
new notion of number during the Renaissance was strongly inspired
by the ancient theory of magnitudes.

The beginning of the Renaissance saw renewed interest in the classical
Greek theories of magnitudes and numbers as known from Euclid's \emph{Elements},
but later these two notions gradually fused into that of a real number.
Malet \cite{MAL} remarks:
\begin{quotation}
{\footnotesize{}As far as we know, not only was the neat and consistent
separation between the Euclidean notions of numbers and magnitudes
preserved in Latin medieval translations {[}...{]}, but these notions
were still regularly taught in the major schools of Western Europe
in the second half of the 15th century. By the second half of the
17th century, however, the distinction between the classical notions
of (natural) numbers and continuous geometrical magnitudes was largely
gone, as were the notions themselves.}{\footnotesize \par}
\end{quotation}
The force driving this transformation was the need for a continuum
of numbers as a basis for computation; the discrete \emph{arithmoi}
were not sufficient. As magnitudes of the same kind form a continuum,
the idea emerged that numbers should be regarded as an aspect of magnitudes.
''Number is to magnitude as wetness is to water'' said Stevin in
\emph{L'Arithmétique} \cite{STEV}, published 1585, and defined a
number as ''cela, par lequel s\textquoteright explique la quantité
de chascune chose\textquotedblright{} (that by which one can tell
the quantity of anything). Thus, numbers were seen to form a continuum
by virtue of their intimate association with magnitudes.

Stevin's definition of number is rather vague, and it is difficult
to see how a magnitude can be associated with a definite number, considering
that the numerical measure of a magnitude depends on a choice of a
unit magnitude. The notion of number was, however, refined during
the 17th century. In \emph{La Geometrie} \cite{DESC}, where Descartes
laid the groundwork for analytic geometry, he implicitly identified
numbers with \emph{ratios} of two magnitudes, namely lengths of line
segments, one of which was considered to have unit length,\footnote{In Greek mathematics, the product of a length and a length was an
area, but Descartes argued in \emph{La Geometrie} that it could also
be another length. Descartes did not really multiply lengths, however;
he multiplied the ratios of two lengths $\ell_{1},\ell_{2}$ to a
fixed length $\ell_{0}$ to obtain the ratio of a third length $\ell$
to $\ell_{0}$, using a geometrical construction with similar triangles
such that the number representing the ratio of $\ell$ to $\ell_{0}$
became equal to the product of the number representing the ratio of
$\ell_{1}$ to $\ell_{0}$ and the number representing the ratio of
$\ell_{2}$ to $\ell_{0}$. (See the first figure in \emph{La Geometrie}.)} and in \emph{Universal Arithmetick} \cite{NEWT},\footnote{This was a translation of the Latin original \emph{Arithmetica Universalis},
first printed in Cambridge in 1707 and based on lecture notes by Newton
for the period 1673 to 1683. } Newton, who had studied Descartes thoroughly, defined a number as
follows:
\begin{quote}
{\footnotesize{}By }\emph{\footnotesize{}Number}{\footnotesize{} we
mean, not so much a Multitude of Unities, as the abstracted }\emph{\footnotesize{}Ratio}{\footnotesize{}
of any Quantity, to another Quantity of the same Kind, which we take
for Unity.}{\footnotesize \par}
\end{quote}
In modern terminology, a ratio of quantities of the same kind is a
'dimensionless' quantity. Systems of such quantities contain a canonical
unit quantity $\boldsymbol{1}$, and addition, subtraction, multiplication
and division of dimensionless quantities yield dimensionless quantities.
Hence, a number and the corresponding dimensionless quantity are quite
similar; if there is a difference then Newton's ''abstracted'' ratios
of quantities are numbers, not quantities. Quantities, especially
'dimensionful' quantities, classical magnitudes, were thus needed
only as a scaffolding for the new notion of numbers, and when this
notion had been established its origins fell into oblivion and magnitudes
fell out of fashion. The tradition from Euclid paled away, but the
idea that numbers specify quantities relative to other quantities
remained.

While the Greek theory of magnitudes derived from geometry, the new
theory of quantities served the needs of modern mathematical physics,
which flourished from the second half of the 18th century. In Section
IX, Chapter II of \emph{Théorie analytique de la Chaleur} \cite{FOUR},
Fourier explained how physical quantities related to the numbers in
his equations:
\begin{quotation}
{\footnotesize{}Pour mesurer {[}des quantités qui entrent dans notre
analyse{]} et les exprimer en nombre on les compare a diverses sortes
d'unités, au nombre de cinq, savoir : l'unités de longueur, l'unités
de temps, celle de la temperature, celle du poids, et enfin l'unité
qui sert a mesurer les quantités de chaleur.}{\footnotesize \par}
\end{quotation}
We recognize here the ideas that there are quantities of different
kinds and that the number associated with a quantity depends on the
choice of a unit quantity of the same kind. Fourier also introduced
the powerful notion of \emph{``dimensions''} of quantities. 

Using the modern notion of, for example, a real number, we can generalize
relationships of the form $q=n\times u$, where $n$ is an \emph{arithmos}
and $u$ is a magnitude that measures (divides) $q$, to relationships
of the form $q=\mu\cdot u$, where $u$ is a freely chosen unit quantity
of the same kind as $q$ and $\mu$ is a number specifying the size
of $q$ relative to $u$, the measure of $q$ relative to $u$.\footnote{Thus, originally a measure was a magnitude, but this term is now technically
used to refer to a number, although a measure can still mean ``a
standard or unit of measurement''.} We may write $\mu=f\left(q,u\right)$, noting that with any two of
$\mu$, $q$ and $u$ given, the third is uniquely determined.

Fourier realized that the measure of a quantity may be defined in
terms of measures of other quantities, in turn dependent on the units
for these quantities. For example, the measure of a velocity depends
on a unit of length and a unit of time since a velocity is defined
in terms of a length and a time, and the measure of an area indirectly
depends on a unit of length.

Specifically, let the measure $\mu_{v}$ of a velocity $v$ be given
by $\mu_{v}=F\left(\mu_{\ell},\mu_{t}\right)=F\left(f_{\ell}\left(\ell,u_{\ell}\right),f_{t}\left(t,u_{t}\right)\right)$,
where $F\left(x,y\right)=xy^{-1}$. Generalizing the magnitude identity
$m\times\left(n\times u\right)=mn\times u$, we have $\mu\cdot\left(\nu\cdot u\right)=\mu\nu\cdot u$.
Thus, if $\lambda\neq0$ then $\lambda\mu\cdot\left(\lambda^{-1}\cdot u\right)=\mu\cdot u$,
so $f\left(q,\lambda^{-1}\cdot u\right)=\lambda\mu=\lambda f\left(q,u\right)$,
so it follows from the definition of $F$ that
\[
LT^{-1}\mu_{v}=F\left(f_{\ell}\left(\ell,L^{-1}\cdot u_{\ell}\right),f_{t}\left(t,T^{-1}\cdot u_{t}\right)\right)
\]
for any non-zero numbers $L,T$. Also, let the measure $\mu_{a}$
of the area $a$ of a rectangle be given by $\mu_{a}=G\left(\mu_{\ell},\mu_{w}\right)=G\left(f_{\ell}\left(\ell,u_{\ell}\right),f_{\ell}\left(w,u_{\ell}\right)\right)$,
where $G\left(x,y\right)=xy$. Then, 
\[
L^{2}\mu_{a}=G\left(f_{\ell}\left(\ell,L^{-1}\cdot u_{\ell}\right),f_{\ell}\left(w,L^{-1}\cdot u_{\ell}\right)\right).
\]
Fourier pointed out that quantity terms can be equal or combined by
addition or subtraction only if they agree with respect to each \emph{exposant
de dimension}, having identical patterns of exponents in expressions
such as $LT^{-1}$, $LT^{-2}$ or $L^{2}$, since otherwise the validity
of numerical equations corresponding to quantity equations would depend
on an arbitrary choice of units. He thus introduced the principle
of dimensional homogeneity for equations that contain quantities.

Note that if $q=\mu\cdot u$ then $\lambda\cdot q=\lambda\cdot\left(\mu\cdot u\right)=\lambda\mu\cdot u$,
so $f\left(\lambda\cdot q,u\right)=\lambda\mu=\lambda f\left(q,u\right)$,
where $u$ is a fixed unit. Thus, in a sense turning Fourier's argument
around, we obtain the equations
\begin{gather*}
LT^{-1}\mu_{v}=F\left(f_{\ell}\left(L\cdot\ell,u_{\ell}\right),f_{t}\left(T\cdot t,u_{t}\right)\right),\\
L^{2}\mu_{a}=G\left(f_{t}\left(L\cdot\ell,u_{\ell}\right),f_{\ell}\left(L\cdot w,u_{\ell}\right)\right).
\end{gather*}
Eliminating the fixed units $u_{\ell}$ and $u_{t}$, these equations
can be written as
\begin{gather*}
\varPhi\left(L\cdot\ell,T\cdot t\right)=LT^{-1}\varPhi\left(\ell,t\right),\\
\varGamma\left(L\cdot\ell,L\cdot w\right)=L^{2}\varGamma\left(\ell,w\right),
\end{gather*}
and we can regard $\varPhi$ and $\varGamma$ as quantity-valued functions
because the units for velocity and area are fixed since they depend
on the fixed units for length and time. The bilinearity properties
of $\varPhi$ and $\varGamma$ suggest that we write $\varPhi\left(\ell,t\right)$
as $\ell t^{-1}$ and $\varGamma\left(\ell,w\right)$ as $\ell w$.
Generalizing this, we may introduce the idea that quantities of the
same or different kinds can be multiplied and divided, suggesting
that we can form arbitrary expressions of the form $\prod_{i=1}^{n}q_{i}^{k_{i}},$
where $k_{i}$ are integers. It should be emphasized, however, that
Fourier did not formally define multiplication of quantities, so we
are talking about a dual line of thought that he could have pursued
but apparently did not.

It is clear that the Greek mathematicians' distinction between numbers
and magnitudes is closely related to the modern distinction between
scalars and quantities. In view of Fourier's contribution, it may
be said that the foundation of a modern quantity calculus incorporating
this distinction and treating quantities as mathematical objects as
real as numbers was laid early in the 19th century. Subsequent progress
in this area of mathematics has not been fast and straight-forward,
however. In his survey from 1994, de Boer noted that the modern theory
of quantities had not yet met its Euclid; he concluded that ''a satisfactory
axiomatic foundation for the quantity calculus'' had not yet been
formulated \cite{BOER}. 

Gowers \cite{GOV1} points out that many mathematical constructs are
not defined directly by describing their essential properties, but
indirectly by \emph{construction-definitions}, specifying constructions
that can be shown to have these properties.\footnote{There are mathematical objects for which only construction-definitions
are available, so that the mathematician's task becomes to find the
properties of these constructions. A major example is the natural
numbers, which were created by God, as Kronecker put it, leaving it
to humans to discover their properties.} For example, an ordered pair $\left(x,y\right)$ may be defined by
a construction-definition as a set $\left\{ x,\left\{ y\right\} \right\} $;
it can be shown that this construction has the required properties,
namely that $\left(x,y\right)=\left(x',y'\right)$ if and only if
$x=x'$ and $y=y'$. Many contemporary formalizations of the notion
of a quantity use definitions relying on constructions, often defining
quantities in terms of (something like) scalar-unit pairs, in the
tradition from Maxwell. (See Sections \ref{sec:1-4} and \ref{subsec:19}
for some specifics.) However, this is rather like defining a vector
as a coordinates-basis pair rather than as an element of a vector
space, the modern definition.

Although magnitudes are illustrated by line segments in the \emph{Elements},
the notion of a magnitude is abstract and general. Remarkably, Euclid
dealt with this notion in a very modern way. While he carefully defined
other important objects such as points, lines and numbers in terms
of inherent properties, there is no statement about what a magnitude
''is''. Instead, magnitudes are characterized by how they relate
to other magnitudes through their roles in a system of magnitudes,
to paraphrase Gowers \cite{GOW2}. 

In the same spirit, that of modern algebra, quantities are defined
in this article simply as elements of a \emph{quantity space}. Thus,
the focus is moved from individual quantities and operations on them
to the systems to which the quantities belong, meaning that the notion
of quantity calculus will give way to that of a quantity space. This
article elaborates on the notion of a quantity space introduced in
\cite{JON1,JON2}.

In the conceptual framework of universal algebra, a quantity space
is just a certain \emph{scalable monoid} $\left(X,\mathsf{\ast},(\omega{}_{\lambda})_{\lambda\in R},1_{Q}\right)$,
where $X$ is the underlying set of the algebra, $\left(X,\mathsf{*},1_{Q}\right)$
is a monoid where we write $\ast\left(x,y\right)$ as $xy$, and $\omega_{\lambda}\left(x\right)$
is a scalar product $\lambda\cdot x$ such that $\lambda$ belongs
to a fixed ring $R$, $x\in X$, $\omega_{1}\left(x\right)=x$ for
all $x\in X$, $\omega_{\lambda}\left(\omega_{\kappa}\left(x\right)\right)=\omega_{\lambda\kappa}\left(x\right)$
for all $\lambda,\kappa\in R$, $x\in X$, and $\omega{}_{\lambda}\left(xy\right)=\omega{}_{\lambda}\left(x\right)\,y=x\,\omega{}_{\lambda}\left(y\right)$
for all $\lambda\in R$, $x,y\in X$. 

A scalable monoid $X$ is partitioned into \emph{orbit classes}, which
are equivalence classes with respect to the relation $\sim$ defined
by $x\sim y$ if and only if $\omega_{\alpha}\left(x\right)=\omega_{\beta}\left(y\right)$
for some $\alpha,\beta\in R$. There is no global operation $\left(x,y\right)\mapsto x+y$
defined on $X$, but within each orbit class that contains a unit
element addition of its elements is induced by the addition in $R$,
and multiplication of orbit classes is induced by the multiplication
of elements of $X$.

Quantity spaces are to scalable monoids as vector spaces are to modules.
A quantity space is a \emph{strongly free commutative} scalable monoid
over a \emph{field} $K$. As mentioned, quantities are just elements
of quantity spaces, and dimensions are their orbit classes.

The rest of this article is divided into three parts. Part 1 provides
additional background, Part 2 deals with scalable monoids, and in
Part 3 scalable monoids are specialized to quantity spaces.

\part{From pure numbers to abstract quantities}

Sections 1 and 2 below aim to clarify relations between pure and concrete
numbers, concrete numbers and quantities, and concrete and abstract
quantities. The chain of notions from concrete entities to measures
via concrete and abstract quantities is also considered. Sections
3 and 4 contains a review of previous research on systems of abstract
quantities, leading up to a list of twelve properties of such systems
and three axioms from which these properties can be derived.

\section{\label{sec:1-1}Concrete numbers and quantities}

The numbers defined by Newton were \emph{pure }or \emph{abstract}
numbers, without any designation, but there has always been a practical
need for a notion that addresses questions about how many or how much
\emph{of what}. Reasoning in terms of magnitudes or quantities makes
it possible to answer such questions, but there is an alternative
notion which rose to prominence simultaneously with the demise of
magnitudes \cite{SMIT}, namely \emph{concrete }numbers, that is,
numbers associated with the things being counted or measured. For
example, the Second Part of Euler's \emph{Einleitung zur Rechen-Kunst}
\cite{EUL} deals with such numbers, also known as \emph{benannten
Zahlen,} in some detail.

To understand the relationship between concrete numbers and quantities,
let us consider how one can use them to distinguish, for example,
1 centimeter ($\unit[1]{cm}$) from 1 meter ($\unit[1]{m}$) or 1
gram ($\unit[1]{g}$). For this purpose, we can take advantage of
the fact that quantities are of different kinds. If we interpret $\unit[1]{cm}$,
$\unit[1]{m}$ and $\unit[1]{g}$ as the products $1\cdot cm$, $1\cdot m$
and $1\cdot g$, respectively, where $cm$ and $m$, on the one hand,
and $g$, on the other hand, are quantities of different kinds, then
$1\cdot cm$ and $1\cdot g$ are also quantities of different kinds,
as are $1\cdot m$ and $1\cdot g$, so they are distinct although
the associated pure number 1 is the same in both cases. Note that
we have, for example, $1\cdot cm+2\cdot cm=3\cdot cm$, $1\cdot m+50\cdot cm=1\cdot\left(100\cdot cm\right)+50\cdot cm=100\cdot cm+50\cdot cm=150\cdot cm$
and $\left(1\cdot cm\right)\left(2\cdot g\right)=2\cdot cm\,g$.

However, we can introduce a notion of different\emph{ kinds of numbers}
instead of relying on the fact that quantities are of different kinds.
For example, we may say that $\unit[1]{cm}$, $\unit[1]{m}$ and $1\,\mathsf{g}$
are different kinds of numbers and thus different concrete numbers.
To set this interpretation of $\unit[1]{cm}$, $1\unit{m}$ and $\unit[1]{g}$
apart from the previous one, we may write these expressions as $1_{cm}$,
$1_{m}$ and $1_{g}$, respectively, where the subscript indicates
what kind of number the concrete number is. This interpretation works
smoothly for addition and subtraction of numbers of the same kind;
for example, $1_{cm}+2_{cm}=3_{cm}$. Multiplication and division
of concrete numbers by pure numbers is also straight-forward; for
example, $3\left(2_{m}\right)=6_{m}$ and $\nicefrac{2_{m}}{3}=\left(\nicefrac{2}{3}\right){}_{m}$.
There are also identities such as $m=100\,cm$ and $100_{cm}=\text{1}_{\left(100\,cm\right)}$,
so $\text{1}_{m}+50_{cm}=\text{1}_{\left(100\,cm\right)}+50_{cm}=\text{100}_{cm}+50_{cm}=\text{150}_{cm}$.
Multiplication and division of concrete numbers by concrete numbers
are more complicated operations, however; for example, $\left(3_{cm}\right)\left(2_{g}\right)\!=\!6_{\left(cm\,g\right)}$,
where $cm\,g$ is a new kind of numbers which can be regarded as the
product of the kinds of numbers $cm$ and $g$. Such multiplication
was not considered in Euler's textbook from 1740, and it was apparently
not until the 19th century that products such as $cm\,g$ started
to be used \cite{SMIT}.

If a kind of numbers can be multiplied by a pure number or another
kind of numbers, with the products being kinds of numbers, one gets
the the impression, however, that kinds of numbers are just quantities
in disguise. In fact, to be able to multiply and divide concrete numbers
of any kinds one needs to develop a calculus of kinds of numbers essentially
equivalent to a calculus of quantities, so nothing is gained in terms
of simplicity, and it is conceptually cleaner to separate quantities
from numbers and distinguish different kinds of quantities than to
treat quantities as numbers that can be of different kinds.

\section{\label{sec:1-2}On concrete and abstract quantities}

Recall that while Stevin defined a number as something which is a
measure of a magnitude, Maxwell described a quantity as something
which is measured by a number relative to a unit. Leaving this historical
irony aside, Maxwell's characterization of a quantity as something
which can be specified by a number relative to another quantity of
the same kind is open to two somewhat different interpretations. On
the one hand, the relation $q=\left\{ q\right\} \left[u\right]$,
where $\left\{ q\right\} $ is a number and $q$ and $\left[u\right]$
are quantities of the same kind, can be be seen as analogous to $v=\lambda u$,
where $\lambda$ is a scalar and $v$ and $u$ are vectors. That is,
we can see a quantity as a mathematical object which admits multiplication
by a number. Equipping a system of such mathematical objects with
other operations as needed and assuming that certain identities hold,
we obtain a mathematical system of quantities. We may call quantities
in this sense \emph{mathematical }quantities, or alternatively \emph{abstract
}quantities.

On the other hand, Maxwell \cite{MAXW} talks about the quantity $\left[u\right]$
as a ``standard of reference'' and a ``standard quantity {[}which{]}
is technically called the Unit'', and he discusses concrete standards
of length, time and mass such as the standard \emph{mètre} in Paris.
This points to another possible interpretation of Maxwell's notion
of quantity: a \emph{concrete }quantity\emph{,} or alternatively a\emph{
physical }quantity\emph{,} is an \emph{attribute }of a\emph{ concrete
or physical entity,} such as a concrete object, process or phenomenon,
which can be specified by a number relative to (a corresponding attribute
of) a concrete reference entity. For example, the length $\ell\left(r\right)$
of a rigid rod $r$ can be expressed as a multiple $\mu$ of the length
$\ell\left(\epsilon\right)$ of a measuring rod $\epsilon$; we may
write $\ell\left(r\right)=\mu\cdot\ell\left(\epsilon\right)$. More
generally, if we let $a\left(\epsilon\right)$ denote the attribute
of a concrete standard $\epsilon$ with respect to some aspect $a$
then the attribute $a\left(x\right)$ of a concrete entity $x$ with
respect to $a$ can be expressed as a multiple $\mu$ of $a\left(\epsilon\right)$,
that is, $a\left(x\right)=\mu\cdot a\left(\epsilon\right)$.

It is a wide-spread practice to use the notation of concrete numbers
to refer to concrete quantities. Specifically, we may denote $a\left(\epsilon\right)$
by $\unit[1]{a\left(\epsilon\right)}$ or $\unit[1]{u}$, and $\mu\cdot a\left(\epsilon\right)$
by $\unit[\mu]{a\left(\epsilon\right)}$ or $\unit[\mu]{u}$; with
this notation, we have $\mu\cdot\left(\unit[1]{u}\right)=\unit[\mu]{u}$.
By contrast we shall use italics for mathematical quantities; thus,
the mathematical quantity corresponding to $\unit[\mu]{u}$ can be
denoted by $u$. We conclude that a mathematical quantity \emph{can}
be specified by a number $\mu$ relative to a unit $u$ by an expression
of the form $\mu\cdot u$, whereas a concrete quantity $\unit[\mu]{u}$
is an attribute of a concrete entity that \emph{must} be specified
by a number $\mu$ relative to a concrete standard or unit $\unit[1]{u}$.\footnote{That there are two roads to quantities has led to some confusion and
debate among physicists and metrologists, and to attempts at clarification.
In a seminal contribution 1950, König \cite{KON} pointed out the
ambiguity of the notion of quantity and called the two camps each
favoring one of these interpretations \emph{Synthetiker} and \emph{Realists}.
Distinctions similar to that proposed here between different kinds
of quantities were subsequently introduced by several authors. Some
pairs of notions corresponding to abstract and concrete quantities,
respectively, are ''symbolic'' and ''physical'' quantities \cite{SILS},
''(abstract) physical'' and ''concrete'' quantities \cite{BOER},
''abstract'' and ''measurable''\emph{ }quantities \cite{EM},
and ''magnitudes'' and ''quantities'' \cite{COURT}.} 

Given a system of physical quantities, we require a corresponding
system of mathematical quantities to be an abstract model of the former,
meaning that there has to exist some kind of structural similarity
between the two systems. In particular, as mathematical quantities
of the same kind can be added, it should be possible to add certain
concrete quantities as well. As an example, two rigid rods can be
concatenated, ``glued'' together to form one longer, straight rod,
and we may define the sum of the lengths of the two original rods
to be the length of the new rod. We may similarly ``concatenate''
two metal weights (by putting them in the same pan of a balance scale)
or two processes (in a temporal sense), and define the sum of masses
and times in terms of such concatenation. Specifically, 
\[
a\left(x\right)\dotplus a\left(y\right)=a\left(x\,\&\,y\right),
\]
where $x\,\&\,y$ is the concrete entity obtained by concatenating
$x$ and $y$, $a\left(x\,\&\,y\right)$ is the length, mass or duration
of the concatenated rods, weights or processes, and $a\left(x\right)\dotplus a\left(y\right)$
is the sum of the two concrete quantities characterizing the original
concrete entities. It is clear that addition of concrete quantities
inherits some properties from concatenation of the underlying concrete
entities. In particular, we have $\left(x\,\&\,y\right)\,\&\,z=x\,\&\,\left(y\,\&\,z\right)$
and $x\,\&\,y=y\,\&\,x$, so
\[
\left(a\left(x\right)\dotplus a\left(y\right)\right)\dotplus a\left(z\right)=a\left(x\right)\dotplus\left(a\left(y\right)\dotplus a\left(z\right)\right),\quad a\left(x\right)\dotplus a\left(y\right)=a\left(y\right)\dotplus a\left(x\right),
\]
so addition of mathematical quantities should be defined to be associative
and commutative as well.

It is less straight-forward to construct concrete quantities that
can be interpreted as products or inverses of given concrete quantities,
however, and this is something that critics of modern quantity calculus
have focused on, since it assumes that mathematical quantities can
be multiplied and have inverses. Hence, the structural similarity
of abstract and concrete systems of quantities is called into question. 

In 1922, Bridgman wrote in his well-known \emph{Dimensional Analysis}
\cite{BRID} that ``{[}i{]}t is meaningless to talk of dividing a
length by a time: what we actually do is to operate on numbers that
are measures of these quantities''. Much later, Emerson \cite{EM}
argued that ``{[}i{]}t is impossible to conceive of the distance
between the ends of a meter standard, a unit, being divided by the
half period of a seconds pendulum, another unit''. Velocity may be
a badly chosen example, however. While it is not possible to ``divide''
a measuring rod by a pendulum, so that the technique used above to
define sums of certain quantities does not work in this case, we may,
given a length unit $\unit[1]{u_{\ell}}$ and a time unit $\unit[1]{u_{t}}$,
define a velocity unit $\unit[1]{\left(u_{\ell}\div u_{t}\right)}$
as the mean velocity of a point that travels the distance $\unit[1]{u_{\ell}}$
during the time $\unit[1]{u_{t}}$. We can also define an area unit
$\unit[1]{\left(u_{\ell}\divideontimes u_{\ell}\right)}$ as the area
of a square with sides of length $\unit[1]{u_{\ell}}$. Thus, $\unit[1]{\left(u_{\ell}\div u_{t}\right)}$
is an attribute of a movement, and $\unit[1]{\left(u_{\ell}\divideontimes u_{\ell}\right)}$
is an attribute of a square. While the operations $\divideontimes$
and $\div$ in a system of concrete quantities are not identical to
multiplication and division, respectively, in an abstract system of
quantities, there is a correspondence $\phi$ between the two systems
such that $\phi\left(\unit[1]{\left(u_{\ell}\divideontimes u_{\ell}\right)}\right)=\phi\left(\unit[1]{u_{\ell}}\right)\phi\left(\unit[1]{u_{\ell}}\right)=u_{\ell}u_{\ell}$
and $\phi\left(\unit[1]{\left(u_{\ell}\div u_{t}\right)}\right)=\phi\left(\unit[1]{u_{\ell}}\right)\phi\left(\unit[1]{u_{t}}\right)^{-1}=u_{\ell}u_{t}^{-1}$,
so a structural similarity exists.

The critics may be on to something nevertheless. It is not clear,
for example, what kind of physical entity a physical time unit multiplied
by itself, or a concrete mass unit multiplied by itself, would be
an attribute of. More generally, it is not clear that any two concrete
quantities can be combined in a way that corresponds to multiplication
of abstract quantities, and the notion of the inverse of a concrete
quantity is also not clear. Thus, the correspondence $\phi$ may be
a partial function only. On the other hand, a partial function may
establish a sufficient structural similarity between systems of physical
and mathematical quantities.

We will have to leave this difficulty unresolved and unexplored here,
though; a full discussion would require another article (and, I believe,
another author). Instead, we will try to gain a deeper understanding
of the situation by considering an analogy with a more familiar case.

Recall that abstract vectors may be represented by coordinates relative
to some basis. On the other hand, abstract vectors may represent different
kinds of less abstract objects, such as geometrical vectors, characterized
by their direction and extension. (As we know, ``vectors'' originally
meant geometrical vectors.) Furthermore, geometrical vectors may themselves
be seen as abstract representations of more concrete geometrical phenomena
such as translations or oriented line segments. These facts can be
visualized as shown below:
\noindent \begin{center}
\emph{\small{}Concrete geometrical entity $\rightarrow$ Geometrical
vector $\rightarrow$ Abstract vector $\rightarrow$ Coordinates}
\par\end{center}{\small \par}

For example, the set $T$ of all translations in a Euclidean space
is an abelian group under composition of transformations. Thus, the
corresponding set $G$ of geometrical vectors is an abelian group
under the operation inherited from the set of translations, so $G$
is isomorphic to some abstract vector space $V$ as an abelian group.
A scalar product $\left(\lambda,x\right)\mapsto\lambda\cdot x$ can
also be defined on $T$, making $G$ with this scalar product inherited
from $T$ a vector space isomorphic to $V$.

On the other hand, if we let the concrete geometrical entities be
oriented line segments then the ``sum'' of $\overrightarrow{AB}$
and $\overrightarrow{CD}$ is defined and equal to $\overrightarrow{AD}$
only if $B=C$, so while each oriented line segment can be interpreted
as a geometrical vector, the set of all oriented line segments does
not form an abelian group of geometrical vectors. Every oriented line
segment $\overrightarrow{AB}$ can be associated with the unique translation
that moves $A$ to $B$, however, so there is still a certain structural
similarity between the system of oriented line segments and the abelian
group $G$ of geometrical vectors corresponding to the set of translations
$T$. Relations that connect concrete geometrical entities to geometrical
vectors need not be strict isomorphisms, though, and this applies
to relations between physical entities and physical quantities, too.

Returning to the quantities, we have seen that we can distinguish
four levels of notions, corresponding to those in our vector space
analogy, as shown below:
\noindent \begin{center}
\emph{\small{}Concrete entity $\rightarrow$ Concrete quantity $\rightarrow$
Abstract quantity $\rightarrow$ Measure}
\par\end{center}{\small \par}

Although the arrows between the three left-most notions possibly hide
not fully resolved complications, this simple scheme helps to clarify
some central conceptual issues concerning quantities. It would seem
that leaving out either concrete quantities or abstract quantities,
using instead one of the conceptual schemes 
\noindent \begin{center}
\emph{\small{}Concrete entity $\rightarrow$ Concrete quantity $\rightarrow$
Measure,}\\
\emph{\small{}Concrete entity $\rightarrow$ Abstract quantity $\rightarrow$
Measure, }
\par\end{center}{\small \par}

\noindent or, even worse, mixing the two schemes, leads to a conceptually
less satisfying theory, if not conceptual confusion \cite{SILS}.
\begin{example}
Many-to-one relations between concrete and abstract quantities.
\end{example}
The definition of planar angle quantities has been subject to much
debate in the metrological community \cite{BOER2}: should a planar
angle be treated as a derived quantity with unit quantity $\unitfrac[1]{m}{m}$,
or should it be treated as a base quantity with unit quantity $\unit[1]{rad}$
(radian), where the radian is the angle subtended at the center of
a circle by an arc that is equal in length to the radius of the circle?
That depends on whether we treat a planar angle as a physical or a
mathematical quantity. As a physical quantity, a planar angle $\unit[\theta]{rad}$
is one expressed by a number and a physical unit quantity, a planar
angle given by the geometrical construction referred to. However,
$\unitfrac[\theta]{m}{m}$ is not an attribute of any particular physical
entity; in this context, $\unitfrac[\theta]{m}{m}$ is the physical
quantity $\unit[\theta]{rad}$ treated as a mathematical quantity,
and therefore it should be written as $\theta\cdot\nicefrac{m}{m}$
or $\theta\cdot\boldsymbol{1}$, meaning that a planar angle as a
mathematical quantity is a so-called dimensionless quantity. 

Similar considerations apply to many other quantities. For example,
there has been a parallel debate about whether a solid angle should
be treated as a derived quantity with unit quantity $\unitfrac[1]{m^{2}}{m^{2}}$,
or as a base quantity with unit quantity $\unit[1]{sr}$ (steradian).
Again, as a physical quantity, a solid angle $\unit[\theta]{sr}$
is one expressed by a number and a certain physical solid angle unit,
while as a mathematical quantity a solid angle is again a dimensionless
quantity $\theta\cdot\nicefrac{m^{2}}{m^{2}}=\theta\cdot\boldsymbol{1}$. 

Thus, regarded as physical quantities, planar and solid angles are
quantities of different kinds; but their corresponding mathematical
quantities are of the same kind. This shows that a correspondence
between concrete and abstract quantities may be a many-to-one relation.
\begin{example}
Quantities and quantity vectors.
\end{example}
Many ``quantities'' in physics are vector quantities or, more correctly,
\emph{quantity vectors}; they are geometrical vectors that combine
a direction and an extent given by a quantity rather than a number. 

For example, a displacement $D$ combines a direction with a length
$\unit[\ell]{m}$ that gives the extension of the displacement, while
a force $F$ combines a direction with a quantity $\unit[f]{N}$ that
gives the extent (strength) of the force. The \emph{work} generated
by $F$ and $D$ is defined as the \emph{quantity $FD=\unit[f\ell\cos\theta]{N\,m}$,
}where $\theta$ is the oriented angle between $F$ and $D$.

On the other hand, we can define a \emph{quantity vector} $F\times D$
with the same direction as that of the cross product of $F$ and $D$
and extent $f\unit[\ell\sin\theta]{N\,m}$; this is the \emph{torque}
created by $F$ and $D$.\footnote{The torque can also be defined as the geometrical bivector $F\wedge D$
whose extent is $f\unit[\ell\sin\theta]{N\,m}$.} 

Note that $\unit[1]{N\,m}$ is a unit for the physical quantity $FD=\unit[f\ell\cos\theta]{N\,m}$
as well as the physical quantity $f\unit[\ell\sin\theta]{N\,m}$ associated
with the quantity vector $F\times D$, so these physical quantities
are of the same kind. Thus, although work and torque are often said
to illustrate a situation where ''quantities having the same quantity
dimension are not necessarily of the same kind'' \cite{JCGM}, they
do not exemplify physical quantities of different kinds corresponding
to mathematical quantities of the same kind, at least if we accept
that the torque is a quantity vector rather than a vector quantity.
\begin{example}
Fractional powers of quantities.

In quantity equations we sometimes encounter fractional powers of
quantities. For example, the Gaussian unit of electrical charge $\unit[1]{statC}$
is usually defined by 
\begin{equation}
\unit[1]{statC}=1\frac{\unit{cm}^{3/2}\,\unit{g}^{1/2}}{\unit{s}}.\label{eq:ex1}
\end{equation}
Do we thus have to admit fractional powers of concrete and abstract
quantities into the theory of quantities? Several authors do so, but
this complicates the interpretation of concrete quantities and the
definition of abstract quantities. Fortunately, integral powers of
quantities suffice. 

Let $c$, $g$, $\ell$ and $t$ be the abstract quantities corresponding
to the concrete quantities $\unit[1]{statC}$, $\unit[1]{g}$, $\unit[1]{cm}$
and $\unit[1]{s}$, respectively. Suppose that we have
\begin{equation}
c^{2}=\frac{\ell^{3}g}{t^{2}},\label{eq:ex2}
\end{equation}
where fractional powers of quantities do not occur. If the measures
of these quantities are positive, we can write this as
\begin{equation}
\kappa=\frac{\lambda^{3/2}\,\gamma{}^{1/2}}{\tau},\label{eq:ex3}
\end{equation}
where $\kappa$, $\lambda$, $\gamma$ and $\tau$ are the measures
of $c$, $\ell$, $g$ and $t$, respectively, relative to some coherent
system of units, so we have effectively recovered (\ref{eq:ex1}).
The crucial notion here is that the relation between the concrete
quantities $\unit[1]{statC},$ $\unit[1]{cm},$ $\unit[1]{g}$ and
$\unit[1]{s}$ is described by the identity (\ref{eq:ex2}) relating
the corresponding abstract quantities; (\ref{eq:ex1}) does not describe
this relation directly. This example suggests that there is no need
for fractional powers of quantities in a theory where a distinction
is made between concrete and abstract quantities.
\end{example}

\section{\label{sec:1-3}Algebraic properties of systems of abstract quantities}

Some years after Maxwell had called attention to the notion of quantities,
Lodge published a note on ``the multiplication and division of concrete
quantities'' \cite{LOD}. In retrospect, Lodge's note hints at some
ideas that reappear in modern quantity calculus. Using motivating
examples such as 
\[
\frac{\unit[4\frac{1}{2}]{miles}}{\unit[1]{hour}}\times\unit[40]{minutes}=\frac{\unit[4\frac{1}{2}]{miles}}{3}\times2=\unit[3]{miles}
\]
and 
\[
\unit[4]{feet}\times\unit[2]{yards}=8\times\unit[1]{foot}\times\unit[1]{yard}=24\times\unit[1]{foot}\times\unit[1]{foot}=\unit[24]{square\:feet},
\]
Lodge assumes that concrete quantities can be multiplied and divided
by numbers and by other concrete quantities, and sets out to investigate
the formal properties of these operations. Although Lodge deals with
concrete quantities of the form $\unit[\alpha]{u}$, he takes a step
towards a more abstract notion of quantities in this investigation
by denoting such concrete quantities by single symbols such as $u$.
A product of a concrete quantity by a number can then be written as
$\lambda\cdot u$, and products or ratios of concrete quantities can
be written as $uv$ or $u/v$.

In the second example above, it is assumed that multiplication of
quantities by numbers is associative in the sense that $\alpha\cdot\left(\beta\cdot\left(\unit[1]{u}\right)\right)=\alpha\beta\cdot\left(\unit[1]{u}\right)$
for any numbers $\alpha,\beta$ and any quantity $\unit[1]{u}$. Hence,
$\alpha\cdot\unit[\beta]{u}=\unit[\alpha\beta]{u}$ since $\gamma\cdot\left(\unit[1]{u}\right)=\unit[\gamma]{u}$,
so 
\begin{gather}
1\cdot u=1\cdot\unit[\alpha]{u}=\unit[\alpha]{u}=u,\label{eq:1a}\\
\alpha\cdot\left(\beta\cdot u\right)=\alpha\cdot\left(\beta\cdot\unit[\gamma]{u}\right)=\alpha\cdot\left(\unit[\beta\gamma]{u}\right)=\unit[\alpha\beta\gamma]{u}=\alpha\beta\cdot\left(\unit[\gamma]{u}\right)=\alpha\beta\cdot u,\label{eq:1b}
\end{gather}
although these identities are not stated explicitly. 

Regarding multiplication and division of two quantities $\alpha\cdot a$
and $\beta\cdot b$, Lodge is more explicit and argues that these
operations have to obey the laws
\begin{gather}
\left(\alpha\cdot a\right)\left(\beta\cdot b\right)=\alpha\beta\cdot ab,\label{eq:1}\\
\left(\alpha\cdot a\right)/\left(\beta\cdot b\right)=\left(\alpha/\beta\right)\cdot\left(a/b\right),\label{eq:2}
\end{gather}
presuming that $a/b$ and $\alpha/\beta$ exist. It is worth noting
that (\ref{eq:1}) is equivalent to
\begin{equation}
\lambda\cdot ab=\left(\lambda\cdot a\right)b=a\left(\lambda\cdot b\right);\label{eq:3}
\end{equation}
if (\ref{eq:1a}) and (\ref{eq:1}) then trivially (\ref{eq:3}),
and if (\ref{eq:1b}) and (\ref{eq:3}) then (\ref{eq:1}), as shown
in the proof of Lemma \ref{thm:lem1}.

Lodge suggested that ''{[}quantities{]} might be treated exactly
as numbers''; this may be taken to mean that concrete quantities
should be treated as mathematical quantities \textendash{} for example,
one should write $\frac{\unit[3]{feet}}{\unit[1]{second}}$ rather
than $\unit[3]{feet\:per\:second}$. Note, though, that this is a
heuristic principle rather than a rigorous theory: physical quantities
are not mathematical quantities, and mathematical quantities are not
numbers. 

After Lodge, no significant progress was made in the development of
a rigorous theory of mathematical quantities for quite some time.
In 1945, Landolt \cite{LAND}, supposedly encouraged by Wallot's promotion
of quantity calculus in \emph{Handbuch der Physik} \cite{WAL} and
inspired by the abstract ``modern algebra'' expounded in van der
Waerden's classical textbook \cite{WAERD}, called attention to group
operations on systems of quantities. Specifically, he pointed out
that the invertible quantities form a group under \emph{``qualitative
Verknüpfung''}, that is, multiplication of quantities as discussed
by Lodge, and that quantities of the same kind form a group under
\emph{``intensive Verknüpfung''}, that is, addition of quantities.

After Landolt, but apparently independent of him, Fleischmann published
an important article focusing on multiplication of \emph{kinds of
quantities} (\emph{Grössen\-arten}). In modern terminology, Fleischmann's
idea is that a congruence relation $\sim$ can be defined on a given
set of quantities, so that the kinds of quantities correspond to the
equivalence classes $\left[q\right]$, and multiplication of kinds
of quantities can be defined by $\left[p\right]\left[q\right]=\left[pq\right]$.

Fleischmann states seven\emph{ Grundaussagen} about kinds of quantities.
First, there are four assertions to the effect that they constitute
a group. Then it is asserted that this group is abelian, torsion-free
and finitely generated. These assertions imply that the kinds of quantities
form a finitely generated free abelian group, so there exists a finite
basis for this group. This means that if $\left\{ B_{1},\ldots,B_{n}\right\} $
is such a basis then any kind of quantity $Q$ has a unique representation
of the form $Q=\prod_{i=1}^{n}B_{i}^{k_{i}}$, where $k_{i}$ are
integers.

In an Appendix to his survey from 1995, de Boer \cite{BOER} presented
a list of properties of systems of abstract quantities, based on previous
work on quantity calculus and meant to be exhaustive. The list of
properties below is, in turn, based on de Boers list, but there are
also elements from other sources, such as the works by Lodge, Landolt
and Fleischmann surveyed in this section. 
\begin{enumerate}
\item[(P1)] A quantity $q$ can be multiplied by a number $\lambda$, and we
have the identities $1\cdot q=q$ and $\alpha\cdot\left(\beta\cdot q\right)=\left(\alpha\beta\right)\cdot q$.
\item[(P2)] A set of quantities that forms a system of abstract quantities can
be partitioned into subsets of quantities of the same kind.
\item[(P3)] $\lambda\cdot q$ is a quantity of the same kind as $q$.
\item[(P4)] Quantities of the same kind can be added and form an abelian group
under addition.
\item[(P5)] If $p$ and $q$ are of the same kind then $\lambda\cdot\left(p+q\right)=\lambda\cdot p+\lambda\cdot q$
and $\left(\alpha+\beta\right)\cdot q=\alpha\cdot q+\beta\cdot q$.
\item[(P6)] A quantity can be multiplied by a quantity, and the non-zero quantities
form an abelian group under multiplication. 
\item[(P7)] Multiplication of quantities by numbers and by quantities are related
by the identities $\lambda\cdot\left(pq\right)=\left(\lambda\cdot p\right)q$
and $\lambda\cdot\left(pq\right)=p\left(\lambda\cdot q\right)$.
\item[(P8)] Kinds of quantities can be multiplied, forming a finitely generated
free abelian group under multiplication. 
\item[(P9)] If $q$ is a quantity of kind $K$ and $q'$ a quantity of kind $K'$
then $qq'$ is a quantity of kind $KK'$.
\item[(P10)] If $q$ is a quantity and $r,s$ are quantities of the same kind
then $q\left(r+s\right)=qr+qs$ (and hence $\left(r+s\right)q=rq+sq$).
\item[(P11)] For every kind of quantities there is a quantity $u$ such that for
every quantity $q$ of this kind we have $q=\mu\cdot u$, where $\mu$
is a uniquely determined number. Such a quantity $u$ is called a
\emph{unit}.
\item[(P12)] It is possible to select exactly one unit from each kind of quantities
in such a way that that if $u$ is the unit of kind $K$ and $u'$
is the unit of kind $K'$ then $uu'$ is the unit of kind $KK'$.
A set of units satisfying this condition is said to be \emph{coherent}.
A coherent set of units is a finitely generated free abelian group,
isomorphic to the corresponding group of kinds of quantities.
\end{enumerate}
These properties have many implications. For example, as a coherent
set $U$ of units is a finitely generated free abelian group under
multiplication, there is a basis $\left\{ u_{1},\ldots,u_{n}\right\} $
for $U$ such that each unit $u$ in $U$ has a unique representation
of the form $u=\prod_{i=1}^{n}u_{i}^{k_{i}}$, where $k_{i}$ are
integers. The units $u_{1},\ldots,u_{n}$ are called ``base units'';
other units are called ``derived units''. In addition, there exists
a quantity $\boldsymbol{1}$ such that $\boldsymbol{1}q\!=\!q\boldsymbol{1}=\!q$
for any quantity $q$, there exists a kind of quantities $I$ such
that $IK\!=\!KI=\!K$ for any kind of quantity $K$, $\boldsymbol{1}$
is of kind $I$, and there is a canonical bijection $\lambda\mapsto\lambda\cdot\boldsymbol{1}$
between the set of numbers and the quantities of kind $I$. 

Note that although (P10) is not mentioned by Lodge, Landolt, Fleischmann
or de Boer, it is postulated by Quade \cite{QUAD} and Raposo \cite{RAP}
and derived from other properties by Whitney \cite{WHIT}. Conversely,
though, (P1) \textendash{} (P12) include or imply all items in de
Boer's list.

\section{\label{sec:1-4}Systems of abstract quantities from first principles}

Given an exhaustive list of properties of systems of mathematical
quantities, it remains to design some kind of mathematical foundation
from which these properties can be derived, a process analogous to
Newton's (ostentative) derivation of Kepler's laws from first principles
or Euclid's axiomatization of Greek geometry. In reality, description
and derivation were intertwined in the development of quantity calculus,
but for purposes of exposition I have somewhat artificially separated
these processes, focusing on the former in the preceding section and
the latter here. We can let the (P1) \textendash{} (P12) be the properties
of quantity systems that we want to derive.

Abstract quantities will be referred to simply as quantities below
since only abstract quantities will be considered henceforth.

Carlson \cite{CARL} defines a quantity as a pair $\left(r,\xi\right)$,
where $r$ is a real number and $\xi$ a \emph{pre-unit}, an element
of a multiplicatively written vector space over the rational numbers.
Multiplication of quantities by real numbers (i) is defined by setting
$a\cdot\left(r,\xi\right)=\left(ar,\xi\right)$, multiplication of
quantities (ii) is defined by setting $\left(r,\xi\right)\left(s,\eta\right)=\left(rs,\xi\eta\right)$
and fractional powers of quantities (iii) are defined by setting $\left(r,\xi\right)^{m/n}=\left(\left(r^{m}\right)^{1/n},\xi^{m/n}\right)$
if a (unique) positive real \emph{n}th root $\left(r^{m}\right)^{1/n}$
of $r^{m}$ exists.

Recall that Fleischmann's \emph{Grundaussagen} imply that there exists
a finite basis $\left\{ B_{1},\ldots,B_{n}\right\} $ for the group
of kinds of quantities, meaning that every kind of quantity $Q$ has
a unique representation of the form $Q=\prod_{i=1}^{n}B_{i}^{k_{i}}$,
where $k_{i}$ are integers. It similarly follows from axioms given
by Carlson that for every system of quantities there exists a\emph{
fundamental system of units} $\left\{ u_{1},\ldots,u_{n}\right\} $,
where $u_{i}^{t}$ is defined for each $u_{i}$ and every rational
number $t$, such that every quantity $q$ has a unique expansion
$q=\mu\cdot\prod_{i=1}^{n}u_{i}^{t_{i}}$, where $t_{i}\in\mathbb{Q}$
is said to be a \emph{dimension} of $q$. A fundamental system of
units is analogous to a basis for a vector space, and is not unique.
Carlson points out, however, that if the dimensions of two quantities
agree relative to one fundamental system of units, then they agree
relative to all fundamental systems of units, and he defines \emph{quantities
of the same kind} as quantities whose dimensions agree relative to
some fundamental system of units, although he does not consider addition
and subtraction of quantities of the same kind.

The definitions proposed by Drobot \cite{DROB} and Whitney \cite{WHIT}
are both based on the ideas that the set of quantities \textendash{}
rather than a set of pre-units \textendash{} is a multiplicatively
written vector space (over $\mathbb{R}$ for Drobot, $\mathbb{Q}$
or $\mathbb{R}$ for Whitney), and that this vector space contains
a set $\mathcal{R}$ of numbers, so that multiplication of type (i)
is reduced to multiplication of type (ii). The exponentiation operation
(iii) is present in both authors' definitions; Drobot lets $\mathcal{R}$
be the set of positive real numbers so that $r^{t}$ is always defined,
but Whitney lets $\mathcal{R}$ be all rational or real numbers and
$r^{t}$ be undefined for non-positive numbers $r$, so in his theory
the set of quantities is not quite a multiplicatively written vector
space but something more complicated.

Both Drobot and Whitney define, in slightly different ways, sets of
quantities of the same kind, called ``dimensions'' by Drobot and
``birays'' by Whitney,\footnote{We note that the term ''dimension'' is used to refer to a pattern
of exponents of numbers by Fourier \cite{FOUR}, an exponent of a
unit by Carlson \cite{CARL}, a set of quantities by Drobot \cite{DROB},
and an object associated with a set of quantities by Raposo \cite{RAP}.
In this article, a dimension will be defined as a set of quantities.} and both define addition of quantities $x=\alpha u$ and $y=\beta u$
of the same kind by the identity $x+y=\left(\alpha+\beta\right)u$.
Letting $\left[q\right]$ denote the dimension/biray containing $q$,
both authors define multiplication of dimensions/birays by the identity
$\left[x\right]\left[y\right]=\left[xy\right]$, and exponentiation
by $\left[x\right]^{t}=\left[x^{t}\right]$. Whitney also proves that
$q\left(r+s\right)=qr+qs$ for for any quantities $q,r,s$ such that
$\left[r\right]=\left[s\right]$.

Drobot's article \cite{DROB} appeared in 1953, Whitney's \cite{WHIT}
in 1968, and Whitney\linebreak{}
acknowledged that ``{[}t{]}he essential features will be found in
Drobot'', but Drobot failed to justify some important definitions
and Whitney's theory is more well-developed. Both theories have peculiar
features related to the overloading of numbers and quantities, however.
In Whitney's theory, the number 0 belongs to all birays; note that
$0x=0$ for any quantity $x$, so this means that there exists a unique
zero quantity which can be regarded as a quantity of all kinds. In
Drobot's theory, there are only ''positive'' quantities, so a bigger
quantity cannot be subtracted from a smaller one, and there is no
zero quantity.

Quade \cite{QUAD} defines systems of quantities by means of a rather
cumbersome construction with \emph{one-dimensional vector spaces}
as building blocks. As a first step, he defines a quantity system
as the union of all vector spaces in a countably infinite set of pairwise
disjoint vector spaces over a fixed field of real or complex numbers;
the elements of these vector spaces, called \emph{V-elements}, are
in effect quantities. Quantities are of the same kind if and only
if they belong to the same vector space; naturally, only quantities
in the same vector space can be added. Quade also assumes that the
set of quantities can be multiplied and form an abelian semigroup,
that for any two one-dimensional vector spaces $U$ and $V$, the
set of products of quantities $uv$, where $u\in U$ and $v\in V$,
is itself a one-dimensional vector space, denoted $UV$, so that the
set of vector spaces is an abelian semigroup under this multiplication.
Furthermore, we have $\lambda\left(uv\right)=\left(\lambda u\right)v$,
where $u,v$ are vectors and $\lambda$ a scalar, and if $u$ and
$v$ are non-zero quantities then $uv$ is a non-zero quantity.

This scheme does not accommodate inverses of quantities and general
division of quantities, however, so Quade embeds the multiplicative
semigroup of quantities into a group of $V$-element fractions, redefining
$V$-elements as such fractions. The set of all generalized $V$-elements
thus obtained can be partitioned into vector spaces, corresponding
to kinds of generalized quantities. The set of these vector spaces
is again an abelian group, now a finitely generated free abelian group.

It is worth noting that Quade introduces a relation $R_{1}$ such
that $xR_{1}y$ if and only if $\alpha x=\beta y$ for some numbers
$\alpha,\beta$, not both zero. $R_{1}$ is similar to the relation
$\sim$ defined in Section \ref{subsec:13} below, but the definitions
are not identical and the relations are used in different ways. 

Recently, Raposo \cite{RAP} has proposed a definition of a system
or ''space'' of quantities which is similar to Quade's but more
concise and elegant. By this definition, a space of quantities $Q$
is an algebraic fiber bundle, with fibers of quantities attached to
dimensions (kinds of quantities) in a base space assumed to be a finitely
generated free abelian group. Each fiber is again a one-dimensional
vector space, with scalar product $\lambda\cdot q$. Multiplication
of quantities and multiplication of dimensions are defined independently,
but are assumed to be compatible in the same sense as for Quade. The
quantities constitute an abelian monoid, and it is assumed that $q(r+s)=qr+qs$
for any quantities $q$ and $r,s$ of the same kind, and that $\lambda(qr)=(\lambda q)r$
for any scalar $\lambda$ and quantity $q$. Although this is technically
not part of the definition of a space of quantities, Raposo also assumes
that if $q$ and $r$ are non-zero quantities then $qr$ is a non-zero
quantity. 

Looking back, we can discern three main approaches to the definition
of quantity systems. Carlson uses a two-component approach: a quantity
is a pair $\left(r,\xi\right)$, where $r$ is a real number and $\xi$
a pre-unit, and operations on quantities are defined in terms of operations
on their components. Quade and Raposo use a fiber-bundle approach.
In Quade's conceptualization, described in fiber-bundle terms, each
element of the base space is identified with the corresponding fiber
and is a one-dimensional vector space whose elements are quantities;
in Raposo's version, the elements of the base space are dimensions
with attached fibers which are again one-dimensional vector spaces
whose elements are quantities. In Drobot's and Whitney's approach,
elements of a subspace $\mathcal{R}$ of a multiplicatively written
vector space $Q$ are identified with numbers, and elements of $Q\setminus\mathcal{R}$
are interpreted as quantities in a narrow sense.

In the three main approaches sketched above, properties of systems
of quantities are derived from somewhat contrived constructions, supplemented
by postulates. Most of the properties of systems of quantities listed
in Section \ref{sec:1-3} can be derived from these definitions, but
the tedious details will not be given here \textendash{} let us look
instead at the definition of quantity spaces proposed in this article.
It turns out, surprisingly, that the identities listed below, corresponding
to items (P1), (P7) and a modified version of (P8) that takes clues
about the definition of a basis from both Fleischmann and Carlson,
suffice to derive (P1) \textendash{} (P12). In other words, a quantity
space may be informally defined as a set of elements, called quantities,
satisfying (A1), (A2) and (A3).
\begin{enumerate}
\item[(A1)]  A quantity can be multiplied by a number, and we have the identities
$1\cdot q=q$ and $\alpha\cdot\left(\beta\cdot q\right)=\left(\alpha\beta\right)\cdot q$.
\item[(A2)]  A quantity can be multiplied by a quantity, and the quantities form
a commutative monoid with a finite quantity space basis, that is,
a set of quantities $B=\left\{ b_{1},\ldots,b_{n}\right\} $, where
each $b_{i}$ has an inverse $b_{i}^{-1}$, such that each quantity
has a unique expansion $q=\mu\cdot\prod_{i=1}^{n}b_{i}^{k_{i}}$,
where $k_{i}$ are integers.
\item[(A3)]  Multiplication of quantities by numbers and by quantities are related
by the identities $\lambda\cdot\left(pq\right)=\left(\lambda\cdot p\right)q=p\left(\lambda\cdot q\right)$.
\end{enumerate}
It is shown in Parts 2 and 3 how these axioms imply (P1) \textendash{}
(P12) and other algebraic properties of (finitely generated) quantity
spaces. 

\part{Scalable monoids}

Simply stated, a scalable monoid is a monoid whose elements can be
multiplied by numbers and where the identities in (A1) and (A3) hold.
Scalable monoids are formally defined and compared to rings and modules
in Section \ref{s11}, and some basic facts about them are presented
in Section \ref{s12}.

Section \ref{sec:2-7} is concerned with congruences on scalable monoids
and related notions such as orbit classes and other congruence classes,
homomorphisms and quotient algebras. In Section \ref{s16}, direct
and tensor products of scalable monoids are defined. 

Orbit classes and scalable monoids that contain unit elements are
investigated in Sections \ref{sec:2-9} \textendash{} \ref{sec:s11};
addition of elements in the same orbit class is defined, and additive
scalable monoids, ordered scalable monoids and coherent sets of unit
elements are discussed.

A construction-definition of a scalable monoid with a coherent set
of units, similar to that proposed by Carlson \cite{CARL}, is presented
in Section \ref{subsec:19}.

\section{\label{s11}Mathematical background and main definition}

A unital associative algebra $X$ over a (unital, associative) ring
$R$ is equipped with three kinds of operations on $X$: 
\begin{enumerate}
\item \emph{addition} of elements of $X$, a binary operation $+:\left(x,y\right)\mapsto x+y$
on $X$ such that $X$ equipped with $+$ is an abelian group; 
\item \emph{multiplication} of elements of $X$, a binary operation $\ast:\left(x,y\right)\mapsto xy$
on $X$ such that $X$ equipped with $\ast$ is a monoid; 
\item \emph{scalar multiplication} of elements of $X$ by elements of $R,$
a monoid action $\left(\alpha,x\right)\mapsto\alpha\cdot x$ where
the multiplicative monoid of $R$ acts on $X$ so that $1\cdot x=x$
and $\alpha\cdot\left(\beta\cdot x\right)=\alpha\beta\cdot x$ for
all $\alpha,\beta\in R$ and $x\in X$. 
\end{enumerate}
These structures are linked pairwise: 
\begin{enumerate}
\item[(a)] addition and multiplication are linked by the distributive laws $x\left(y+z\right)=xy+xz$
and $\left(x+y\right)z=xz+yz$; 
\item[(b)] addition and scalar multiplication are linked by the distributive
laws\linebreak{}
 $\alpha\cdot\left(x+y\right)=\alpha\cdot x+\alpha\cdot y$ and $\left(\alpha+\beta\right)\cdot x=\alpha\cdot x+\beta\cdot x$; 
\item[(c)] multiplication and scalar multiplication are linked by the bilinearity
laws $\alpha\cdot xy=\left(\alpha\cdot x\right)y$ and $\alpha\cdot xy=x\left(\alpha\cdot y\right)$
\cite{MACLB}. 
\end{enumerate}
Related algebraic structures can be obtained from unital associative
algebras by removing one of the three operations and hence the links
between the removed operation and the two others. Two cases are very
familiar. A\emph{ ring} has only addition and multiplication of elements
of $X$, linked as described in (a). A\emph{ module} has only addition
of elements of $X$ and scalar multiplication of elements of $X$
by elements of $R$, linked as described in (b). The question arises
whether it would be meaningful and useful to define an ``algebra
without an additive group'', with only multiplication of elements
of $X$ and scalar multiplication of elements of $X$ by elements
of $R$, linked as described in (c). 

It would indeed. It turns out that this notion, a ''sibling'' of
rings and modules, referred to as scalable monoids in this article,
makes sense mathematically and is remarkably well suited for modeling
systems of quantities. While (non-extended) numbers are elements of
rings, specifically fields, quantities are elements of scalable monoids,
specifically quantity spaces.
\begin{defn}
\label{thm:def1}Let $R$ be a (unital, associative) ring. A \emph{scalable
monoid} \emph{over} $R$, or $R$\emph{-scaloid}, is a monoid $X$
equipped with a \emph{scaling action} 
\[
\cdot:R\times X\rightarrow X,\qquad\left(\alpha,x\right)\mapsto\alpha\cdot x,
\]
such that for any $\alpha,\beta\in R$ and $x,y\in X$ we have
\begin{enumerate}
\item $1\cdot x=x$, 
\item $\alpha\cdot\left(\beta\cdot x\right)=\alpha\beta\cdot x$, 
\item $\alpha\cdot xy=\left(\alpha\cdot x\right)y=x\left(\alpha\cdot y\right)$. 
\end{enumerate}
\end{defn}
We denote the identity element of $X$ by $1_{\!X}$ or $\mathbf{1}$,
and set $x^{0}=\boldsymbol{1}$ for any $x\in X$. An \emph{invertible}
element of a scalable monoid $X$ is an element $x\in X$ that has
a (necessarily unique) \emph{inverse} $x^{-1}\in X$ such that $xx^{-1}=x^{-1}x=\boldsymbol{1}$.

\section{\label{s12}Some basic facts about scalable monoids}

The following lemma will be used repeatedly.
\begin{lem}
\label{thm:lem1}Let $X$ be a scalable monoid over $R$. For any
$x,y\in X$ and $\alpha,\beta\in R$ we have $\left(\alpha\cdot x\right)\left(\beta\cdot y\right)=\alpha\beta\cdot xy$
and $\alpha\beta\cdot x=\alpha\cdot\left(\beta\cdot x\right)=\beta\cdot\left(\alpha\cdot x\right)=\beta\alpha\cdot x$.
\end{lem}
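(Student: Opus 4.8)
The plan is to establish the product identity $(\alpha\cdot x)(\beta\cdot y)=\alpha\beta\cdot xy$ first and then extract the scalar-commutation identity from it as a corollary. For the product identity I would simply chain the three defining conditions of Definition \ref{thm:def1}. Starting from $(\alpha\cdot x)(\beta\cdot y)$, condition (3) lets me pull the scalar $\alpha$ out of the product to get $\alpha\cdot\bigl(x(\beta\cdot y)\bigr)$; a second use of (3) pulls $\beta$ out to give $\alpha\cdot\bigl(\beta\cdot(xy)\bigr)$; finally condition (2) collapses the iterated scaling to $\alpha\beta\cdot xy$. This settles the first assertion with no subtlety, and it is exactly the implication ``if \eqref{eq:1b} and \eqref{eq:3} then \eqref{eq:1}'' promised in Section \ref{sec:1-3}.

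For the second assertion, the two outer equalities $\alpha\beta\cdot x=\alpha\cdot(\beta\cdot x)$ and $\beta\cdot(\alpha\cdot x)=\beta\alpha\cdot x$ are immediate from condition (2). The genuine content is the middle equality $\alpha\cdot(\beta\cdot x)=\beta\cdot(\alpha\cdot x)$, equivalently $\alpha\beta\cdot x=\beta\alpha\cdot x$. Since $R$ is assumed only unital and associative, not commutative, this cannot be read off from the ring; it has to be forced by the monoid structure. My device is to evaluate one carefully chosen product in two different ways. Taking $y=\mathbf{1}$ in the first identity gives $(\alpha\cdot x)(\beta\cdot\mathbf{1})=\alpha\beta\cdot(x\mathbf{1})=\alpha\beta\cdot x=\alpha\cdot(\beta\cdot x)$. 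On the other hand, applying condition (3) to pull $\beta$ out of the \emph{right} factor first yields $(\alpha\cdot x)(\beta\cdot\mathbf{1})=\beta\cdot\bigl((\alpha\cdot x)\mathbf{1}\bigr)=\beta\cdot(\alpha\cdot x)$. Equating the two expressions gives precisely $\alpha\cdot(\beta\cdot x)=\beta\cdot(\alpha\cdot x)$, which is what remained.

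The main obstacle here is conceptual rather than computational: one must recognize that the commutation of scalar actions on a single element is a theorem and not an axiom, and that it is hidden in the bilinearity condition (3) together with associativity of the monoid product. The mechanism is that (3) provides two routes for scaling a product — act on the left factor or act on the right factor — and these routes must agree; inserting the identity $\mathbf{1}$ as the second factor is what transfers that agreement from products $xy$ down to an arbitrary single element $x$. Once the two-way evaluation of $(\alpha\cdot x)(\beta\cdot\mathbf{1})$ is in place, the remaining steps are direct substitutions of conditions (1)--(3), so the proof closes at once.
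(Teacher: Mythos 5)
Your proof is correct and uses essentially the same device as the paper: the product identity is proved by the same chain of conditions (3), (3), (2), and the scalar commutation $\alpha\cdot\left(\beta\cdot x\right)=\beta\cdot\left(\alpha\cdot x\right)$ is extracted by forming a product of $\alpha\cdot x$ with $\beta\cdot\mathbf{1}$ and exploiting the two-sidedness of condition (3) together with the first part of the lemma. The paper writes this as a single chain starting from $\alpha\cdot\left(\beta\cdot\mathbf{1}x\right)$ rather than equating two evaluations of $\left(\alpha\cdot x\right)\left(\beta\cdot\mathbf{1}\right)$, but that difference is purely presentational.
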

\begin{proof}
By Definition \ref{thm:def1}, $\left(\alpha\cdot x\right)\left(\beta\cdot y\right)=\alpha\cdot x\left(\beta\cdot y\right)=\alpha\cdot\left(\beta\cdot xy\right)=\alpha\beta\cdot xy.$
Also,
\begin{gather*}
\alpha\beta\cdot x=\alpha\cdot\left(\beta\cdot x\right)=\alpha\cdot\left(\beta\cdot\mathbf{1}x\right)=\alpha\cdot\left(\beta\cdot\mathbf{1}\right)x=\\
\left(\beta\cdot\mathbf{1}\right)\left(\alpha\cdot x\right)=\beta\alpha\cdot\boldsymbol{1}x=\beta\alpha\cdot x=\beta\cdot\left(\alpha\cdot x\right)
\end{gather*}
since $\left(\beta\cdot\mathbf{1}\right)\left(\alpha\cdot x\right)=\beta\alpha\cdot\boldsymbol{1}x$
by the first part of the lemma.
\end{proof}
Let $R$ be a ring, $X$ a monoid. It is easy to verify that the \emph{trivial
scaling action} of $R$ on $X$ defined by $\lambda\cdot x=x$ for
all $\lambda\in R$ and $x\in X$ satisfies conditions (1) \textendash{}
(3) in Definition \ref{thm:def1}, so a monoid equipped with this
function is scalable monoid, namely a \emph{trivially scalable} monoid,
though essentially just a monoid since the operation $\left(\lambda,x\right)\mapsto\lambda\cdot x$
is usually of no interest in this case.

Since every monoid has a unique identity element, the class of all
monoids forms a variety of algebras with a binary operation $\ast:\left(x,y\right)\mapsto xy$,
a nullary operation $\boldsymbol{1}:\left(\right)\mapsto\boldsymbol{1}$
and identities 
\[
x\left(yz\right)=\left(xy\right)z,\qquad\boldsymbol{1}x=x=x\boldsymbol{1}.
\]
The class of all scalable monoids over a fixed ring $R$ is a variety
of algebras in addition equipped with a set of unary operations $\left\{ \omega_{\lambda}\mid\lambda\in R\right\} $,
derived from the external binary operation $\cdot$ in Definition
\ref{thm:def1} by setting $\omega_{\lambda}\left(x\right)=\lambda\cdot x$
for all $\lambda\in R$ and $x\in X$, and with the additional identities
\[
\omega_{1}\left(x\right)=x,\qquad\omega_{\lambda}\left(\omega_{\kappa}\left(x\right)\right)=\omega_{\lambda\kappa}\left(x\right),\qquad\omega_{\lambda}\left(xy\right)=\omega_{\lambda}\left(x\right)\,y=x\,\omega_{\lambda}\left(y\right)\qquad\left(\lambda,\kappa\in R\right),
\]
corresponding to identities (1) \textendash{} (3) in Definition \ref{thm:def1}. 

A scalable monoid is thus a universal algebra 
\[
\left(X,\ast,(\omega{}_{\lambda})_{\lambda\in R},1_{X}\right)
\]
with $X$ as underlying set, here called a \emph{unital magma over}
$R$ or \emph{unital $R$-magma}. The general definitions of direct
products, subalgebras and homomorphisms in the theory of universal
algebras apply. In particular, a subalgebra of a unital $R$-magma
$X$ is a subset $Y$ of $X$ such that $1_{X}\in Y$ and if $x,y\in Y$
and $\lambda\in R$ then $xy,\lambda\cdot x\in Y$. Also, for given
unital $R$-magmas $X$ and $Y$, a unital $R$-magma homomorphism
$\phi:X\rightarrow Y$ is a function such that $\phi\left(xy\right)=\phi\left(x\right)\phi\left(y\right)$,
$\phi\left(\lambda\cdot x\right)=\lambda\cdot\phi\left(x\right)$
and $\phi\left(1_{X}\right)=1_{Y}$ for any $x,y\in X$ and $\lambda\in R$.

By Birkhoff's theorem \cite{BIRK}, varieties are preserved by the
operations of forming subalgebras and homomorphic images. Thus, if
a unital $R$-magma $X$ is a scalable monoid over $R$ then a subalgebra
of $X$ is also a scalable monoid over $R$, and a homomorphic image
of $X$ is also a scalable monoid over $R$.

\section{\label{sec:2-7}Congruences and quotients}

\subsection{\label{subsec:13}On the congruence $\sim$}

In ancient Greek mathematics, the notion of a ratio between magnitudes
only applied to magnitudes of the same kind, so only these could be
commensurable. In this section, we introduce a more radical idea:
quantities are of the same kind \emph{if and only if} they are commensurable.

Let $R\cdot x$ denote the \emph{orbit} of $x\in X$ with regard to
the action $\left(\lambda,x\right)\mapsto\lambda\cdot x$, that is,
the set $\left\{ \lambda\cdot x\mid\lambda\in R\right\} $, and let
$\approx$ denote the relation on $X$ such that $x\approx y$ if
and only if there is some $t\in X$ such that $x,y\in R\cdot t$.
Note that $\approx$ is not an equivalence relation; it is reflexive
since $x\in R\cdot x$ for all $x\in X$ and symmetric by construction
but not transitive, meaning that the orbits of a monoid action may
overlap. 
\begin{defn}
\label{d3.1}Given a scalable monoid $X$ over $R$, let $\sim$ be
the relation on $X$ such that $x\sim y$ if and only if $\alpha\cdot x=\beta\cdot y$
for some $\alpha,\beta\in R.$ 
\end{defn}
Note that $x\sim y$ if and only if $\left(R\cdot x\right)\cap\left(R\cdot y\right)\neq\emptyset$.
We say that $x$ and $y$ are \emph{commensurable} if and only if
$x\sim y$; otherwise $x$ and $y$ are \emph{incommensurable}.
\begin{prop}
\label{s3.1} The relation $\sim$ on a scalable monoid $X$ over
$R$ is an equivalence relation.
\end{prop}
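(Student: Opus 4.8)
The plan is to verify directly from Definition \ref{d3.1} the three defining properties of an equivalence relation — reflexivity, symmetry, and transitivity — invoking Lemma \ref{thm:lem1} at the one point where the possible noncommutativity of $R$ would otherwise cause trouble.

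Reflexivity and symmetry should be immediate and require nothing beyond the definition. For reflexivity, taking $\alpha=\beta=1$ gives $1\cdot x=1\cdot x$, so $x\sim x$ for every $x\in X$. For symmetry, the defining condition $\alpha\cdot x=\beta\cdot y$ is manifestly symmetric between its two sides: if it holds, then $\beta\cdot y=\alpha\cdot x$ already witnesses $y\sim x$. So I would dispatch both of these in a sentence each.

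The substance of the proof is transitivity. Suppose $x\sim y$ and $y\sim z$, so that $\alpha\cdot x=\beta\cdot y$ and $\gamma\cdot y=\delta\cdot z$ for some $\alpha,\beta,\gamma,\delta\in R$. The idea is to scale the first equation by $\gamma$ and the second by $\beta$, producing (via axiom (2) of Definition \ref{thm:def1}) the equalities $\gamma\alpha\cdot x=\gamma\beta\cdot y$ and $\beta\gamma\cdot y=\beta\delta\cdot z$, and then to glue them along their common middle factor in $y$. The one place where care is needed is exactly here: the two middle terms are $\gamma\beta\cdot y$ and $\beta\gamma\cdot y$, which are literally equal only when $\gamma\beta=\beta\gamma$ in $R$ — something we cannot assume. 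This is precisely where Lemma \ref{thm:lem1} does the work, since it guarantees $\gamma\beta\cdot y=\beta\gamma\cdot y$ whether or not $R$ is commutative. With that identification in hand the chain closes, $\gamma\alpha\cdot x=\gamma\beta\cdot y=\beta\gamma\cdot y=\beta\delta\cdot z$, so that $\mu=\gamma\alpha$ and $\nu=\beta\delta$ serve as witnesses for $x\sim z$.

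Thus the only genuine obstacle is the order of the scalar actions in a possibly noncommutative ring, and Lemma \ref{thm:lem1} is tailored to dissolve it; no further calculation is required once the commuting-action identity is applied. In writing this up I would record transitivity as the single displayed chain $\gamma\alpha\cdot x=\gamma\beta\cdot y=\beta\gamma\cdot y=\beta\delta\cdot z$, with the middle equality attributed to Lemma \ref{thm:lem1} and the outer two to axiom (2).
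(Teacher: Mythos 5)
Your proof is correct and follows essentially the same route as the paper's: reflexivity via $1\cdot x=1\cdot x$, symmetry by construction, and transitivity by scaling the two witnessing equations and gluing them with the commuting-scalars identity of Lemma \ref{thm:lem1}. The paper writes the chain as $\gamma\alpha\cdot x=\gamma\cdot\left(\alpha\cdot x\right)=\gamma\cdot\left(\beta\cdot y\right)=\beta\cdot\left(\gamma\cdot y\right)=\beta\cdot\left(\delta\cdot z\right)=\beta\delta\cdot z$, which is exactly your chain with the Lemma invoked at the same middle step, so there is nothing to add.
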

\begin{proof}
The relation $\sim$ is reflexive since $1\cdot x=1\cdot x$ for all
$x\in X$, symmetric by construction, and transitive because if $\alpha\cdot x=\beta\cdot y$
and $\gamma\cdot y=\delta\cdot z$ for some $x,y,z\in X$ and $\alpha,\beta,\gamma,\delta\in R$
then 
\[
\gamma\alpha\cdot x=\gamma\cdot\left(\alpha\cdot x\right)=\gamma\cdot\left(\beta\cdot y\right)=\beta\cdot\left(\gamma\cdot y\right)=\beta\cdot\left(\delta\cdot z\right)=\beta\delta\cdot z,
\]
where $\gamma\alpha,\beta\delta\in R$. 
\end{proof}
An \emph{orbit} \emph{class} $\mathsf{C}$ is an equivalence class
for $\sim$. The orbit class that contains $x$ is denoted $\left[x\right]$,
and $X/{\sim}$ denotes the set $\left\{ \left[x\right]\mid x\in X\right\} $.
\begin{prop}
If $x\sim y$ then $\lambda\cdot x\sim y$, $x\sim\lambda\cdot y$
and $\lambda\cdot x\sim\lambda\cdot y$ for all $\lambda\in R$.
\end{prop}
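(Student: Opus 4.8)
The plan is to unwind Definition \ref{d3.1} and reduce everything to a short computation governed by Lemma \ref{thm:lem1}. Assume $x \sim y$, so that $\alpha \cdot x = \beta \cdot y$ for some $\alpha, \beta \in R$. The essential point will be that the scaling action \emph{commutes} in the sense made precise by Lemma \ref{thm:lem1}, namely $\mu\nu \cdot z = \nu\mu \cdot z$ together with $\mu \cdot (\nu \cdot z) = \mu\nu \cdot z$; this lets me slide the new scalar $\lambda$ past the witnessing scalars $\alpha, \beta$ even though $R$ is not assumed commutative.

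First I would handle $\lambda \cdot x \sim y$. Applying $\alpha$ to $\lambda \cdot x$ and invoking Lemma \ref{thm:lem1} at each step, I compute
\[
\alpha \cdot (\lambda \cdot x) = \alpha\lambda \cdot x = \lambda\alpha \cdot x = \lambda \cdot (\alpha \cdot x) = \lambda \cdot (\beta \cdot y) = \lambda\beta \cdot y .
\]
Thus $\alpha \cdot (\lambda \cdot x) = (\lambda\beta) \cdot y$, which exhibits the two scalars required by Definition \ref{d3.1} and so shows $\lambda \cdot x \sim y$.

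Next, for $x \sim \lambda \cdot y$ I can simply invoke symmetry of $\sim$ (Proposition \ref{s3.1}): applying the previous step to the relation $y \sim x$ yields $\lambda \cdot y \sim x$, and hence $x \sim \lambda \cdot y$. Alternatively one runs the analogous direct chain ending in $\alpha\lambda \cdot x = \beta \cdot (\lambda \cdot y)$. Finally, for $\lambda \cdot x \sim \lambda \cdot y$ the cleanest route is a single direct computation extending the first one,
\[
\alpha \cdot (\lambda \cdot x) = \lambda\beta \cdot y = \beta\lambda \cdot y = \beta \cdot (\lambda \cdot y),
\]
so that $\alpha$ and $\beta$ themselves serve as the witnesses; alternatively this follows from the first two parts together with transitivity of $\sim$ (Proposition \ref{s3.1}).

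I do not anticipate a genuine obstacle: once Lemma \ref{thm:lem1} is available, the argument is a routine substitution. The only point demanding care is the noncommutativity of $R$, so I must ensure that every rearrangement of the form $\alpha\lambda \cdot x = \lambda\alpha \cdot x$ is justified by the commutation identity of Lemma \ref{thm:lem1}, and not by a commutativity assumption on $R$ that is absent here.
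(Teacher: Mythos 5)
Your proposal is correct and follows essentially the same route as the paper: the paper's proof is the single chain $\alpha\lambda\cdot x=\alpha\cdot\left(\lambda\cdot x\right)=\lambda\cdot\left(\alpha\cdot x\right)=\lambda\cdot\left(\beta\cdot y\right)=\beta\lambda\cdot y$, from which all three claims are read off, and your computations are just this chain made explicit case by case, with the same reliance on Lemma \ref{thm:lem1} to commute scalars rather than any commutativity of $R$.
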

\begin{proof}
If $x\sim y$ then $\alpha\cdot x=\beta\cdot y$ for some $\alpha,\beta\in R$,
so 
\[
\alpha\lambda\cdot x=\alpha\cdot\left(\lambda\cdot x\right)=\lambda\cdot\left(\alpha\cdot x\right)=\lambda\cdot\left(\beta\cdot y\right)=\beta\cdot\left(\lambda\cdot y\right)=\beta\lambda\cdot y,
\]
where $\alpha\lambda,\beta\lambda\in R$. 
\end{proof}
\begin{cor}
\label{c31}$\lambda\cdot x\sim x$ and $x\sim\lambda\cdot x$ for
all $x\in X$ and $\lambda\in R$.
\end{cor}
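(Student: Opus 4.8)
The plan is to obtain the statement as an immediate specialization of the preceding Proposition. Since $\sim$ is an equivalence relation by Proposition~\ref{s3.1}, it is in particular reflexive, so that $x\sim x$ for every $x\in X$. Applying the preceding Proposition with $y$ taken to be $x$ then yields $\lambda\cdot x\sim x$ from its first clause and $x\sim\lambda\cdot x$ from its second clause, for all $\lambda\in R$. This already delivers the whole of the claim.

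Alternatively, and perhaps more transparently, I would verify both relations directly from Definition~\ref{d3.1}. To establish $\lambda\cdot x\sim x$ it suffices to exhibit scalars $\alpha,\beta\in R$ with $\alpha\cdot\left(\lambda\cdot x\right)=\beta\cdot x$; choosing $\alpha=1$ and $\beta=\lambda$ works, since $1\cdot\left(\lambda\cdot x\right)=\lambda\cdot x$ by the identity $1\cdot z=z$ of Definition~\ref{thm:def1}. The companion relation $x\sim\lambda\cdot x$ is then immediate from the symmetry of $\sim$ (again Proposition~\ref{s3.1}), or equally directly by exchanging the roles of $\alpha$ and $\beta$.

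There is essentially no obstacle here: the corollary is a routine consequence of the results just proved, and either route is a one-line argument. The only point worth a moment's care is to confirm that the scalars witnessing commensurability may be chosen to be $1$ and $\lambda$, which is exactly what the first axiom of Definition~\ref{thm:def1} guarantees.
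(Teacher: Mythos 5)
Your primary argument is exactly the paper's intended one: the statement is stated as a corollary of the preceding proposition, obtained by taking $y=x$ there and invoking reflexivity of $\sim$ from Proposition~\ref{s3.1}, which is precisely what you do. Your alternative direct verification via the witnesses $\alpha=1$, $\beta=\lambda$ is also correct, but the main route coincides with the paper's.
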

\begin{prop}
We have $0\cdot x=0\cdot y$ if and only if $x\sim y$.
\end{prop}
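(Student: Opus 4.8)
The plan is to verify the two implications separately; both follow at once from Definition \ref{d3.1} together with the scaling identities already in place, so the argument is essentially a direct computation rather than anything requiring ingenuity.

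For the forward implication, suppose $0\cdot x = 0\cdot y$. I would simply note that Definition \ref{d3.1} only asks for the \emph{existence} of some $\alpha,\beta\in R$ with $\alpha\cdot x = \beta\cdot y$. Taking $\alpha = \beta = 0$ and invoking the hypothesis exhibits such a pair, so $x\sim y$ immediately.

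For the converse, suppose $x\sim y$, so that $\alpha\cdot x = \beta\cdot y$ for some $\alpha,\beta\in R$. I would apply the scaling action by $0$ to both sides of this equality and then collapse each iterated scalar product using property (2) of Definition \ref{thm:def1}, namely $\lambda\cdot\left(\mu\cdot z\right) = \lambda\mu\cdot z$. This yields $0\cdot\left(\alpha\cdot x\right) = \left(0\alpha\right)\cdot x$ and $0\cdot\left(\beta\cdot y\right) = \left(0\beta\right)\cdot y$. Because $R$ is a ring we have $0\alpha = 0\beta = 0$, and hence $0\cdot x = 0\cdot\left(\alpha\cdot x\right) = 0\cdot\left(\beta\cdot y\right) = 0\cdot y$, as required.

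There is no genuine obstacle here; the only point deserving a moment's care is the ring identity $0\lambda = 0$, which is exactly what lets the scalar $0$ absorb any prefactor and thereby forces $0\cdot x$ to depend only on the orbit class of $x$. In this sense the proposition sharpens the earlier Corollary \ref{c31}: the element $0\cdot x$ is a canonical representative attached to the whole class $\left[x\right]$, and equality of these distinguished elements precisely characterizes membership in a common orbit class.
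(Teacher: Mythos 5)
Your proof is correct and follows essentially the same route as the paper: the nontrivial direction is the identical computation $0\cdot x=\left(0\alpha\right)\cdot x=0\cdot\left(\alpha\cdot x\right)=0\cdot\left(\beta\cdot y\right)=\left(0\beta\right)\cdot y=0\cdot y$, relying on property (2) of Definition \ref{thm:def1} and the ring identity $0\lambda=0$. The only difference is that you spell out the trivial forward implication (taking $\alpha=\beta=0$ in Definition \ref{d3.1}), which the paper leaves implicit.
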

\begin{proof}
If $\alpha\cdot x=\beta\cdot y$ then $0\cdot x=0\alpha\cdot x=0\cdot\left(\alpha\cdot x\right)=0\cdot\left(\beta\cdot y\right)=\left(0\beta\right)\cdot y=0\cdot y$.
\end{proof}
Thus, for every orbit class $\mathsf{C}$ there is a unique $0_{\mathsf{C}}\in\mathsf{C}$
such that $0_{\mathsf{C}}=0\cdot x$ for all $x\in\mathsf{C}$, and
if $0_{\mathsf{A}}=0_{\mathsf{B}}$ then $\mathsf{A}=\mathsf{B}$;
$0_{\mathsf{C}}$ is the \emph{zero element} of $\mathsf{C}$. It
is clear that $\lambda\cdot0_{\mathsf{C}}=0_{\mathsf{C}}$ for all
$\lambda\in R$, and that $0_{\left[x\right]}y=0_{\left[xy\right]}$
and $y0_{\left[x\right]}=0_{\left[yx\right]}$ for all $x,y\in X$.

If $x=\alpha\cdot t$ and $y=\beta\cdot t$ then $\beta\cdot x=\beta\cdot\left(\alpha\cdot t\right)=\alpha\cdot\left(\beta\cdot t\right)=\alpha\cdot y$,
so if $x\approx y$ then $x\sim y$. If $t\in R\cdot x$ then $t,x\in R\cdot x$,
so $t\approx x$, so $t\sim x$, so $t\in\left[x\right]$; hence,
$R\cdot x\subseteq\left[x\right]$ for all $x\in X$. As a consequence,
$\cup_{t\in\left[x\right]}R\cdot t=\left[x\right]$.

It is instructive to relate the present notion of commensurability
to the classical one. We say that $x$ and $y$ are \emph{strongly
commensurable} if and only if $x\approx y$; otherwise, $x$ and $y$
are \emph{weakly incommensurable}. Incommensurability of magnitudes
in the Pythagorean sense obviously corresponds to weak incommensurability. 

We have thus weakened the classical notion of commensurability here,
and this makes it possible to reasonably stipulate that two magnitudes
(elements of a scalable monoid) are of the same kind if and only if
they are commensurable. The deeper significance of the redefinition
of commensurability may be said to be that we have shown how to replace
the intuitive notion of magnitudes of the same kind by the formally
defined notion of commensurable magnitudes.
\begin{prop}
\label{s3.2}Let $X$ be a scalable monoid over $R$. The relation
$\sim$ is a congruence on $X$ with regard to the operations $\left(x,y\right)\mapsto xy$
and $\left(\lambda,x\right)\mapsto\lambda\cdot x$.
\end{prop}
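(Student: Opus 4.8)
The plan is to verify the two substitution properties that together define a congruence for the unital $R$-magma structure on $X$: compatibility with the binary multiplication $(x,y)\mapsto xy$, and compatibility with each unary scaling operation $\omega_\lambda\colon x\mapsto\lambda\cdot x$ for $\lambda\in R$. Since $\sim$ is already known to be an equivalence relation by Proposition~\ref{s3.1}, only these substitution properties remain.

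The scaling part is essentially already in hand. The earlier proposition asserting that $x\sim y$ implies $\lambda\cdot x\sim\lambda\cdot y$ for every $\lambda\in R$ is exactly the statement that each operation $\omega_\lambda$ respects $\sim$, so no further argument is needed there. For the multiplication, I would show that $x_1\sim x_2$ and $y_1\sim y_2$ together imply $x_1y_1\sim x_2y_2$. By the definition of $\sim$ there are $\alpha,\beta\in R$ with $\alpha\cdot x_1=\beta\cdot x_2$ and $\gamma,\delta\in R$ with $\gamma\cdot y_1=\delta\cdot y_2$. Multiplying these two equalities and applying the product rule $(\alpha\cdot x)(\gamma\cdot y)=\alpha\gamma\cdot xy$ from Lemma~\ref{thm:lem1} to each side yields
\[
\alpha\gamma\cdot x_1y_1=(\alpha\cdot x_1)(\gamma\cdot y_1)=(\beta\cdot x_2)(\delta\cdot y_2)=\beta\delta\cdot x_2y_2,
\]
and since $\alpha\gamma,\beta\delta\in R$ this is precisely the assertion $x_1y_1\sim x_2y_2$.

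I expect no genuine obstacle. The one point worth getting right is that multiplication should be handled in both arguments simultaneously rather than one slot at a time; an alternative would be to prove left- and right-compatibility separately and then chain them through transitivity, but the direct route is cleaner because Lemma~\ref{thm:lem1} is tailored for exactly this, allowing the two witnessing scalars to fuse into a single scalar product on each side of the equation.
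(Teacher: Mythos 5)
Your proposal is correct and follows essentially the same route as the paper's proof: multiplication compatibility is obtained by multiplying the two witnessing equations and fusing the scalars via Lemma~\ref{thm:lem1}, and compatibility with each $\omega_\lambda$ is delegated to the earlier proposition that $x\sim y$ implies $\lambda\cdot x\sim\lambda\cdot y$. The only difference is notational (your $\alpha,\beta,\gamma,\delta$ versus the paper's $\alpha,\alpha',\beta,\beta'$), so there is nothing further to add.
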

\begin{proof}
For any $x,x',y,y'\in X$ and $\alpha,\alpha',\beta,\beta'\in R$,
we have that if $\alpha\cdot x=\alpha'\cdot x'$ and $\beta\cdot y=\beta'\cdot y'$
then $\left(\alpha\cdot x\right)\left(\beta\cdot y\right)=\left(\alpha'\cdot x'\right)\left(\beta'\cdot y'\right)$,
so $\alpha\beta\cdot xy=\alpha'\beta'\cdot x'y'$ by Lemma \ref{thm:lem1}.
This means that if $x\sim x'$ and $y\sim y'$ then $xy\sim x'y'$.
Also, recall that if $x\sim x'$ then $\lambda\cdot x\sim\lambda\cdot x'$
for any $\lambda\in R$.
\end{proof}
We can thus define a binary operation on $X/{\sim}$ by setting $[x][y]=[xy]$
(so that if $\mathsf{A},\mathsf{B}\in X/{\sim}$, $a\in\mathsf{A}$
and $b\in\mathsf{B}$ then $ab\in\mathsf{AB}\in X/{\sim})$. We can
also set $\lambda\cdot\left[x\right]=\left[\lambda\cdot x\right]$
and $1_{X/{\sim}}=\left[1_{X}\right]$. Given these definitions, the
surjective function $\phi:X\rightarrow X/{\sim}$ given by $\phi\left(x\right)=\left[x\right]$
satisfies the conditions 
\[
\phi\left(xy\right)=\phi\left(x\right)\phi\left(y\right),\quad\phi\left(\lambda\cdot x\right)=\lambda\cdot\phi\left(x\right),\quad\phi\left(1_{X}\right)=1_{X/{\sim}}.
\]
These identities induce a unital $R$-magma structure on $X/{\sim}$,
and by Birkhoff's theorem $X/{\sim}$ is an $R$-scaloid, so $\phi$
is a scalable monoid homomorphism. Thus, Proposition \ref{s3.2},
which is expressed in terms of congruences, leads to Proposition \ref{s3.3-1},
expressed in terms of homomorphisms.
\begin{prop}
\label{s3.3-1}If $X$ is a scalable monoid over $R$ then the quotient
space $X/{\sim}$ is a scalable monoid over $R$, and the function

\[
\phi:X\rightarrow X/{\sim},\qquad x\mapsto\left[x\right],
\]
is a surjective scalable monoid homomorphism.
\end{prop}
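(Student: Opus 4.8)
The plan is to read this proposition as the homomorphism-theoretic shadow of Proposition \ref{s3.2}, and to settle it by combining the congruence property established there with Birkhoff's theorem. Concretely, the argument falls into three steps: promote $\sim$ from a congruence to a genuine quotient structure by checking that the candidate operations on $X/{\sim}$ are well defined, observe that $\phi$ is then a homomorphism of unital $R$-magmas essentially by construction, and finally upgrade ``unital $R$-magma'' to ``scalable monoid'' using closure of varieties under homomorphic images.

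First I would fix the operations $[x][y]=[xy]$, $\lambda\cdot[x]=[\lambda\cdot x]$ and $1_{X/{\sim}}=[1_X]$ on $X/{\sim}$ and confirm they are well defined, i.e. independent of the chosen representatives. This is precisely what Proposition \ref{s3.2} delivers: $x\sim x'$ and $y\sim y'$ together force $xy\sim x'y'$, so $[x][y]$ is unambiguous, and $x\sim x'$ forces $\lambda\cdot x\sim\lambda\cdot x'$, so $\lambda\cdot[x]$ is too; the nullary $1_{X/{\sim}}=[1_X]$ requires nothing. With these operations in place the identities $\phi(xy)=\phi(x)\phi(y)$, $\phi(\lambda\cdot x)=\lambda\cdot\phi(x)$ and $\phi(1_X)=1_{X/{\sim}}$ hold by definition, so $\phi$ is a unital $R$-magma homomorphism, and it is surjective because every class has the form $[x]=\phi(x)$.

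It remains to verify that $X/{\sim}$ actually satisfies the axioms of Definition \ref{thm:def1}, and here I would invoke Birkhoff's theorem exactly as in the paragraph preceding the statement: the scalable monoids over $R$ form a variety, and $X/{\sim}$ is the homomorphic image of the scalable monoid $X$ under $\phi$, hence lies in that variety. This citation is the step that does the substantive work, even though it compresses to a single line; the honest but pedestrian alternative is to transport each defining identity across $\phi$, deriving for instance $\alpha\cdot([x][y])=(\alpha\cdot[x])[y]$ from identity (3) in $X$ by surjectivity. I expect no real obstacle: the only genuine subtlety --- well-definedness of the quotient operations --- has already been cleared by Proposition \ref{s3.2}, so the one thing to watch is that the operations chosen on $X/{\sim}$ are exactly the ones making $\phi$ a homomorphism, which is what licenses the appeal to Birkhoff.
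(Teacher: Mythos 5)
Your proposal is correct and follows the paper's own argument essentially verbatim: the paper likewise defines $[x][y]=[xy]$, $\lambda\cdot[x]=[\lambda\cdot x]$ and $1_{X/{\sim}}=[1_X]$ (well defined by Proposition \ref{s3.2}), observes that $\phi$ then satisfies the three homomorphism identities by construction and is surjective, and invokes Birkhoff's theorem to conclude that the unital $R$-magma $X/{\sim}$ is an $R$-scaloid. Your remark about watching that the quotient operations are exactly those making $\phi$ a homomorphism matches the paper's phrase that these identities ``induce'' the unital $R$-magma structure on $X/{\sim}$.
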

In this case, $\lambda\cdot\left[x\right]=\left[\lambda\cdot x\right]=\left[x\right]$
by Corollary \ref{c31}, so we have the following fact.
\begin{prop}
\label{p5-1}If $X$ is a scalable monoid then $X/{\sim}$ is a (trivially
scalable) monoid.
\end{prop}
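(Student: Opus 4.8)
The plan is to compute the scaling action on $X/{\sim}$ directly and observe that it collapses to the trivial action. By Proposition \ref{s3.3-1}, the quotient $X/{\sim}$ is already known to be a scalable monoid over $R$, with scaling defined by $\lambda\cdot[x]=[\lambda\cdot x]$. It therefore remains only to verify that this induced action is trivial, that is, that $\lambda\cdot[x]=[x]$ for every $\lambda\in R$ and every $x\in X$; once that is established, the observation from Section \ref{s12} that a trivially scalable monoid is in essence just a monoid completes the argument.

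To verify triviality, I would invoke Corollary \ref{c31}, which gives $\lambda\cdot x\sim x$ for all $x\in X$ and $\lambda\in R$. Since $\sim$-equivalent elements determine the same orbit class, this yields $[\lambda\cdot x]=[x]$, and hence $\lambda\cdot[x]=[\lambda\cdot x]=[x]$, as required. This is precisely the chain of equalities already flagged in the remark preceding the proposition.

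I do not expect any genuine obstacle here: the entire content of the proposition is carried by Corollary \ref{c31}, and the remaining steps are merely the definitional unfolding of the scaling action on the quotient supplied by Proposition \ref{s3.3-1}. The only point worth stating carefully is that we are checking triviality of \emph{this} induced structure rather than imposing a new one, so that the conclusion genuinely describes the scalable monoid $X/{\sim}$ constructed earlier.
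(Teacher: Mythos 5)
Your proof is correct and is essentially identical to the paper's own argument: the paper proves this proposition via the remark immediately preceding it, namely that $\lambda\cdot\left[x\right]=\left[\lambda\cdot x\right]=\left[x\right]$ by Corollary \ref{c31}, applied to the scalable monoid structure on $X/{\sim}$ furnished by Proposition \ref{s3.3-1}. Your additional care in distinguishing ``verifying triviality of the induced action'' from ``imposing a new structure'' is exactly the right reading of the statement.
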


\subsection{\label{s14}On congruences of the form $\sim_{\!\mathcal{M}}$}

In a monoid we have $x\left(yz\right)=\left(xy\right)z$ and $\boldsymbol{1}x=x=x\boldsymbol{1}$,
so a submonoid $\mathcal{M}$ of a scalable monoid $X$ can act as
a monoid on $X$ by left or right multiplication. In particular, we
can define a monoid action $\left(m,x\right)\mapsto m\star x$ on
a scalable monoid $X$ by setting $m\star x=mx$ for any $m\in\mathcal{M}$
and $x\in X$. For any $x\in X$, the orbit of $x$ with regard to
this action is the right coset $\mathcal{M}x=\left\{ mx\mid m\in\mathcal{M}\right\} $.
Definition \ref{def5} below is analogous to Definition \ref{d3.1},
interpreting left multiplication as a monoid action. 
\begin{defn}
\label{def5}Let $X$ be a scalable monoid and $\mathcal{M}$ a submonoid
of $X$. Then $\sim_{\mathcal{M}}$ is the relation on $X$ such that
$x\sim_{\!\mathcal{M}}y$ if and only if $mx=ny$ for some $m,n\in\mathcal{M}$. 
\end{defn}
\begin{prop}
\label{s3.1-1}If $X$ is a scalable monoid and $\mathcal{M}$ a commutative
submonoid of $X$ then $\sim_{\!\mathcal{M}}$ is an equivalence relation
on $X$. 
\end{prop}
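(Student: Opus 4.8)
The plan is to verify the three defining properties of an equivalence relation, closely following the proof of Proposition \ref{s3.1}. Reflexivity and symmetry are immediate, and essentially the whole weight of the argument --- in particular the need for the hypothesis that $\mathcal{M}$ be commutative --- falls on transitivity.

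First I would dispose of reflexivity: since $\mathcal{M}$ is a submonoid it contains the identity $\mathbf{1}=1_{\!X}$, so $\mathbf{1}x=\mathbf{1}x$ shows $x\sim_{\!\mathcal{M}}x$ for every $x\in X$. Symmetry is built into the definition, since the condition ``$mx=ny$ for some $m,n\in\mathcal{M}$'' is visibly symmetric in $x$ and $y$.

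For transitivity, suppose $x\sim_{\!\mathcal{M}}y$ and $y\sim_{\!\mathcal{M}}z$, so that $mx=ny$ and $py=qz$ for some $m,n,p,q\in\mathcal{M}$. Left-multiplying the first equation by $p$ and the second by $n$, and using associativity in the monoid $X$, gives $(pm)x=(pn)y$ and $(np)y=(nq)z$. Since $\mathcal{M}$ is commutative, $pn=np$, so the right-hand side of the first equation and the left-hand side of the second coincide, yielding $(pm)x=(nq)z$. As $\mathcal{M}$ is closed under multiplication, $pm,nq\in\mathcal{M}$, whence $x\sim_{\!\mathcal{M}}z$.

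I expect transitivity to be the only genuine obstacle, and the point where commutativity of $\mathcal{M}$ is indispensable: without it one could still form $(pm)x=(pn)y$ and $(np)y=(nq)z$, but there would be no way to splice these together across the common element $y$, since $(pn)y$ and $(np)y$ need not agree. This mirrors the role played by Lemma \ref{thm:lem1} (commutativity of the scaling action) in the proof that $\sim$ is transitive in Proposition \ref{s3.1}.
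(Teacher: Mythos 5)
Your proof is correct and is essentially identical to the paper's: the paper also gets reflexivity from $1_{X}x=1_{X}x$, symmetry by construction, and transitivity by the chain $m'mx=m'ny=nm'y=nn'z$, which is exactly your splicing of $(pm)x=(pn)y$ with $(np)y=(nq)z$ via commutativity of $\mathcal{M}$ (with $p,q$ renamed $m',n'$). Your closing remark on where commutativity is indispensable also matches the paper's design, since the paper's subsequent results replace this hypothesis by centrality of $\mathcal{M}$ for the same reason.
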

\begin{proof}
The relation $\sim_{\!\mathcal{M}}$ is reflexive since $1_{X}x\sim_{\!\mathcal{M}}1_{X}x$
for all $x\in X$, symmetric by construction, and transitive because
if $mx=ny$ and $m'y=n'z$ for $m,n,m',n'\in\mathcal{M}$ then $m'mx=m'ny=nm'y=nn'z$,
where $m'm,nn'\in\mathcal{M}$.
\end{proof}
We denote the equivalence class $\left\{ t\mid t\sim_{\!\mathcal{M}}x\right\} $
for $\sim_{\!\mathcal{M}}$ by $\left[x\right]_{\mathcal{M}}$, and
the set of equivalence classes $\left\{ \left[x\right]_{\mathcal{M}}\mid x\in X\right\} $
by $X/\mathcal{M}$.

The \emph{center} of a scalable monoid $X$, denoted $Z\left(X\right)$,
is the set of elements of $X$ each of which commutes with all elements
of $X$; clearly, $1_{X}\in Z\left(X\right)$. A \emph{central submonoid}
of a scalable monoid $X$ is a submonoid $\mathcal{M}$ of $X$ such
that $\mathcal{M}\subseteq Z\left(X\right)$. We have the following
corollary of Proposition \ref{s3.1-1}.
\begin{cor}
\label{cor:ny3}If $X$ is a scalable monoid and $\mathcal{M}$ a
central submonoid of $X$ then $\sim_{\!\mathcal{M}}$ is an equivalence
relation on $X$.
\end{cor}
Results analogous to Propositions \ref{s3.2} and \ref{s3.3-1} hold
for central submonoids. 
\begin{prop}
\label{s3.2-1-2}If $X$ is a scalable monoid and $\mathcal{M}$ a
central submonoid of $X$ then the relation $\sim_{\!\mathcal{M}}$
is a congruence on $X$ with regard to the operations $\left(x,y\right)\mapsto xy$
and $\left(\lambda,x\right)\mapsto\lambda\cdot x$.
\end{prop}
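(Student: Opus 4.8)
The plan is to invoke Corollary \ref{cor:ny3}, which already guarantees that $\sim_{\!\mathcal{M}}$ is an equivalence relation on $X$, so that it only remains to verify the two compatibility conditions that promote an equivalence relation to a congruence: stability under the multiplication $\left(x,y\right)\mapsto xy$ and under the scaling action $\left(\lambda,x\right)\mapsto\lambda\cdot x$. I would treat these two cases in turn, exactly paralleling the structure of the proof of Proposition \ref{s3.2}.

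For multiplication I would begin from witnesses of the hypotheses. If $x\sim_{\!\mathcal{M}}x'$ and $y\sim_{\!\mathcal{M}}y'$, then there are $m,n,m',n'\in\mathcal{M}$ with $mx=nx'$ and $m'y=n'y'$, and multiplying these two equalities gives $\left(mx\right)\left(m'y\right)=\left(nx'\right)\left(n'y'\right)$. The decisive step is to collect the two $\mathcal{M}$-factors on each side: since $\mathcal{M}\subseteq Z(X)$, the factor $m'$ commutes with $x$ and the factor $n'$ commutes with $x'$, so that $\left(mx\right)\left(m'y\right)=mm'\,xy$ and $\left(nx'\right)\left(n'y'\right)=nn'\,x'y'$. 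As $\mathcal{M}$ is a submonoid we have $mm',nn'\in\mathcal{M}$, and the resulting equality $mm'\,xy=nn'\,x'y'$ exhibits $xy\sim_{\!\mathcal{M}}x'y'$.

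For scaling I would fix $\lambda\in R$ and again use a witness $mx=nx'$ of $x\sim_{\!\mathcal{M}}x'$. Applying the bilinearity axiom (3) of Definition \ref{thm:def1} to move the scalar through the left factor gives $m\left(\lambda\cdot x\right)=\lambda\cdot\left(mx\right)=\lambda\cdot\left(nx'\right)=n\left(\lambda\cdot x'\right)$, and since $m,n\in\mathcal{M}$ this is precisely $\lambda\cdot x\sim_{\!\mathcal{M}}\lambda\cdot x'$.

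The main obstacle is the multiplication case, and more precisely the single rearrangement $\left(mx\right)\left(m'y\right)=mm'\,xy$: without centrality the two $\mathcal{M}$-factors cannot in general be brought together past the intervening element $x$, so this is exactly the point at which the hypothesis $\mathcal{M}\subseteq Z(X)$ is indispensable, mirroring its role in the transitivity computation of Proposition \ref{s3.1-1}. The scaling case, by contrast, is routine once the appropriate bilinearity identity from Definition \ref{thm:def1} is recalled.
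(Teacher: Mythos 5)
Your proof is correct and follows essentially the same route as the paper's: multiply the two witness equations $mx=nx'$ and $m'y=n'y'$, use centrality of $\mathcal{M}$ to collect the $\mathcal{M}$-factors into $mm'$ and $nn'$, and for scaling push $\lambda$ through via axiom (3) of Definition \ref{thm:def1}. Your version is slightly more explicit than the paper's (naming Corollary \ref{cor:ny3} for the equivalence-relation part and spelling out exactly where $\mathcal{M}\subseteq Z(X)$ is used), but the argument is the same.
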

\begin{proof}
If $x,x',y,y'\in X$ and $m,n,m',n'\in\mathcal{M}$ then $mx=nx'$
and $m'y=n'y'$ implies $(mx)(m'y)=(nx')(n'y')$ so that $\left(mm'\right)\left(xy\right)=\left(nn'\right)\left(x'y'\right)$.
Hence, if \linebreak{}
$x\sim_{\!\mathcal{M}}x'$ and $y\sim_{\!\mathcal{M}}y'$ then $xy\sim_{\!\mathcal{M}}x'y'$
since $mm',nn'\in\mathcal{M}$. 

Also, if $mx=nx'$ for some $m,n\in M$ then $\lambda\cdot mx=\lambda\cdot nx'$
for all $\lambda\in R$, so $m\left(\lambda\cdot x\right)=n\left(\lambda\cdot x'\right)$.
Hence, if $x\sim_{\!\mathcal{M}}x'$ then $\lambda\cdot x\sim_{\!\mathcal{M}}\lambda\cdot x'$.
\end{proof}
\begin{cor}
\label{c1-1}If $X$ is a commutative scalable monoid and $\mathcal{M}$
a submonoid of $X$ then the relation $\sim_{M}$ is a congruence
on $X$ with regard to the operations $\left(x,y\right)\mapsto xy$
and $\left(\lambda,x\right)\mapsto\lambda\cdot x$.
\end{cor}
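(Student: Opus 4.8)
The plan is to obtain this as an immediate specialization of Proposition \ref{s3.2-1-2}. The key observation is that commutativity of $X$ forces every submonoid to be central: if $X$ is commutative, then by definition every element of $X$ commutes with every other element, so $Z\left(X\right)=X$. Consequently, for any submonoid $\mathcal{M}$ of $X$ we trivially have $\mathcal{M}\subseteq X=Z\left(X\right)$, which is exactly the defining condition for $\mathcal{M}$ to be a central submonoid.

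With this observation in hand, I would simply invoke Proposition \ref{s3.2-1-2}: since $\mathcal{M}$ is now seen to be a central submonoid of $X$, that proposition guarantees directly that $\sim_{\!\mathcal{M}}$ is a congruence on $X$ with regard to both $\left(x,y\right)\mapsto xy$ and $\left(\lambda,x\right)\mapsto\lambda\cdot x$. Note also that Corollary \ref{cor:ny3} already ensures that $\sim_{\!\mathcal{M}}$ is an equivalence relation in this setting, so the only substantive content beyond that is compatibility with the two operations, which Proposition \ref{s3.2-1-2} supplies.

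There is no real obstacle here; the corollary is a direct consequence of the preceding proposition once one notes that commutativity makes the centrality hypothesis automatic. The only point to be careful about is to state explicitly why $\mathcal{M}\subseteq Z\left(X\right)$ holds, so that the hypotheses of Proposition \ref{s3.2-1-2} are verified before it is applied.
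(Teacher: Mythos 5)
Your proposal is correct and is precisely the paper's (implicit) argument: the corollary is stated immediately after Proposition \ref{s3.2-1-2} with no separate proof, the intended justification being exactly that commutativity of $X$ gives $Z\left(X\right)=X$, so every submonoid is central and the proposition applies. Nothing is missing; your explicit remark about verifying $\mathcal{M}\subseteq Z\left(X\right)$ simply spells out the step the paper leaves to the reader.
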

We can thus define two operations on $X/\mathcal{M}$ by setting $[x]_{\mathcal{M}}[y]_{\mathcal{M}}=[xy]_{\mathcal{M}}$
and\linebreak{}
$\lambda\cdot\left[x\right]_{\mathcal{M}}=\left[\lambda\cdot x\right]_{\mathcal{M}}$.
We also set $1_{X/\mathcal{M}}=\left[1_{X}\right]_{\mathcal{M}}$.
Given these definitions, the surjective function $\phi_{\mathcal{M}}:X\rightarrow X/\mathcal{M}$
defined by $\phi_{\mathcal{M}}\left(x\right)=\left[x\right]_{\mathcal{M}}$
satisfies the conditions 
\[
\phi_{\mathcal{M}}\left(xy\right)=\phi_{\mathcal{M}}\left(x\right)\phi_{\mathcal{M}}\left(y\right),\quad\phi_{\mathcal{M}}\left(\lambda\cdot x\right)=\lambda\cdot\phi_{\mathcal{M}}\left(x\right),\quad\phi_{\mathcal{M}}\left(1_{X}\right)=1_{X/\mathcal{M}}.
\]
These identities induce a unital $R$-magma structure on $X/\mathcal{M}$,
and by Birkhoff's \linebreak{}
theorem $X/\mathcal{M}$ is an $R$-scaloid, so $\phi_{\mathcal{M}}$
is a scalable monoid homomorphism. \linebreak{}
Proposition \ref{s3.2-1-2} thus corresponds to the following result
about homomorphisms.
\begin{prop}
\label{p8}If $X$ is a scalable monoid over $R$ and $\mathcal{M}$
a central submonoid of $X$ then the quotient space $X/\mathcal{M}$
is a scalable monoid over $R$ and the function 
\[
\phi_{\mathcal{M}}:X\rightarrow X/\mathcal{M},\qquad x\mapsto\left[x\right]_{\mathcal{M}}
\]
is a surjective scalable monoid homomorphism.
\end{prop}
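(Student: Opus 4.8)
The plan is to follow exactly the template used for Proposition~\ref{s3.3-1}, since the present statement stands to Proposition~\ref{s3.2-1-2} in precisely the way that Proposition~\ref{s3.3-1} stands to Proposition~\ref{s3.2}; indeed the surrounding discussion has already assembled every ingredient. The entire substance of the argument is carried by Proposition~\ref{s3.2-1-2}, which guarantees that $\sim_{\!\mathcal{M}}$ is a congruence on $X$ with respect to both $\left(x,y\right)\mapsto xy$ and $\left(\lambda,x\right)\mapsto\lambda\cdot x$. My first step would be to invoke this fact and to observe that the congruence property is exactly the assertion that the stipulated operations $[x]_{\mathcal{M}}[y]_{\mathcal{M}}=[xy]_{\mathcal{M}}$ and $\lambda\cdot[x]_{\mathcal{M}}=[\lambda\cdot x]_{\mathcal{M}}$ do not depend on the chosen representatives, so that they are well defined on $X/\mathcal{M}$; together with the designated identity $1_{X/\mathcal{M}}=[1_{X}]_{\mathcal{M}}$ this equips $X/\mathcal{M}$ with the structure of a unital $R$-magma.

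Second, I would note that these definitions are chosen precisely so that the canonical surjection $\phi_{\mathcal{M}}\left(x\right)=[x]_{\mathcal{M}}$ satisfies $\phi_{\mathcal{M}}\left(xy\right)=\phi_{\mathcal{M}}\left(x\right)\phi_{\mathcal{M}}\left(y\right)$, $\phi_{\mathcal{M}}\left(\lambda\cdot x\right)=\lambda\cdot\phi_{\mathcal{M}}\left(x\right)$ and $\phi_{\mathcal{M}}\left(1_{X}\right)=1_{X/\mathcal{M}}$ by construction, making $\phi_{\mathcal{M}}$ a homomorphism of unital $R$-magmas, with surjectivity immediate since every class has a representative. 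Finally I would appeal to Birkhoff's theorem \cite{BIRK}: because the scalable monoids over $R$ form a variety within the unital $R$-magmas, and a homomorphic image of an algebra in a variety again lies in that variety, the image $X/\mathcal{M}$ of the scalable monoid $X$ is itself a scalable monoid over $R$. This identifies $\phi_{\mathcal{M}}$ as a scalable monoid homomorphism and completes the argument.

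As for the main obstacle, there is essentially none left once Proposition~\ref{s3.2-1-2} is in hand: all the delicate verification --- that distinct representatives of a class yield the same product and the same scalar multiple --- has been absorbed into the congruence claim proved there, and what remains is the purely formal passage from a congruence to its quotient algebra. The only point deserving attention is that the centrality hypothesis on $\mathcal{M}$ is genuinely needed and genuinely used; but this too has already been discharged inside Proposition~\ref{s3.2-1-2}, which carries the hypothesis forward, so here it enters only through the citation. In short, I expect the proof to be a three-line invocation rather than a calculation.
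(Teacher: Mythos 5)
Your proposal is correct and follows essentially the same route as the paper: the paper likewise uses Proposition~\ref{s3.2-1-2} to justify the well-definedness of the operations $[x]_{\mathcal{M}}[y]_{\mathcal{M}}=[xy]_{\mathcal{M}}$, $\lambda\cdot[x]_{\mathcal{M}}=[\lambda\cdot x]_{\mathcal{M}}$ and $1_{X/\mathcal{M}}=[1_{X}]_{\mathcal{M}}$, observes that $\phi_{\mathcal{M}}$ is then a surjective unital $R$-magma homomorphism by construction, and concludes via Birkhoff's theorem that $X/\mathcal{M}$ is an $R$-scaloid and $\phi_{\mathcal{M}}$ a scalable monoid homomorphism, exactly mirroring the passage from Proposition~\ref{s3.2} to Proposition~\ref{s3.3-1}.
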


\subsection{On congruences of the form $\sim_{\!\mathscr{M}}$}

Recall that a subalgebra of a scalable monoid $X$ is itself a scalable
monoid, namely, a submonoid $\mathscr{M}$ of $X$ such that $\lambda\cdot x\in\mathscr{M}$
for every $\lambda\in R$ and $x\in\mathscr{M}$; we call $\mathscr{M}$
a \emph{scalable submonoid} of $X$. A \emph{central} scalable submonoid
of $X$ is defined in the same way as a central submonoid of $X$.
\begin{defn}
\label{def5-1}Let $X$ be a scalable monoid and $\mathscr{M}$ a
scalable submonoid of $X$. Then $\sim_{\mathscr{M}}$ is the relation
on $X$ such that $x\sim_{\!\mathcal{\mathscr{M}}}y$ if and only
if $mx=ny$ for some $m,n\in\mathcal{\mathscr{M}}$. 
\end{defn}
The following two results correspond to Proposition \ref{s3.1-1}
and Corollary \ref{cor:ny3} above, and Proposition \ref{s3.1-1-1}
is proved as Proposition \ref{s3.1-1}.
\begin{prop}
\label{s3.1-1-1}If $X$ is a scalable monoid and $\mathcal{\mathcal{\mathscr{M}}}$
a commutative scalable submonoid of $X$ then $\sim_{\!\mathcal{\mathscr{M}}}$
is an equivalence relation on $X$. 
\end{prop}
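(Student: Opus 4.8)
The plan is to verify the three defining properties of an equivalence relation directly from Definition \ref{def5-1}, following exactly the template of the proof of Proposition \ref{s3.1-1}. The crucial observation is that the argument for $\sim_{\!\mathcal{M}}$ nowhere invokes the scaling action on the submonoid; it uses only that $\mathcal{M}$ is a commutative submonoid, i.e.\ that it contains $1_{X}$, is closed under multiplication, and is commutative. Since a commutative scalable submonoid $\mathscr{M}$ is in particular a commutative submonoid, the same reasoning transfers verbatim, with $\mathscr{M}$ in place of $\mathcal{M}$.

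Concretely, I would first establish reflexivity by noting that $1_{X}\in\mathscr{M}$, so the equation $1_{X}x=1_{X}x$ witnesses $x\sim_{\!\mathscr{M}}x$ for every $x\in X$. Symmetry is immediate from the symmetric form of the defining condition: if $mx=ny$ with $m,n\in\mathscr{M}$, then reading the equation as $ny=mx$ gives $y\sim_{\!\mathscr{M}}x$. For transitivity, I would assume $x\sim_{\!\mathscr{M}}y$ and $y\sim_{\!\mathscr{M}}z$, so that $mx=ny$ and $m'y=n'z$ for some $m,n,m',n'\in\mathscr{M}$, and then compute $m'mx=m'ny=nm'y=nn'z$, where the middle equality uses commutativity of $\mathscr{M}$. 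Since $\mathscr{M}$ is closed under multiplication, $m'm,nn'\in\mathscr{M}$, whence $x\sim_{\!\mathscr{M}}z$.

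I expect no genuine obstacle here. The only point requiring care—that commutativity is indispensable in the transitivity step, in order to rewrite $m'n$ as $nm'$ and thereby align the two multipliers on the right-hand side—is already present in Proposition \ref{s3.1-1}, and the scalable structure of $\mathscr{M}$ plays no role whatsoever in the verification. This is precisely why the excerpt can assert that Proposition \ref{s3.1-1-1} ``is proved as Proposition \ref{s3.1-1}.''
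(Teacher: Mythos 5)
Your proof is correct and matches the paper exactly: the paper itself states that Proposition \ref{s3.1-1-1} ``is proved as Proposition \ref{s3.1-1}'', and your verification of reflexivity (via $1_{X}\in\mathscr{M}$), symmetry, and transitivity (via $m'mx=m'ny=nm'y=nn'z$, using commutativity and closure of $\mathscr{M}$) is precisely that argument with $\mathscr{M}$ in place of $\mathcal{M}$. Your observation that the scaling action on $\mathscr{M}$ plays no role is also exactly the point the paper is implicitly relying on.
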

\begin{cor}
\label{cor:ny3-1}If $X$ is a scalable monoid and $\mathcal{\mathscr{M}}$
a central scalable submonoid of $X$ then $\sim_{\!\mathcal{\mathcal{\mathscr{M}}}}$
is an equivalence relation on $X$.
\end{cor}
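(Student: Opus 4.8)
The plan is to reduce this corollary to the immediately preceding Proposition \ref{s3.1-1-1} by verifying that the centrality hypothesis is strictly stronger than the commutativity hypothesis needed there. This mirrors exactly the way Corollary \ref{cor:ny3} is obtained from Proposition \ref{s3.1-1}: the only work is to observe that ``central'' subsumes ``commutative'' for a scalable submonoid, after which the equivalence-relation claim is already available.

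Concretely, I would first unfold the definition of a central scalable submonoid. By definition $\mathscr{M}$ is a scalable submonoid of $X$ with $\mathscr{M}\subseteq Z(X)$, so every $m\in\mathscr{M}$ commutes with \emph{every} element of $X$. In particular, for any $m,n\in\mathscr{M}$ we have $mn=nm$, since $n\in X$. Hence $\mathscr{M}$ is a commutative scalable submonoid of $X$. Having established this, the second and final step is simply to invoke Proposition \ref{s3.1-1-1}, which asserts that $\sim_{\!\mathscr{M}}$ is an equivalence relation whenever $\mathscr{M}$ is a commutative scalable submonoid of $X$; this yields the conclusion immediately.

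I do not anticipate any genuine obstacle here, as the argument is purely definitional. The only point that merits a line of care is making explicit that the inclusion $\mathscr{M}\subseteq Z(X)$ forces commutativity \emph{within} $\mathscr{M}$ (not merely between $\mathscr{M}$ and $X$), which is what Proposition \ref{s3.1-1-1} requires. Once that observation is recorded, no reflexivity, symmetry, or transitivity computation need be repeated, since those were already carried out in the proof of Proposition \ref{s3.1-1} and transferred to Proposition \ref{s3.1-1-1}.
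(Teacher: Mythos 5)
Your proposal is correct and is exactly the paper's (implicit) argument: the paper presents this as a corollary of Proposition \ref{s3.1-1-1}, relying on the same observation that $\mathscr{M}\subseteq Z(X)$ forces $mn=nm$ for all $m,n\in\mathscr{M}$, so a central scalable submonoid is in particular a commutative one. Your extra care in noting that centrality gives commutativity \emph{within} $\mathscr{M}$ is precisely the one definitional point the reduction needs.
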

The following result is proved as Proposition \ref{s3.2-1-2} above.
\begin{prop}
\label{s3.2-1-2-1}If $X$ is a scalable monoid and $\mathcal{\mathscr{M}}$
a central scalable submonoid of $X$ then the relation $\sim_{\!\mathcal{\mathscr{M}}}$
is a congruence on $X$ with regard to the operations $\left(x,y\right)\mapsto xy$
and $\left(\lambda,x\right)\mapsto\lambda\cdot x$.
\end{prop}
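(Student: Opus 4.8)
The plan is to verify the two congruence conditions directly, following the proof of Proposition \ref{s3.2-1-2} essentially verbatim. The key observation is that a central scalable submonoid is in particular a central submonoid, and that the proof of Proposition \ref{s3.2-1-2} never uses closure of the submonoid under the scaling action --- only its centrality and its closure under multiplication. Hence the same argument carries over with $\mathscr{M}$ in place of $\mathcal{M}$.

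For compatibility with multiplication, I would begin from $x\sim_{\!\mathscr{M}}x'$ and $y\sim_{\!\mathscr{M}}y'$, so that by Definition \ref{def5-1} there are $m,n,m',n'\in\mathscr{M}$ with $mx=nx'$ and $m'y=n'y'$. Multiplying the two equalities gives $(mx)(m'y)=(nx')(n'y')$; here centrality of $\mathscr{M}$ lets me commute $m'$ past $x$ and $n'$ past $x'$, rewriting both sides to obtain $(mm')(xy)=(nn')(x'y')$. Since $\mathscr{M}$ is a submonoid we have $mm',nn'\in\mathscr{M}$, whence $xy\sim_{\!\mathscr{M}}x'y'$. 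For compatibility with scaling, starting from $mx=nx'$ I would apply the scaling action on both sides, $\lambda\cdot(mx)=\lambda\cdot(nx')$, and then use condition (3) of Definition \ref{thm:def1} to pull the scalar through the product, giving $m(\lambda\cdot x)=n(\lambda\cdot x')$ and hence $\lambda\cdot x\sim_{\!\mathscr{M}}\lambda\cdot x'$.

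I do not expect a genuine obstacle: the statement is a restatement of Proposition \ref{s3.2-1-2} under the formally stronger hypothesis that $\mathscr{M}$ is a scalable (rather than merely a plain) submonoid, and the author signals exactly this by writing that the result is ``proved as Proposition \ref{s3.2-1-2} above''. The only subtlety worth flagging is that it is centrality, not closure under the scaling action, that does the work in the multiplicative step; the extra scalability of $\mathscr{M}$ is not needed for the congruence property itself.
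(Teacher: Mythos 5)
Your proof is correct and is essentially identical to the paper's: the paper simply states that this proposition ``is proved as Proposition \ref{s3.2-1-2} above,'' i.e.\ by repeating verbatim the argument you give (multiply $mx=nx'$ and $m'y=n'y'$ and use centrality; apply $\lambda\cdot$ and use Definition \ref{thm:def1}(3)). Your added observation---that only centrality and closure under multiplication are used, so one may equivalently note that a central scalable submonoid is in particular a central submonoid and invoke Proposition \ref{s3.2-1-2} directly---is also accurate.
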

It is clear that we can define $\left[x\right]_{\mathscr{M}}$, $X/\mathscr{M}$,
$\left[x\right]_{\mathscr{M}}\left[y\right]_{\mathscr{M}}$, $\lambda\cdot\left[x\right]_{\mathscr{M}}$
and $1_{X/\mathcal{\mathscr{M}}}$ by modifying the corresponding
definitions in Section \ref{s14}.

Note that for any central scalable submonoid $\mathscr{M}$, we have
$\left[\lambda\cdot x\right]_{\mathscr{M}}=\left[x\right]_{\mathscr{M}}$
since $\boldsymbol{1}\left(\lambda\cdot x\right)=\lambda\cdot x=\lambda\cdot\boldsymbol{1}x=\left(\lambda\cdot\boldsymbol{1}\right)x$,
where $\boldsymbol{1},\lambda\cdot\boldsymbol{1}\in\mathscr{M}$,
so that $\left(\lambda\cdot x\right)\sim_{\!\mathscr{M}}x$ for any
$x\in X$. Hence, $\lambda\cdot\left[x\right]_{\mathscr{M}}=\left[\lambda\cdot x\right]_{\mathscr{M}}=\left[x\right]_{\mathscr{M}}$
for any $\lambda\in R$ and $\left[x\right]_{\mathscr{M}}\in X/\mathscr{M}$,
so while $X/\mathcal{M}$ is a scalable monoid, $X/\mathscr{M}$ is
a (trivially scalable) monoid.

Furthermore, if $\alpha\cdot x=\beta\cdot y$ for some $\alpha,\beta\in R$
then $\left(\alpha\cdot\boldsymbol{1}\right)x=\alpha\cdot\boldsymbol{1}x=\beta\cdot\boldsymbol{1}y=\left(\beta\cdot\boldsymbol{1}\right)y$.
Thus $x\sim y$ implies $x\sim_{\!\mathscr{M}\!}y$ for any central
scalable submonoid $\mathscr{M}$ of $X$ since $\lambda\cdot\boldsymbol{1}\in\mathscr{M}$
for any $\lambda\in R$ and any such $\mathscr{M}$. Conversely, note
that $R\cdot\boldsymbol{1}$ is a central scalable submonoid of $X$
and if $x\sim_{R\,\cdot\boldsymbol{1}}y$ then $\alpha\cdot\boldsymbol{1}x=\left(\alpha\cdot\boldsymbol{1}\right)x=\left(\beta\cdot\boldsymbol{1}\right)y=\beta\cdot\boldsymbol{1}y$
for some $\alpha,\beta\in R$, so $x\sim_{R\,\cdot\boldsymbol{1}}y$
implies $x\sim y$. Thus, $x\sim_{R\,\cdot\boldsymbol{1}}y$ if and
only if $x\sim y$, so $x\sim_{\!\mathscr{M}}y$ generalizes $x\sim y$.

\section{\label{s16}Direct and tensor products of scalable monoids}

Let $X$ and $Y$ be scalable monoids. The \emph{direct product} of
$X$ and $Y$, denoted $X\boxtimes Y$, is the set $X\times Y$ equipped
with the binary operation
\begin{gather*}
\ast:\left(X\times Y\right)\times\left(X\times Y\right)\rightarrow X\times Y,\\
\left(\left\langle x_{1},y_{1}\right\rangle ,\left\langle x_{2},y_{2}\right\rangle \right)\mapsto\left\langle x_{1},y_{1}\right\rangle \left\langle x_{2},y_{2}\right\rangle :=\left\langle x_{1}x_{2},y_{1}y_{2}\right\rangle 
\end{gather*}
and the external binary operation
\[
\cdot:R\times\left(X\times Y\right)\rightarrow X\times Y,\qquad\left(\lambda,\left\langle x,y\right\rangle \right)\mapsto\lambda\cdot\left\langle x,y\right\rangle :=\left\langle \lambda\cdot x,\lambda\cdot y\right\rangle .
\]
Straight-forward calculations show that $X\boxtimes Y$ is a scalable
monoid with $\ast$ as monoid multiplication, $\cdot$ as scaling
action and $\left\langle 1_{X},1_{Y}\right\rangle $ as identity element.

The direct product of scalable monoids is a generic product, applicable
to any universal algebra. Another kind of product, which exploits
the fact that $\left(\lambda\cdot x\right)y=\lambda\cdot xy=x\left(\lambda\cdot y\right)$
in scalable monoids, namely the \emph{tensor product,} turns out to
be more useful in many cases.
\begin{defn}
Given scalable monoids $X$ and $Y$ over $R$, let $\backsim_{\otimes}$
be the binary relation on $X\times Y$ such that $\left(x_{1},y_{1}\right)\backsim_{\otimes}\left(x_{2},y_{2}\right)$
if and only if $\left(\alpha\cdot x_{1},\beta\cdot y_{1}\right)=\left(\beta\cdot x_{2},\alpha\cdot y_{2}\right)$
for some $\alpha,\beta\in R$.
\end{defn}
\begin{prop}
Let $X$ and $Y$ be scalable monoids over $R$. Then $\backsim_{\otimes}$
is an equivalence relation on $X\times Y$. 
\end{prop}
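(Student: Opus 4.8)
The plan is to verify the three defining properties of an equivalence relation, leaning throughout on Lemma~\ref{thm:lem1}, which tells us that the scaling operators commute: $\mu\nu\cdot z=\nu\mu\cdot z$ for all $\mu,\nu\in R$ and all $z$ in either monoid. Reflexivity and symmetry are immediate. For reflexivity I would take $\alpha=\beta=1$, so that the required equalities $1\cdot x=1\cdot x$ and $1\cdot y=1\cdot y$ hold by condition~(1) of Definition~\ref{thm:def1}. Symmetry is essentially built into the definition: if $\alpha\cdot x_{1}=\beta\cdot x_{2}$ and $\beta\cdot y_{1}=\alpha\cdot y_{2}$ witness $(x_{1},y_{1})\backsim_{\otimes}(x_{2},y_{2})$, then reading these very same two equations with the pair $(\beta,\alpha)$ playing the role of $(\alpha,\beta)$ shows $(x_{2},y_{2})\backsim_{\otimes}(x_{1},y_{1})$. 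The cross-swap of $\alpha$ and $\beta$ between the two coordinates in the definition is exactly what makes symmetry come for free.

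The only real work, and the step I expect to be the main obstacle, is transitivity. Suppose $(x_{1},y_{1})\backsim_{\otimes}(x_{2},y_{2})$ via $\alpha,\beta$ and $(x_{2},y_{2})\backsim_{\otimes}(x_{3},y_{3})$ via $\gamma,\delta$, so that
\[
\alpha\cdot x_{1}=\beta\cdot x_{2},\quad \beta\cdot y_{1}=\alpha\cdot y_{2},\quad \gamma\cdot x_{2}=\delta\cdot x_{3},\quad \delta\cdot y_{2}=\gamma\cdot y_{3}.
\]
The strategy is to eliminate the middle pair $(x_{2},y_{2})$ by applying suitable scalars and then reorganizing the resulting scalar products using Lemma~\ref{thm:lem1}. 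Concretely, I would scale the first equation by $\gamma$ and splice it through the third to reach $\gamma\alpha\cdot x_{1}=\beta\delta\cdot x_{3}$, and scale the second equation by $\delta$ and splice it through the fourth to reach $\beta\delta\cdot y_{1}=\gamma\alpha\cdot y_{3}$. In each splicing step Lemma~\ref{thm:lem1} is what lets me rewrite a product such as $\gamma\beta$ as $\beta\gamma$, or $\delta\alpha$ as $\alpha\delta$, so that the newly introduced scalar can be pulled outside and the inner factor matched against the relation for the middle pair.

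Setting $\mu=\gamma\alpha$ and $\nu=\beta\delta$, the two displayed identities read $\mu\cdot x_{1}=\nu\cdot x_{3}$ and $\nu\cdot y_{1}=\mu\cdot y_{3}$, which is precisely the witness that $(x_{1},y_{1})\backsim_{\otimes}(x_{3},y_{3})$. The subtlety to keep in mind is that $R$ need \emph{not} be commutative, so one may never simply reorder factors in $R$; every reordering must instead be justified by the operator identity $\mu\nu\cdot z=\nu\mu\cdot z$ from Lemma~\ref{thm:lem1}. It is exactly this commutation of the \emph{action} (as opposed to commutativity of the ring) that makes the single choice $\mu=\gamma\alpha$, $\nu=\beta\delta$ work simultaneously in both the $X$- and the $Y$-coordinate, and thereby closes the argument.
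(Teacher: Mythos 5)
Your proposal is correct and follows essentially the same route as the paper: reflexivity via $\alpha=\beta=1$, symmetry by construction from the cross-swapped form of the definition, and transitivity by scaling the two witness equations with the other pair's scalars and commuting the scaling operators via Lemma~\ref{thm:lem1} (the paper's witnesses are $\gamma\alpha$ and $\delta\beta$, yours $\gamma\alpha$ and $\beta\delta$, which agree as operators by that same lemma). Your emphasis that it is commutativity of the action, not of $R$, that closes the argument is exactly the point of the paper's appeal to Lemma~\ref{thm:lem1}.
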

\begin{proof}
$\backsim_{\otimes}$ is reflexive since $\left(1\cdot x,1\cdot y\right)=\left(1\cdot x,1\cdot y\right)$,
and symmetric by construction. If $\left(\alpha\cdot x_{1},\beta\cdot y_{1}\right)=\left(\beta\cdot x_{2},\alpha\cdot y_{2}\right)$
and $\left(\gamma\cdot x_{2},\delta\cdot y_{2}\right)=\left(\delta\cdot x_{3},\gamma\cdot y_{3}\right)$
then 
\begin{gather*}
\left(\gamma\cdot\left(\alpha\cdot x_{1}\right),\delta\cdot\left(\beta\cdot y_{1}\right)\right)=\left(\gamma\cdot\left(\beta\cdot x_{2}\right),\delta\cdot\left(\alpha\cdot y_{2}\right)\right),\\
\left(\beta\cdot\left(\gamma\cdot x_{2}\right),\alpha\cdot\left(\delta\cdot y_{2}\right)\right)=\left(\beta\cdot\left(\delta\cdot x_{3}\right),\alpha\cdot\left(\gamma\cdot y_{3}\right)\right).
\end{gather*}
Thus, we have
\begin{gather*}
\left(\gamma\alpha\cdot x_{1},\delta\beta\cdot y_{1}\right)=\left(\gamma\beta\cdot x_{2},\delta\alpha\cdot y_{2}\right)=\left(\beta\gamma\cdot x_{2},\alpha\delta\cdot y_{2}\right)=\\
\left(\beta\delta\cdot x_{3},\alpha\gamma\cdot y_{3}\right)=\left(\delta\beta\cdot x_{3},\gamma\alpha\cdot y_{3}\right),
\end{gather*}
where $\gamma\alpha,\delta\beta\in R$, so $\backsim_{\otimes}$ is
transitive as well.
\end{proof}
For any $x\in X$ and $y\in Y$, let $x\otimes y$ denote the equivalence
class 
\[
\left\{ \left(s,t\right)\mid\left(s,t\right)\in X\times Y,\left(s,t\right)\backsim_{\otimes}\left(x,y\right)\right\} ,
\]
and let $X\otimes Y$ denote the set $\left\{ x\otimes y\mid x\in X,y\in Y\right\} $.
\begin{prop}
Let $X,Y$ be scalable monoids over $R$, $x\in X$ and $y\in X$.
Then $\left(\lambda\cdot x\right)\otimes y=x\otimes\left(\lambda\cdot y\right)$
for every $\lambda\in R$.
\end{prop}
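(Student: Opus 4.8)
The plan is to translate the assertion about tensor classes into a single instance of the defining relation $\backsim_{\otimes}$ and then exhibit explicit witnessing scalars. Recall that $x\otimes y$ is by definition the $\backsim_{\otimes}$-equivalence class of the pair $(x,y)$. Hence the claimed identity $(\lambda\cdot x)\otimes y=x\otimes(\lambda\cdot y)$ is literally the statement that these two pairs lie in the same class, i.e.\ that $(\lambda\cdot x,y)\backsim_{\otimes}(x,\lambda\cdot y)$. So the first step is simply to unwind the notation and reduce the proposition to verifying this one relation.

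Next I would invoke the definition of $\backsim_{\otimes}$: the relation $(\lambda\cdot x,y)\backsim_{\otimes}(x,\lambda\cdot y)$ holds precisely when there exist $\alpha,\beta\in R$ such that
\[
\left(\alpha\cdot\left(\lambda\cdot x\right),\,\beta\cdot y\right)=\left(\beta\cdot x,\,\alpha\cdot\left(\lambda\cdot y\right)\right).
\]
So the task becomes one of producing a suitable pair $\alpha,\beta$.

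The witnesses I would take are $\alpha=1$ and $\beta=\lambda$. With this choice, axiom (1) of Definition \ref{thm:def1} gives $1\cdot(\lambda\cdot x)=\lambda\cdot x$ and $1\cdot(\lambda\cdot y)=\lambda\cdot y$, while the left entries $\beta\cdot x=\lambda\cdot x$ and the right entries $\beta\cdot y=\lambda\cdot y$ match directly. Thus both sides of the displayed equation collapse to the single pair $(\lambda\cdot x,\lambda\cdot y)$, so the equality of pairs holds on the nose and the required relation is established.

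There is essentially no obstacle here: the whole content is the clever-but-elementary choice of scalars, and the verification is just an application of the scalable-monoid axioms (1) and (2). If anything, the only point worth stating carefully is that equality of the two $\backsim_{\otimes}$-classes is \emph{equivalent} to, not merely implied by, the single relation $(\lambda\cdot x,y)\backsim_{\otimes}(x,\lambda\cdot y)$, which is immediate since $\backsim_{\otimes}$ is an equivalence relation and equivalence classes coincide exactly when their representatives are related.
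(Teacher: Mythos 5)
Your proof is correct and matches the paper's own argument: both choose the witnesses $\alpha=1$, $\beta=\lambda$, so that both pairs in the defining condition for $\backsim_{\otimes}$ collapse to $\left(\lambda\cdot x,\lambda\cdot y\right)$. The only cosmetic difference is that you spell out the equivalence-class bookkeeping explicitly (and cite axiom (2), which is not actually needed—axiom (1) suffices), while the paper states the key equality of pairs in one line.
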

\begin{proof}
We have $\left(1\cdot\left(\lambda\cdot x\right),\lambda\cdot y\right)=\left(\lambda\cdot x,1\cdot\left(\lambda\cdot y\right)\right)$,
so $\left(\lambda\cdot x,y\right)\backsim_{\otimes}\left(x,\lambda\cdot y\right)$,
meaning that $\left(\lambda\cdot x\right)\otimes y=x\otimes\left(\lambda\cdot y\right)$.
\end{proof}
\begin{prop}
Let $X,Y$ be scalable monoids over $R$, and set $\left(x_{1}\otimes y_{1}\right)\left(x_{2}\otimes y_{2}\right)=x_{1}x_{2}\otimes y_{1}y_{2}$
and $\lambda\cdot x\otimes y=\left(\lambda\cdot x\right)\otimes y$.
Also set $1_{X\otimes Y}=1_{X}\otimes1_{Y}$. With these definitions,
$X\otimes Y$ is a scalable monoid over $R$.
\end{prop}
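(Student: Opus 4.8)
The plan is to treat well-definedness as the heart of the matter. Since $x\otimes y$ denotes a $\backsim_{\otimes}$-equivalence class, the three assignments $(x_1\otimes y_1)(x_2\otimes y_2)=x_1x_2\otimes y_1y_2$, $\lambda\cdot(x\otimes y)=(\lambda\cdot x)\otimes y$ and $1_{X\otimes Y}=1_X\otimes 1_Y$ must be shown to be independent of the chosen representatives before any axiom is checked. Once that is settled, associativity, the identity laws, and conditions (1)--(3) of Definition \ref{thm:def1} will follow by computing on representatives and invoking the corresponding properties of $X$ and $Y$ together with Lemma \ref{thm:lem1}.

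First I would verify that multiplication is well defined. Suppose $(x_1,y_1)\backsim_{\otimes}(x_1',y_1')$ and $(x_2,y_2)\backsim_{\otimes}(x_2',y_2')$, so that there are $\alpha_1,\beta_1,\alpha_2,\beta_2\in R$ with $\alpha_i\cdot x_i=\beta_i\cdot x_i'$ and $\beta_i\cdot y_i=\alpha_i\cdot y_i'$ for $i=1,2$. Choosing the scalars $\alpha_1\alpha_2$ and $\beta_1\beta_2$ and using Lemma \ref{thm:lem1} to combine scaled products, one computes
\[
\alpha_1\alpha_2\cdot x_1x_2=(\alpha_1\cdot x_1)(\alpha_2\cdot x_2)=(\beta_1\cdot x_1')(\beta_2\cdot x_2')=\beta_1\beta_2\cdot x_1'x_2',
\]
and symmetrically $\beta_1\beta_2\cdot y_1y_2=\alpha_1\alpha_2\cdot y_1'y_2'$, which is precisely $(x_1x_2,y_1y_2)\backsim_{\otimes}(x_1'x_2',y_1'y_2')$. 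No commutativity of $R$ is needed here, since Lemma \ref{thm:lem1} supplies $(\alpha\cdot x)(\beta\cdot y)=\alpha\beta\cdot xy$ with the factors kept in the given order.

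Next I would verify that the scaling action is well defined. If $(x,y)\backsim_{\otimes}(x',y')$ is witnessed by $\alpha\cdot x=\beta\cdot x'$ and $\beta\cdot y=\alpha\cdot y'$, then the same pair $\alpha,\beta$ witnesses $(\lambda\cdot x,y)\backsim_{\otimes}(\lambda\cdot x',y')$: the second-coordinate condition $\beta\cdot y=\alpha\cdot y'$ is unchanged, while for the first I would invoke the part of Lemma \ref{thm:lem1} asserting that scalars commute in the action, giving $\alpha\cdot(\lambda\cdot x)=\lambda\cdot(\alpha\cdot x)=\lambda\cdot(\beta\cdot x')=\beta\cdot(\lambda\cdot x')$. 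This is the one place where Lemma \ref{thm:lem1} does essential work beyond bookkeeping, and I expect it to be the main obstacle for a noncommutative $R$; the preceding proposition, that $(\lambda\cdot x)\otimes y=x\otimes(\lambda\cdot y)$, then shows the action is insensitive to which factor carries the scalar.

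Finally, with the operations well defined, the remaining verifications are routine and carried out on representatives. Associativity and the identity laws for $\ast$ descend coordinatewise from those of $X$ and $Y$ (for instance $(1_X\otimes 1_Y)(x\otimes y)=1_Xx\otimes 1_Yy=x\otimes y$). For Definition \ref{thm:def1}, condition (1) reads $1\cdot(x\otimes y)=(1\cdot x)\otimes y=x\otimes y$; condition (2) follows from $\alpha\cdot(\beta\cdot(x\otimes y))=(\alpha\cdot(\beta\cdot x))\otimes y=(\alpha\beta\cdot x)\otimes y$; and condition (3) follows from $\alpha\cdot\bigl((x_1\otimes y_1)(x_2\otimes y_2)\bigr)=(\alpha\cdot x_1x_2)\otimes y_1y_2=((\alpha\cdot x_1)x_2)\otimes y_1y_2$, where the two ways of distributing the scalar provided by condition (3) in $X$ yield both $(\alpha\cdot(x_1\otimes y_1))(x_2\otimes y_2)$ and $(x_1\otimes y_1)(\alpha\cdot(x_2\otimes y_2))$, completing the proof that $X\otimes Y$ is a scalable monoid over $R$.
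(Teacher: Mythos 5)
Your proof is correct, and it is in one respect more complete than the paper's own. The paper's proof consists only of the representative computations you relegate to your final paragraph: the monoid laws, conditions (1)--(2) of Definition \ref{thm:def1}, and condition (3), the last of which the paper verifies for right-distribution by first moving the scalar to the $Y$-side via the preceding proposition (that $\left(\lambda\cdot x\right)\otimes y=x\otimes\left(\lambda\cdot y\right)$) and then applying condition (3) in $Y$, whereas you apply condition (3) in $X$ directly to $x_{1}\left(\alpha\cdot x_{2}\right)=\alpha\cdot x_{1}x_{2}$; both routes work. What the paper never does is check that the operations are well defined on $\backsim_{\otimes}$-classes, and this is precisely what you place at the center: since $\left(x_{1}\otimes y_{1}\right)\left(x_{2}\otimes y_{2}\right):=x_{1}x_{2}\otimes y_{1}y_{2}$ and $\lambda\cdot\left(x\otimes y\right):=\left(\lambda\cdot x\right)\otimes y$ are stated in terms of representatives, independence of the choice of representatives must be proved. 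Your verifications are correct and economical: for multiplication, $\alpha_{1}\alpha_{2}\cdot x_{1}x_{2}=\left(\alpha_{1}\cdot x_{1}\right)\left(\alpha_{2}\cdot x_{2}\right)=\left(\beta_{1}\cdot x_{1}'\right)\left(\beta_{2}\cdot x_{2}'\right)=\beta_{1}\beta_{2}\cdot x_{1}'x_{2}'$ together with the symmetric identity on the $Y$-side exhibits the witnesses $\alpha_{1}\alpha_{2},\beta_{1}\beta_{2}$ with the crossing of scalars that the relation $\backsim_{\otimes}$ requires; for scaling, the chain $\alpha\cdot\left(\lambda\cdot x\right)=\lambda\cdot\left(\alpha\cdot x\right)=\lambda\cdot\left(\beta\cdot x'\right)=\beta\cdot\left(\lambda\cdot x'\right)$ reuses the original witnesses, and you correctly identify the scalar-commutation part of Lemma \ref{thm:lem1} as the essential ingredient, with no commutativity assumption on $R$. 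So your write-up fills a genuine omission in the paper's argument; the only cost is length, since the axiom checks themselves coincide with the paper's.
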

\begin{proof}
$X\otimes Y$ is a monoid since 
\begin{gather*}
\left(1_{\!X}\otimes1_{\!Y}\right)\left(x\otimes y\right)=1_{\!X}x\otimes1_{\!Y}y=x\otimes y=x1_{\!X}\otimes y1_{\!Y}=\left(x\otimes y\right)\left(1_{\!X}\otimes1_{\!Y}\right),\\
\left(\left(x_{1}\otimes y_{1}\right)\left(x_{2}\otimes y_{2}\right)\right)\left(x_{3}\otimes y_{3}\right)=\left(x_{1}x_{2}\otimes y_{1}y_{2}\right)\left(x_{3}\otimes y_{3}\right)=\left(x_{1}x_{2}\right)x_{3}\otimes\left(y_{1}y_{2}\right)y_{3}=\\
x_{1}\left(x_{2}x_{3}\right)\otimes y_{1}\left(y_{2}y_{3}\right)=\left(x_{1}\otimes y_{1}\right)\left(x_{2}x_{3}\otimes y_{2}y_{3}\right)=\left(x_{1}\otimes y_{1}\right)\left(\left(x_{2}\otimes y_{2}\right)\left(x_{3}\otimes y_{3}\right)\right).
\end{gather*}
Furthermore, 
\begin{gather*}
1\cdot x\otimes y=\left(1\cdot x\right)\otimes y=x\otimes y,\\
\alpha\cdot\left(\beta\cdot x\otimes y\right)=\alpha\cdot\left(\beta\cdot x\right)\otimes y=\left(\alpha\cdot\left(\beta\cdot x\right)\right)\otimes y=\left(\alpha\beta\cdot x\right)\otimes y=\alpha\beta\cdot x\otimes y,\\
\lambda\cdot\left(x_{1}\otimes y_{1}\right)\left(x_{2}\otimes y_{2}\right)=\lambda\cdot x_{1}x_{2}\otimes y_{1}y_{2}=\left(\lambda\cdot x_{1}x_{2}\right)\otimes y_{1}y_{2}=\\
\left(\lambda\cdot x_{1}\right)x_{2}\otimes y_{1}y_{2}=\left(\left(\lambda\cdot x_{1}\right)\otimes y_{1}\right)\left(x_{2}\otimes y_{2}\right)=\left(\lambda\cdot x_{1}\otimes y_{1}\right)\left(x_{2}\otimes y_{2}\right),\\
\lambda\cdot\left(x_{1}\otimes y_{1}\right)\left(x_{2}\otimes y_{2}\right)=\lambda\cdot x_{1}x_{2}\otimes y_{1}y_{2}=x_{1}x_{2}\otimes\left(\lambda\cdot y_{1}y_{2}\right)=\\
x_{1}x_{2}\otimes y_{1}\left(\lambda\cdot y_{2}\right)=\left(x_{1}\otimes y_{1}\right)\left(x_{2}\otimes\left(\lambda\cdot y_{2}\right)\right)=\left(x_{1}\otimes y_{1}\right)\left(\lambda\cdot x_{2}\otimes y_{2}\right),
\end{gather*}
so $X\otimes Y$ is a scalable monoid.
\end{proof}
\begin{cor}
If $X,Y,Z$ are scalable monoids over $R$ then $\left(X\otimes Y\right)\otimes Z$
and\linebreak{}
 $X\otimes\left(Y\otimes Z\right)$ are scalable monoids over $R$.
\end{cor}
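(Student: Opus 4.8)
The plan is to obtain this as an immediate consequence of the preceding Proposition by applying it twice. That Proposition establishes that for \emph{any} two scalable monoids $A$ and $B$ over $R$, the set $A\otimes B$, equipped with the multiplication $\left(a_{1}\otimes b_{1}\right)\left(a_{2}\otimes b_{2}\right)=a_{1}a_{2}\otimes b_{1}b_{2}$, the scaling action $\lambda\cdot a\otimes b=\left(\lambda\cdot a\right)\otimes b$, and the identity $1_{A}\otimes1_{B}$, is again a scalable monoid over $R$. The key point is that neither the statement nor the proof of that Proposition restricts the arguments $A,B$ to any special subclass of scalable monoids, so the binary tensor construction is closed on the class of scalable monoids over $R$ and may therefore be composed with itself.

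Concretely, I would first apply the Proposition to the pair $X,Y$ to conclude that $X\otimes Y$ is a scalable monoid over $R$. Applying the Proposition a second time, now to the pair $X\otimes Y$ and $Z$, yields that $\left(X\otimes Y\right)\otimes Z$ is a scalable monoid over $R$. Symmetrically, applying the Proposition to $Y,Z$ shows that $Y\otimes Z$ is a scalable monoid over $R$, and a further application to $X$ and $Y\otimes Z$ gives that $X\otimes\left(Y\otimes Z\right)$ is a scalable monoid over $R$.

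There is no genuine obstacle here; the only thing worth flagging is what the corollary does \emph{not} assert. It claims merely that both iterated tensor products are scalable monoids, not that they are isomorphic. Associativity of $\otimes$ up to isomorphism would require exhibiting a natural bijection between the two underlying sets that respects both the monoid multiplication and the scaling action, sending $\left(x\otimes y\right)\otimes z$ to $x\otimes\left(y\otimes z\right)$, which is a separate and more delicate matter not addressed by this statement. Thus the proof reduces entirely to the observation that the binary construction of the Proposition preserves the property of being a scalable monoid over $R$ and hence can be iterated in either bracketing.
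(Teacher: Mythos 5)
Your proof is correct and matches the paper's (implicit) reasoning exactly: the corollary follows by applying the preceding proposition twice, once to $X,Y$ (resp.\ $Y,Z$) and once to $X\otimes Y$ and $Z$ (resp.\ $X$ and $Y\otimes Z$), since the tensor construction is closed on the class of scalable monoids over $R$. Your remark that associativity up to isomorphism is a separate matter is also consistent with the paper, which defers that to the following proposition.
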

\begin{prop}
$\phi:\left(x\otimes y\right)\otimes z\mapsto x\otimes\left(y\otimes z\right)$
is a scalable monoid isomorphism $\left(X\otimes Y\right)\otimes Z\rightarrow X\otimes\left(Y\otimes Z\right)$.
\end{prop}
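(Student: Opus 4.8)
The plan is to verify in turn that $\phi$ is well defined, that it is a homomorphism of scalable monoids, and that it is a bijection; the first of these is the substantive step, while the other two reduce to routine representative-level computations from the definitions of the operations on the two triple products.

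For well-definedness I would first record the bookkeeping identity
\[
\lambda\mu\cdot(y\otimes z)=(\lambda\cdot y)\otimes(\mu\cdot z)\qquad(\lambda,\mu\in R,\ y\in Y,\ z\in Z),
\]
obtained from $\lambda\mu\cdot(y\otimes z)=\lambda\cdot(\mu\cdot(y\otimes z))=\lambda\cdot(y\otimes(\mu\cdot z))=(\lambda\cdot y)\otimes(\mu\cdot z)$ using the scaling axioms of the scalable monoid $Y\otimes Z$ together with the proposition $(\mu\cdot y)\otimes z=y\otimes(\mu\cdot z)$ proved just above. Now suppose $(x\otimes y)\otimes z=(x'\otimes y')\otimes z'$ in $(X\otimes Y)\otimes Z$. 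Unwinding the outer relation $\backsim_{\otimes}$ produces $\alpha,\beta\in R$ with $\alpha\cdot(x\otimes y)=\beta\cdot(x'\otimes y')$ in $X\otimes Y$ and $\beta\cdot z=\alpha\cdot z'$ in $Z$. Rewriting the first of these as $(\alpha\cdot x)\otimes y=(\beta\cdot x')\otimes y'$ and unwinding the inner relation produces $\gamma,\delta\in R$ with $\gamma\alpha\cdot x=\delta\beta\cdot x'$ and $\delta\cdot y=\gamma\cdot y'$, where Lemma \ref{thm:lem1} has been used to collapse the iterated scalings.

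I would then claim that the single pair $(\gamma\alpha,\delta\beta)$ witnesses $(x,\,y\otimes z)\backsim_{\otimes}(x',\,y'\otimes z')$. Its first-coordinate condition $\gamma\alpha\cdot x=\delta\beta\cdot x'$ is already in hand, and for the second coordinate the bookkeeping identity and the two relations just obtained give
\[
\delta\beta\cdot(y\otimes z)=(\delta\cdot y)\otimes(\beta\cdot z)=(\gamma\cdot y')\otimes(\alpha\cdot z')=\gamma\alpha\cdot(y'\otimes z').
\]
Hence $x\otimes(y\otimes z)=x'\otimes(y'\otimes z')$, so $\phi$ is well defined; notably, no commutativity of $R$ is needed anywhere in this argument. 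I expect this repackaging of the four scalars $\alpha,\beta,\gamma,\delta$ into one witnessing pair to be the main obstacle, precisely because each element of $(X\otimes Y)\otimes Z$ is an equivalence class whose representatives are themselves equivalence classes, so two nested layers of $\backsim_{\otimes}$ must be unwound before they can be recombined.

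It then remains to check that $\phi$ is a homomorphism and a bijection. Preservation of multiplication, scaling and identity follows by computing on representatives: for instance $((x_{1}\otimes y_{1})\otimes z_{1})((x_{2}\otimes y_{2})\otimes z_{2})=(x_{1}x_{2}\otimes y_{1}y_{2})\otimes z_{1}z_{2}$, whose image $x_{1}x_{2}\otimes(y_{1}y_{2}\otimes z_{1}z_{2})$ coincides with the product of the images, while $\phi$ sends $(1_{X}\otimes1_{Y})\otimes1_{Z}$ to $1_{X}\otimes(1_{Y}\otimes1_{Z})$ and commutes with scaling. For bijectivity I would exhibit the mirror-image assignment $\psi:x\otimes(y\otimes z)\mapsto(x\otimes y)\otimes z$, which is well defined by the identical argument with the two associations interchanged and is a two-sided inverse of $\phi$ at the level of representatives; therefore $\phi$ is a scalable monoid isomorphism.
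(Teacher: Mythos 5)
Your proposal is correct, and it differs from the paper's own proof in one substantive respect: completeness. The paper's proof consists exactly of your final paragraph: it verifies the multiplication, scaling and identity conditions on representatives and then exhibits the mirror map $\phi':x\otimes\left(y\otimes z\right)\mapsto\left(x\otimes y\right)\otimes z$ as a two-sided inverse; it never addresses well-definedness, which you rightly single out as the main obstacle. Since an element of $\left(X\otimes Y\right)\otimes Z$ is an equivalence class of pairs whose first coordinates are themselves equivalence classes, the same element generally admits many presentations $\left(x\otimes y\right)\otimes z=\left(x'\otimes y'\right)\otimes z'$ with distinct triples, and the defining formula for $\phi$ is only meaningful once one shows the image is independent of the presentation. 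Your two-stage unwinding of $\backsim_{\otimes}$ (the outer layer producing $\alpha,\beta$, the inner layer producing $\gamma,\delta$), the bookkeeping identity $\lambda\mu\cdot\left(y\otimes z\right)=\left(\lambda\cdot y\right)\otimes\left(\mu\cdot z\right)$, and the recombination into the single witnessing pair $\left(\gamma\alpha,\delta\beta\right)$ supply precisely this missing verification, and every step checks out against the definition of $\backsim_{\otimes}$; your observation that commutativity of $R$ is nowhere needed is also accurate, consistent with Lemma \ref{thm:lem1}, which shows $\alpha\beta\cdot x=\beta\alpha\cdot x$ holds automatically in any scalable monoid. What the paper's terser route buys is brevity, at the cost of leaving representative-independence implicit (as it also does when defining the operations on $X\otimes Y$ itself); what your route buys is an argument that is actually complete, with the symmetric well-definedness of $\phi'$ obtained for free by interchanging the two associations.
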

\begin{proof}
We have 
\begin{gather*}
\phi\left(\left(\left(x_{1}\otimes y_{1}\right)\otimes z_{1}\right)\left(\left(x_{2}\otimes y_{2}\right)\otimes z_{2}\right)\right)=\phi\left(\left(\left(x_{1}\otimes y_{1}\right)\left(x_{2}\otimes y_{2}\right)\right)\otimes z_{1}z_{2}\right)=\\
\phi\left(\left(x_{1}x_{2}\otimes y_{1}y_{2}\right)\otimes z_{1}z_{2}\right)=x_{1}x_{2}\otimes\left(y_{1}y_{2}\otimes z_{1}z_{2}\right)=x_{1}x_{2}\otimes\left(\left(y_{1}\otimes z_{1}\right)\left(y_{2}\otimes z_{2}\right)\right)=\\
\left(x_{1}\otimes\left(y_{1}\otimes z_{1}\right)\right)\left(x_{2}\otimes\left(y_{2}\otimes z_{2}\right)\right)=\phi\left(\left(x_{1}\otimes y_{1}\right)\otimes z_{1}\right)\phi\left(\left(x_{2}\otimes y_{2}\right)\otimes z_{2}\right)
\end{gather*}
 and
\begin{gather*}
\phi\left(\lambda\cdot\left(x\otimes y\right)\otimes z\right)=\phi\left(\left(x\otimes y\right)\otimes\left(\lambda\cdot z\right)\right)=x\otimes\left(y\otimes\left(\lambda\cdot z\right)\right)=\\
x\otimes\left(\lambda\cdot y\otimes z\right)=\lambda\cdot x\otimes\left(y\otimes z\right)=\lambda\cdot\phi\left(\left(x\otimes y\right)\otimes z\right).
\end{gather*}
Also, 
\[
\phi\left(1_{\left(X\otimes Y\right)\otimes Z}\right)=\phi\left(\left(1_{X}\otimes1_{Y}\right)\otimes1_{Z}\right)=1_{X}\otimes\left(1_{Y}\otimes1_{Z}\right)=1_{X\otimes\left(Y\otimes Z\right)}.
\]

Thus, $\phi$ is a scalable monoid homomorphism $\left(X\otimes Y\right)\otimes Z\rightarrow X\otimes\left(Y\otimes Z\right)$,
and similarly $\phi':x\otimes\left(y\otimes z\right)\mapsto\left(x\otimes y\right)\otimes z$
is a scalable monoid homomorphism $X\otimes\left(Y\otimes Z\right)\rightarrow\left(X\otimes Y\right)\otimes Z$
such that $\phi'\circ\phi=\mathrm{Id}_{\left(X\otimes Y\right)\otimes Z}$
and $\phi\circ\phi'=\mathrm{Id}_{X\otimes\left(Y\otimes Z\right)}$,
so $\phi$ is a scalable monoid isomorphism.
\end{proof}

\section{\label{sec:2-9}Orbit classes and unit elements}

\subsection{\label{subsec:91}Orbit classes as modules}

Recall the principle that magnitudes of the same kind can be added
and subtracted, whereas magnitudes of different kinds cannot be combined
by these operations. Also recall the idea that a quantity $q$ can
be represented by a ''unit'' $\left[q\right]$ and a number $\left\{ q\right\} $
specifying ''{[}how many{]} times the {[}unit{]} is to be taken in
order to make up'' the given quantity $q$ \cite{MAXW}. As shown
below, there is a connection between these two notions.

Specifically, it may happen that $R\cdot u\supseteq\left[u\right]$
for some $u\in\left[u\right]$, and if in addition a natural uniqueness
condition is satisfied we may regard $u$ as a unit of measurement
for $\left[u\right]$. If such a unit exists then a sum of magnitudes
in $\left[u\right]$ can be defined by the construction described
in Definition \ref{d3.3}.
\begin{defn}
Let $\mathsf{C}$ be an orbit class in a scalable monoid over $R$.
A \emph{generating element} for $\mathsf{C}$ is some $u\in\mathsf{C}$
such that for every $x\in\mathsf{C}$ there is some $\rho\in R$ such
that $x=\rho\cdot u$. A \emph{unit element for $\mathsf{C}$} is
a generating element $u$ for $\mathsf{C}$ such that if $\rho\cdot u=\rho'\cdot u$
then $\rho=\rho'$. 
\end{defn}
By this definition, if $u$ is a generating element for $\mathsf{C}$
then $R\cdot u\supseteq\mathsf{C}$, but recall that $R\cdot u\subseteq\left[u\right]$,
so actually $R\cdot u=\mathsf{C}=\left[u\right]$, since $u\in\left[u\right]$
and $u\in\mathsf{C}$ so that $\left[u\right]\cap\mathsf{C}\neq\emptyset$.
For any scalable monoid $X$ and $x,u\in X$, $\left[x\right]\subseteq R\cdot u$
implies $x\sim u$ since $1\cdot x=\rho\cdot u$ for some $\rho\in R$,
and $x\sim u$ implies $\left[x\right]=\left[u\right]\supseteq R\cdot u$.

A \emph{trivial} orbit class is an orbit class $\mathsf{C}$ such
that $\mathsf{C}=\left\{ 0_{\mathsf{C}}\right\} $. As $\lambda\cdot0_{\mathsf{C}}=0_{\mathsf{C}}$
for any $\lambda\in R$, a zero element cannot be a unit element for
a non-trivial orbit class. The existence of a non-zero unit for an
orbit class in a scalable monoid over $R$ has implications for $R$.
\begin{prop}
\label{p91}If there exists a unit element for some non-trivial orbit
class in a scalable monoid over $R$ then $R$ is non-trivial and
commutative.
\end{prop}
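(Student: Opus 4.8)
The plan is to establish the two conclusions separately, since they rest on different parts of the hypothesis. Throughout, write $u$ for the given unit element of the non-trivial orbit class $\mathsf{C}$.

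For the non-triviality of $R$, I would argue by contradiction and suppose $0 = 1$ in $R$. Then condition (1) of Definition \ref{thm:def1} gives $x = 1\cdot x = 0\cdot x$ for every $x \in X$, and $0\cdot x$ is precisely the zero element $0_{[x]}$ of the orbit class containing $x$, as established above. Hence every element of $X$ coincides with the zero element of its own orbit class; in particular every element of $\mathsf{C}$ equals $0_{\mathsf{C}}$, forcing $\mathsf{C} = \{0_{\mathsf{C}}\}$ and contradicting the assumption that $\mathsf{C}$ is non-trivial. Therefore $0 \neq 1$, i.e.\ $R$ is non-trivial. I would note that this step uses only the existence of a non-trivial orbit class, not the unit element itself.

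For commutativity, the unit element carries the argument. By Lemma \ref{thm:lem1}, applied with $x = u$, we have $\alpha\beta \cdot u = \beta\alpha \cdot u$ for all $\alpha, \beta \in R$. Both sides are scalar multiples of $u$, so the defining uniqueness clause of a unit element --- that $\rho \cdot u = \rho' \cdot u$ implies $\rho = \rho'$ --- immediately yields $\alpha\beta = \beta\alpha$. Since $\alpha, \beta$ were arbitrary, $R$ is commutative.

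I do not anticipate a serious obstacle: the proof is short once one recognizes that the uniqueness clause in the definition of a unit element is exactly the injectivity of the map $\rho \mapsto \rho\cdot u$, and that Lemma \ref{thm:lem1} already supplies the equality $\alpha\beta\cdot u = \beta\alpha\cdot u$ that is fed into it. The only point requiring care is to keep the two conclusions logically separate --- non-triviality of $R$ resting on non-triviality of $\mathsf{C}$, and commutativity resting on the uniqueness property of $u$ --- and to appreciate that the degenerate situation, in which a zero element would serve as a unit element of a one-point orbit class over a trivial ring, is precisely what the non-triviality hypothesis on $\mathsf{C}$ excludes.
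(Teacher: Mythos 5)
Your proof is correct and follows essentially the same route as the paper: commutativity comes from $\alpha\beta\cdot u=\beta\alpha\cdot u$ (Lemma \ref{thm:lem1}) combined with the uniqueness clause in the definition of a unit element, and non-triviality of $R$ comes from the fact that a non-trivial orbit class forces $0\cdot x\neq 1\cdot x$, hence $0\neq 1$. The paper phrases the latter directly as $0\cdot u\neq 1\cdot u$ rather than by contradiction, but the underlying idea is identical; your observation that this half needs only the non-trivial orbit class, not the unit element, is a correct and mildly sharper remark.
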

\begin{proof}
Let $u$ be the unit element and $0\cdot u$ the zero element. Then
$0\neq1$ since $0\cdot u\neq1\cdot u$. We also have $\alpha\beta\cdot u=\beta\alpha\cdot u$
for any $\alpha,\beta\in R$ by Lemma \ref{thm:lem1}.
\end{proof}
We now turn to an argument that leads to Proposition \ref{p5}.
\begin{prop}
Let $\mathsf{C}$ be an orbit class in a scalable monoid over $R$.
If $u$ and $u'$ are unit elements for $\mathsf{C}$, $\rho\cdot u=\rho'\cdot u'$
and $\sigma\cdot u=\sigma'\cdot u'$ then $\left(\rho+\sigma\right)\cdot u=\left(\rho'+\sigma'\right)\cdot u'$. 
\end{prop}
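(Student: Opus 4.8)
The plan is to exploit the uniqueness built into the notion of a unit element in order to convert the two given scalar equations into relations among the ring elements $\rho,\rho',\sigma,\sigma'$, and then to finish by invoking distributivity in $R$.

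First I would relate the two unit elements to each other. Since $u$ is in particular a generating element for $\mathsf{C}$ and $u'\in\mathsf{C}$, there is some $\tau\in R$ with $u'=\tau\cdot u$; because $u$ is a \emph{unit} element, this $\tau$ is the unique such scalar. Substituting $u'=\tau\cdot u$ into the hypothesis $\rho\cdot u=\rho'\cdot u'$ and using condition (2) of Definition \ref{thm:def1} gives $\rho\cdot u=\rho'\cdot(\tau\cdot u)=(\rho'\tau)\cdot u$. The uniqueness clause in the definition of a unit element (namely, $\kappa\cdot u=\kappa'\cdot u$ implies $\kappa=\kappa'$) then yields $\rho=\rho'\tau$. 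The identical argument applied to $\sigma\cdot u=\sigma'\cdot u'$ gives $\sigma=\sigma'\tau$.

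The conclusion now follows by a short computation. Using right distributivity in $R$, we have $\rho+\sigma=\rho'\tau+\sigma'\tau=(\rho'+\sigma')\tau$, so that
\[
(\rho+\sigma)\cdot u=\bigl((\rho'+\sigma')\tau\bigr)\cdot u=(\rho'+\sigma')\cdot(\tau\cdot u)=(\rho'+\sigma')\cdot u',
\]
again by condition (2) of Definition \ref{thm:def1}, which is exactly the desired identity.

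There is essentially no deep obstacle here; the whole argument rests on the single idea of passing through the change-of-unit scalar $\tau$. The one point requiring care is the appeal to uniqueness: it is precisely the defining property of a \emph{unit} (as opposed to a merely generating) element that lets me cancel $u$ and deduce $\rho=\rho'\tau$ from $\rho\cdot u=(\rho'\tau)\cdot u$, and this is really what the statement is about. Note that commutativity of $R$, though guaranteed by Proposition \ref{p91}, is not actually needed, since only the right-distributive law of $R$ is invoked.
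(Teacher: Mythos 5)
Your proof is correct and follows essentially the same route as the paper's: both introduce the change-of-unit scalar $\tau$ with $u'=\tau\cdot u$, use the uniqueness property of the unit element $u$ to cancel and obtain $\rho=\rho'\tau$ and $\sigma=\sigma'\tau$, and conclude by distributivity in $R$. The only difference is cosmetic (you compute from $(\rho+\sigma)\cdot u$ toward $(\rho'+\sigma')\cdot u'$ while the paper goes in the opposite direction), and your observation that only right distributivity, not commutativity, of $R$ is needed is a nice clarifying remark.
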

\begin{proof}
As $u'\in\mathsf{C}$, there is a unique $\tau\in R$ such that $u'=\tau\cdot u$.
Thus, 
\[
\left(\rho'+\sigma'\right)\cdot u'=\left(\rho'+\sigma'\right)\cdot\left(\tau\cdot u\right)=\left(\rho'+\sigma'\right)\tau\cdot u=\left(\rho'\tau+\sigma'\tau\right)\cdot u=\left(\rho+\sigma\right)\cdot u
\]
since $\rho\cdot u=\rho'\cdot u'=\rho'\cdot\left(\tau\cdot u\right)=\rho'\tau\cdot u$
and $\sigma\cdot u=\sigma'\cdot u'=\sigma'\cdot\left(\tau\cdot u\right)=\sigma'\tau\cdot u$,
so that $\rho=\rho'\tau$ and $\sigma=\sigma'\tau$.
\end{proof}
Hence, the sum of two elements of a scalable monoid can be defined
as follows.
\begin{defn}
\label{d3.3}Let $X$ be a scalable monoid over $R$, and let $u$
be a unit element for an orbit class $\mathsf{C}$. If $x=\rho\cdot u$
and $y=\sigma\cdot u$, we set 
\[
x+y=\left(\rho+\sigma\right)\cdot u.
\]
\end{defn}
Thus, if $x,y\in\mathsf{C}$ then $x+y=\left(\rho+\sigma\right)\cdot u\in R\cdot u=\mathsf{C}$,
and if $x\in\mathsf{C}$ then $\lambda\cdot x=\lambda\cdot\left(\rho\cdot u\right)=\lambda\rho\cdot u\in R\cdot u=\mathsf{C}$.
We note that the sum $x+y$ is given by Definition \ref{d3.3} if
and only if $x$ and $y$ are commensurable and their orbit class
has a unit element. This fact motivates that the notion of magnitudes
of the same kind is replaced by that of commensurable magnitudes (see
Section \ref{subsec:13}).

It follows immediately from Definition \ref{d3.3} that 
\[
\left(x+y\right)+z=x+\left(y+z\right),\qquad x+y=y+x
\]
for all $x,y,z\in\mathsf{C}$, and that 
\[
x+0_{\mathsf{C}}=0_{\mathsf{C}}+x=x
\]
 for any $x\in\mathsf{C}$ since $0_{\mathsf{C}}=0\cdot u$. 

If $x=\rho\cdot u$ so that $\lambda\cdot x=\lambda\rho\cdot u$ and
$\kappa\cdot x=\kappa\rho\cdot u$ then 
\begin{gather*}
\left(\lambda+\kappa\right)\cdot x=\left(\lambda+\kappa\right)\cdot\left(\rho\cdot u\right)=\left(\lambda+\kappa\right)\rho\cdot u=\left(\lambda\rho+\kappa\rho\right)\cdot u=\lambda\cdot x+\kappa\cdot x,
\end{gather*}
and if $x=\rho\cdot u$ and $y=\sigma\cdot u$ so that $\lambda\cdot x=\lambda\rho\cdot u$
and $\lambda\cdot y=\lambda\sigma\cdot u$ then 
\begin{gather*}
\lambda\cdot\left(x+y\right)=\lambda\cdot\left(\left(\rho+\sigma\right)\cdot u\right)=\lambda\left(\rho+\sigma\right)\cdot u=\left(\lambda\rho+\lambda\sigma\right)\cdot u=\lambda\cdot x+\lambda\cdot y.
\end{gather*}

A unital ring $R$ has a unique additive inverse $-1$ of $1\in R$,
and we set 
\[
-x=\left(-1\right)\cdot x
\]
for all $x\in X$. If $\mathsf{C}$ has a unit  element $u$ and $x=\rho\cdot u$
for some $\rho\in R$ then
\[
x+\left(-x\right)=-x+x=0_{\mathsf{C}}
\]
since $x+\left(-x\right)=\rho\cdot u+\left(-\rho\right)\cdot u=\left(\rho+\left(-\rho\right)\right)\cdot u=0\cdot u$
and $-x+x=\left(-\rho\right)\cdot u+\rho\cdot u=\left(-\rho+\rho\right)\cdot u=0\cdot u$,
using the fact that $-x=\left(-1\right)\cdot\left(\rho\cdot u\right)=\left(-\rho\right)\cdot u$. 

As usual, we may write $x+\left(-y\right)$ as $x-y$, and thus $x+\left(-x\right)$
as $x-x$.

We conclude that an orbit class $\mathsf{C}\in X/{\sim}$ with a unit
element is a module with addition in $\mathsf{C}$ given by Definition
\ref{d3.3}, and scalar multiplication in $\mathsf{C}$ inherited
from the scalar multiplication in $X$. A non-trivial orbit class
with a unit element is an \emph{orbit class module} with a simple
structure.
\begin{prop}
\label{p5}Let $X$ be a scalable monoid over $R$. If $\mathsf{C}\in X/{\sim}$
is a non-trivial orbit class with a unit element then $\mathsf{C}$
is a free module of rank 1 over $R$.
\end{prop}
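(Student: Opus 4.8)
The plan is to exhibit the unit element $u$ as a single-element basis of $\mathsf{C}$, thereby identifying $\mathsf{C}$ with $R$ itself as an $R$-module. The discussion immediately preceding the statement already establishes that $\mathsf{C}$, equipped with the addition of Definition \ref{d3.3} and the inherited scaling action, is a module over $R$; moreover, Proposition \ref{p91} guarantees that $R$ is non-trivial and commutative, so the notion of a free module of rank $1$ is unambiguous. It therefore remains only to produce a one-element basis.

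First I would introduce the candidate coordinate map $\psi\colon R\rightarrow\mathsf{C}$ defined by $\psi\left(\rho\right)=\rho\cdot u$ and verify that it is an $R$-module homomorphism. Additivity, $\psi\left(\rho+\sigma\right)=\psi\left(\rho\right)+\psi\left(\sigma\right)$, is immediate from Definition \ref{d3.3}, and compatibility with scalars, $\psi\left(\lambda\rho\right)=\lambda\cdot\psi\left(\rho\right)$, follows from clause (2) of Definition \ref{thm:def1}. Surjectivity of $\psi$ is precisely the generating property of $u$: every $x\in\mathsf{C}$ equals $\rho\cdot u$ for some $\rho\in R$. Injectivity of $\psi$ is precisely the uniqueness clause in the definition of a unit element: $\rho\cdot u=\rho'\cdot u$ forces $\rho=\rho'$.

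Hence $\psi$ is an isomorphism of $R$-modules from $R$ onto $\mathsf{C}$. Equivalently, $\left\{ u\right\} $ is a basis of $\mathsf{C}$: it spans because $u$ is a generating element, and $u$ is linearly independent because $\rho\cdot u=0_{\mathsf{C}}=0\cdot u$ forces $\rho=0$ by the unit-element property. Since $R$, regarded as a module over itself, is free of rank $1$, the isomorphic module $\mathsf{C}$ is free of rank $1$ as well. The rank is genuinely $1$ rather than $0$ because $\mathsf{C}$ is non-trivial, so that $u\neq0_{\mathsf{C}}$, while $R$ is non-trivial, so that $0\neq1$.

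I expect no genuinely hard step here: the module axioms were checked in the text directly before the statement, and the two defining properties of a unit element hand us the surjectivity and injectivity of $\psi$ at once. The only point deserving explicit comment is the well-definedness of the rank, which is secured by the non-triviality of both $\mathsf{C}$ and $R$ noted above, so that $\mathsf{C}$ is a nonzero free module and its rank is correctly $1$.
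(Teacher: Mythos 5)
Your proof is correct and takes essentially the same route as the paper's: the module structure is imported from the discussion preceding the statement, the two defining properties of a unit element make $\{u\}$ a one-element basis (your coordinate map $\psi$ is just an explicit packaging of that claim), and Proposition \ref{p91} supplies the non-triviality and commutativity of $R$ needed for the rank to be well defined. The paper compresses the last point by citing the invariant basis number property of non-trivial commutative rings, whereas you argue the non-degeneracy of the rank directly, but the substance is the same.
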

\begin{proof}
We have shown that $\mathsf{C}$ is a module. Also, if $u$ is a unit
element for $\mathsf{C}$ then $\left\{ u\right\} $ is a basis for
$\mathsf{C}$, and $R$ is a non-trivial commutative ring by Proposition
\ref{p91}, so $R$ has the invariant basis number property \cite{RICH}.
\end{proof}
Thus, if every orbit class $\mathsf{C}\in X/{\sim}$ contains a non-zero
unit element for $\mathsf{C}$ then $X$ is the union of disjoint
isomorphic free modules of rank 1. This fact may be compared to Quade's
and Raposo's definitions of quantity spaces \cite{QUAD,RAP}.

Recall that identities corresponding to $\left(\lambda+\kappa\right)\cdot x=\lambda\cdot x+\kappa\cdot x,$
$\lambda\cdot\left(x+y\right)=\lambda\cdot x+\lambda\cdot y$ and
$\lambda\cdot\left(\kappa\!\cdot\!x\right)=\lambda\kappa\cdot x$
were proved in Propositions 1 \textendash{} 3 in Book V of the \emph{Elements,}
so rudiments of Proposition \ref{p5} were present already in the
Greek theory of magnitudes.

\subsection{Orbit classes as ordered modules}

A \emph{total order} on a set $S$ is a binary relation $\leq$ such
that for all $x,y,z\in S$ we have that
\begin{enumerate}
\item $x\leq y$ or $y\leq x$\emph{;}
\item if $x\leq y$ and $y\leq x$ then $x=y$\emph{;}
\item if $x\leq y$ and $y\leq z$ then $x\leq z$.
\end{enumerate}
An \emph{order} on a ring $R$ is a total order $\leq$ on $R$ such
that
\begin{enumerate}
\item if $x\le y$ then $x+z\leq y+z$\emph{;}
\item if $0\leq x$ and $0\leq y$ then $0\leq xy$.
\end{enumerate}
An \emph{ordered ring} is, of course, a ring equipped with an order
on a ring. Well-known facts about inequalities such as $0\leq1$ and
if $x\leq y$ and $x'\leq y'$ then $x+x'\leq y+y'$ can be derived
from the definition of an ordered ring.
\begin{prop}
Let $\mathsf{C}$ be an orbit class module over an ordered ring $R$,
$\mathrm{U}\left(\mathsf{C}\right)$ the set of all unit elements
for $\mathsf{C}$. Also let $\equiv_{\mathsf{C}}$ be the relation
on $\mathrm{U}\left(\mathsf{C}\right)$ defined by $u\equiv_{\mathsf{C}}v$
if and only if $u=\rho\cdot v$ for some $\rho\in R$ such that $0\leq\rho$.
Then $\equiv_{\mathsf{C}}$ is an equivalence relation on $\mathrm{U}\left(\mathsf{C}\right)$.
\end{prop}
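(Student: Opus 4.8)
The plan is to verify the three defining properties of an equivalence relation, noting that reflexivity and transitivity are routine while symmetry is the crux. Throughout I may use that $R$ is non-trivial and commutative, which holds by Proposition \ref{p91} since $\mathsf{C}$ admits unit elements, together with the elementary facts about ordered rings recorded above, in particular $0\leq1$.

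For reflexivity I would observe that $u=1\cdot u$ by Definition \ref{thm:def1}(1) and that $0\leq1$, so $u\equiv_{\mathsf{C}}u$. For transitivity, suppose $u=\rho\cdot v$ and $v=\sigma\cdot w$ with $0\leq\rho$ and $0\leq\sigma$. Then $u=\rho\cdot\left(\sigma\cdot w\right)=\rho\sigma\cdot w$ by Definition \ref{thm:def1}(2), and $0\leq\rho\sigma$ by the second axiom for an order on a ring; hence $u\equiv_{\mathsf{C}}w$.

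The main obstacle is symmetry: from $u=\rho\cdot v$ with $0\leq\rho$ I must produce $\sigma$ with $0\leq\sigma$ and $v=\sigma\cdot u$. Since $u$ is a unit element it is in particular a generating element for $\mathsf{C}$, and $v\in\mathsf{C}$, so there is a unique $\tau\in R$ with $v=\tau\cdot u$; the candidate is $\sigma=\tau$, and everything reduces to showing $0\leq\tau$. Substituting gives $u=\rho\cdot v=\rho\cdot\left(\tau\cdot u\right)=\rho\tau\cdot u=1\cdot u$, and because $u$ is a unit element the uniqueness clause forces $\rho\tau=1$; in particular $\rho\neq0$, for otherwise $\rho\tau=0\neq1$.

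It therefore remains to prove the ordered-ring fact that a nonnegative unit has a nonnegative inverse, and this is the one step that genuinely uses the order structure. I would argue by contradiction using totality of $\leq$: if $0\leq\tau$ were to fail then $\tau\leq0$, so $0\leq-\tau$, whence $0\leq\rho\left(-\tau\right)=-\left(\rho\tau\right)=-1$ by the order axioms and commutativity of $R$; adding $1$ to both sides gives $1\leq0$, and combined with $0\leq1$ antisymmetry yields $0=1$, contradicting non-triviality. Hence $0\leq\tau$, so $v=\tau\cdot u$ with $0\leq\tau$ witnesses $v\equiv_{\mathsf{C}}u$, completing the proof that $\equiv_{\mathsf{C}}$ is an equivalence relation.
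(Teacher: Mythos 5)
Your proof is correct and follows essentially the same route as the paper's: reflexivity from $u=1\cdot u$, transitivity from $0\leq\rho\sigma$, and symmetry by taking the unique $\tau$ with $v=\tau\cdot u$, forcing $\rho\tau=1$ through the uniqueness clause for unit elements, and concluding $0\leq\tau$. The only difference is that you explicitly justify the ordered-ring fact that a nonnegative unit has a nonnegative inverse (via totality, the order axioms, and non-triviality of $R$ from Proposition \ref{p91}), a step the paper asserts without proof as ``so $0\leq\tau$ since $0\leq\rho$''.
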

\begin{proof}
The relation $\equiv{}_{\mathsf{C}}$ is reflexive since $u=1\cdot u$,
and transitive since if $u=\rho\cdot v$ and $v=\sigma\cdot w$, where
$0\leq\rho,\sigma$, then $u=\rho\sigma\cdot w$, where $0\leq\rho\sigma$.
Also note that if $u\equiv_{\mathsf{C}}v$, so that $u=\rho\cdot v$,
$v=\tau\cdot u$ and $0\leq\rho$, then $1\cdot u=\rho\cdot v=\rho\tau\cdot u$
and $1\cdot v=\tau\cdot u=\tau\rho\cdot v$, so $\rho\tau=\tau\rho=1$,
so $0\leq\tau$ since $0\leq\rho$. Thus, $\tau\cdot u=\tau\rho\cdot v=v$,
where $0\leq\tau$, so $\equiv_{\mathsf{C}}$ is symmetric as well.
\end{proof}
\begin{prop}
\label{p13}Let $\mathsf{C}$ be an orbit class module over an ordered
ring $R$, and let $u,v$ be unit elements for $\mathsf{C}$ such
that $u\equiv_{\mathsf{C}}v$. For any $x\in\mathsf{C}$, if $x=\rho\cdot u=\sigma\cdot v$
for some $\rho,\sigma\in R$ and $0\leq\rho$ then $0\leq\sigma$.
\end{prop}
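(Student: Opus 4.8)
The plan is to reduce the whole statement to a single scalar identity $\sigma=\rho\tau$ and then invoke the order axiom governing products in $R$. The key observations are that the hypothesis $u\equiv_{\mathsf{C}}v$ supplies a \emph{nonnegative} scalar linking the two unit elements, and that the unit-element property forces scalar representations relative to $v$ to be unique.

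First I would unwind the hypothesis $u\equiv_{\mathsf{C}}v$: by the definition of $\equiv_{\mathsf{C}}$, there is some $\tau\in R$ with $0\leq\tau$ such that $u=\tau\cdot v$. Substituting this into the representation of $x$ relative to $u$, and using condition (2) of Definition \ref{thm:def1}, I obtain $x=\rho\cdot u=\rho\cdot\left(\tau\cdot v\right)=\rho\tau\cdot v$. On the other hand $x=\sigma\cdot v$ by hypothesis, so $\rho\tau\cdot v=\sigma\cdot v$. Since $v$ is a unit element for $\mathsf{C}$, scalar representations relative to $v$ are unique, whence $\sigma=\rho\tau$.

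The conclusion then follows from the order structure on $R$: since $0\leq\rho$ by hypothesis and $0\leq\tau$ by the choice of $\tau$, the second axiom for an order on a ring yields $0\leq\rho\tau=\sigma$. There is no substantive obstacle here; the only point that requires attention is ensuring that the scalar $\tau$ extracted from $u\equiv_{\mathsf{C}}v$ is genuinely nonnegative, which is precisely what the definition of $\equiv_{\mathsf{C}}$ guarantees, so that the product $\rho\tau$ lands in the nonnegative cone. (Note also that the proof makes no use of the symmetry of $\equiv_{\mathsf{C}}$, only of the directed relation $u=\tau\cdot v$ with $\tau\geq 0$.)
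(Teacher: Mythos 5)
Your proof is correct and follows exactly the paper's own argument: extract $\tau\geq 0$ with $u=\tau\cdot v$ from the definition of $\equiv_{\mathsf{C}}$, compute $x=\rho\tau\cdot v=\sigma\cdot v$, use uniqueness of the scalar representation relative to the unit element $v$ to get $\sigma=\rho\tau$, and conclude $0\leq\sigma$ from the order axiom on products. Your explicit remark that only the directed relation $u=\tau\cdot v$ (not symmetry of $\equiv_{\mathsf{C}}$) is needed is a small but accurate observation beyond what the paper states.
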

\begin{proof}
There is some $\tau\in R$ such that $u=\tau\cdot v$ and $0\leq\tau$.
Thus, $\sigma\cdot v=x=\rho\cdot u=\rho\cdot\left(\tau\cdot v\right)=\rho\tau\cdot v$,
so $\sigma=\rho\tau$, so $0\leq\sigma$ since $0\leq\rho,\tau$.
\end{proof}
\begin{defn}
A set of unit elements $\mathrm{U}\left(\mathsf{C}\right)$ such that
if $u,v\in\mathrm{U}\left(\mathsf{C}\right)$ then $u\equiv_{\mathsf{C}}v$
is said to be \emph{consistent}.
\end{defn}
\begin{defn}
Let $\mathsf{C}$ be an orbit class module over an ordered ring $R$.
An \emph{order} on $\mathsf{C}$ is a total order on $\mathsf{C}$
as a set such that, for all $x,y,z\in\mathsf{C}$ and $\lambda\in R$,
\begin{enumerate}
\item if $x\le_{\mathsf{C}}y$ then $x+z\leq_{\mathsf{C}}y+z$\emph{;}
\item if $x\le_{\mathsf{C}}y$ and $0\leq\lambda$ then $\lambda\cdot x\leq_{\mathsf{C}}\lambda\cdot y$.
\end{enumerate}
\end{defn}
\begin{prop}
\label{prop:p10}Let $\mathsf{C}$ be an orbit class module over an
ordered ring, $\mathrm{U}\left(\mathsf{C}\right)$ a consistent set
of unit elements for $\mathsf{C}$. Then the relation $\leq_{\mathsf{C}}$
on $\mathsf{C}$ defined by $x\leq_{\mathsf{C}}y$ if and only if
$y-x=\rho\cdot u$ for some $\rho\in R$ such that $0\leq\rho$ is
an order on $\mathsf{C}$.
\end{prop}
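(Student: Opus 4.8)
The plan is to fix a unit element $u\in\mathrm{U}(\mathsf{C})$ and to transport the order of $R$ onto $\mathsf{C}$ along the rank-one free module isomorphism $\xi\mapsto\xi\cdot u$ furnished by Proposition \ref{p5}. Since $u$ is a unit element, each element of $\mathsf{C}$ has a unique representation; write $x=\xi\cdot u$, $y=\eta\cdot u$, $z=\zeta\cdot u$. By Definition \ref{d3.3} together with the convention $-x=(-1)\cdot x$ one has $y-x=(\eta-\xi)\cdot u$, and the uniqueness clause in the definition of a unit element forces the coefficient $\rho$ in $y-x=\rho\cdot u$ to equal $\eta-\xi$. Hence $x\leq_{\mathsf{C}}y$ holds precisely when $0\leq\eta-\xi$; adding $\xi$ (resp. $-\xi$) to both sides via the ordered-ring axiom that $a\leq b$ implies $a+c\leq b+c$, this is equivalent to $\xi\leq\eta$ in $R$. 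This equivalence $x\leq_{\mathsf{C}}y\iff\xi\leq\eta$ is the heart of the argument: it identifies $\leq_{\mathsf{C}}$ with the pullback of $\leq$ under the bijection $x\leftrightarrow\xi$.

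Before invoking this, I would record that $\leq_{\mathsf{C}}$ is well defined, i.e. independent of the chosen $u\in\mathrm{U}(\mathsf{C})$, and this is the one place the consistency hypothesis is needed. Indeed, if $u,v\in\mathrm{U}(\mathsf{C})$ then $u\equiv_{\mathsf{C}}v$ by consistency, so applying Proposition \ref{p13} to the element $y-x$ shows that $y-x=\rho\cdot u$ with $0\leq\rho$ if and only if $y-x=\sigma\cdot v$ with $0\leq\sigma$. Thus the relation depends only on $\mathsf{C}$ and the chosen consistent set, justifying the notation $\leq_{\mathsf{C}}$.

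Granting the equivalence, every clause of the definition of an order on $\mathsf{C}$ transfers directly from $R$. Totality, antisymmetry and transitivity of $\leq_{\mathsf{C}}$ follow from the corresponding properties of the total order on $R$ applied to the coefficients, with antisymmetry also using uniqueness of the representation to conclude $x=y$ once $\xi=\eta$. For translation invariance, $x+z=(\xi+\zeta)\cdot u$ and $y+z=(\eta+\zeta)\cdot u$ by Definition \ref{d3.3}, so $\xi\leq\eta$ yields $\xi+\zeta\leq\eta+\zeta$ and hence $x+z\leq_{\mathsf{C}}y+z$. For scaling by $\lambda$ with $0\leq\lambda$, Definition \ref{thm:def1} gives $\lambda\cdot x=\lambda\xi\cdot u$ and $\lambda\cdot y=\lambda\eta\cdot u$, and from $0\leq\eta-\xi$ and $0\leq\lambda$ the ordered-ring axiom on products yields $0\leq\lambda(\eta-\xi)=\lambda\eta-\lambda\xi$, i.e. $\lambda\cdot x\leq_{\mathsf{C}}\lambda\cdot y$. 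I anticipate no serious obstacle; the only points demanding care are the uniqueness of the coefficient $\rho$, so that the defining condition genuinely reads off the sign of $\eta-\xi$, and the well-definedness step, which is precisely where the consistency hypothesis is used.
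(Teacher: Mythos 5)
Your proposal is correct and follows essentially the same route as the paper's proof: well-definedness of $\leq_{\mathsf{C}}$ via the consistency hypothesis and Proposition \ref{p13}, followed by reduction of each clause of the definition of an order to the corresponding ordered-ring axiom through the unique coefficient representation $x=\xi\cdot u$ relative to a unit element. The only difference is expository \textendash{} you state the reduction once as the equivalence $x\leq_{\mathsf{C}}y$ if and only if $\xi\leq\eta$ and transfer all clauses through it, whereas the paper repeats the coefficient computation separately for totality, antisymmetry, transitivity, translation and scaling \textendash{} but the underlying manipulations coincide.
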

\begin{proof}
By Proposition \ref{p13} the relation $\leq_{\mathsf{C}}$ does not
depend on a choice of unit element in $\mathsf{C}$. We first show
that $\leq_{\mathsf{C}}$ is a total order on $\mathsf{C}$. If $x,y\in\mathsf{C}$
then $y-x=\rho\cdot u$ for some $\rho\in R,u\in U\cap\mathsf{C}$.
Thus, if $0\leq\rho$ then $x\leq_{\mathsf{C}}y$; if $\rho\leq0$
then $0\leq\left(-\rho\right)$ and $x-y=-\left(y-x\right)=-\left(\rho\cdot u\right)=\left(-1\right)\cdot\left(\rho\cdot u\right)=\left(-\rho\right)\cdot u$,
so $y\leq_{\mathsf{C}}x$. 

If $x\leq_{\mathsf{C}}y$ and $y\leq_{\mathsf{C}}x$ then $0\leq\rho$
and $\rho\leq0$, so $\rho=0$, so $y-x=0_{\mathsf{C}},$ so $x=y$.
Also, if $x,y,z\in\mathsf{C}$, $y-x=\rho\cdot u$ and $z-y=\sigma\cdot u$
then $z-x=\left(y-x\right)+\left(z-y\right)=\left(\rho+\sigma\right)\cdot u$,
so if $x\leq_{\mathsf{C}}y$ and $y\leq_{\mathsf{C}}z$ so that $0\leq\rho,\sigma$
then $x\leq_{\mathsf{C}}z$ since $0\leq\rho+\sigma$. Similarly,
if $x\leq_{\mathsf{C}}y$, meaning that $x=\rho\cdot u$ and $y=\sigma\cdot u$
for some $\rho,\sigma\in R$ such that $0\leq\sigma-\rho$, then $0\leq\lambda$
implies $0\leq\lambda\left(\sigma-\rho\right)=\lambda\sigma-\lambda\rho$,
so $\lambda\cdot x\leq_{\mathsf{C}}\lambda\cdot y$ since $\lambda\cdot y-\lambda\cdot x=\lambda\cdot\left(\sigma\cdot u\right)-\lambda\cdot\left(\rho\cdot u\right)=\left(\lambda\sigma-\lambda\rho\right)\cdot u$.
Also, if $x\le_{\mathsf{C}}y$ then $x+z\leq_{\mathsf{C}}y+z$ since
$\left(y+z\right)-\left(x+z\right)=y-x=\left(\sigma-\rho\right)\cdot u$.
\end{proof}
We call an orbit class module $\mathsf{C}$ equipped with an order
on $\mathsf{C}$ an \emph{ordered orbit class module}.

\section{\label{s25}Scalable monoids and sets of unit elements }
\begin{defn}
\label{d8}A \emph{dense} set of elements of a scalable monoid $X$
is a set $U$ of elements of $X$ such that for every $x\in X$ there
is some $u\in U$ such that $u\sim x$. A \emph{sparse} set of elements
of $X$ is a set $U$ of elements of $X$ such that $u\sim v$ implies
$u=v$ for any $u,v\in U$. A \emph{closed} set of elements of $X$
is a set $U$ of elements of $X$ such that if $u,v\in U$ then $uv\in U$.
A \emph{set of unit elements} of a scalable monoid $X$ is a set of
elements of $X$ each of which is a unit element for some $\mathsf{C}\in X/{\sim}$. 

We call a dense sparse set of unit elements of $X$ a \emph{system
of unit elements }for $X$, and a sparse set of unit elements of $X$
a \emph{partial} \emph{system of unit elements }for $X$.
\end{defn}

\subsection{Additive scalable monoids}
\begin{defn}
\label{d9}An \emph{additive scalable monoid} is a scalable monoid
$X$ such that for every $\mathsf{C}\in X/{\sim}$ there is a binary
operation
\[
+:\mathsf{C}\times\mathsf{C}\rightarrow\mathsf{C},\qquad\left(x,y\right)\mapsto x+y
\]
such that $\mathsf{C}$ equipped with $+$ is an abelian group and
\[
x\left(y+z\right)=xy+xz,\qquad\left(y+z\right)x=yx+yz
\]
 for all $x\in\mathsf{A}$ and $y,z\in\mathsf{B}$ for all $\mathsf{A},\mathsf{B}\in X/{\sim}$.
\end{defn}
\begin{prop}
\label{p11}If a scalable monoid $X$ is equipped with a dense closed
set of unit elements $U$ then $X$ is an additive scalable monoid.
\end{prop}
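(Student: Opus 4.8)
The plan is to build the required per-orbit-class additions entirely from the material already developed in Section \ref{subsec:91}, using \emph{density} to guarantee that every orbit class carries such an addition and \emph{closedness} to force the cross-orbit distributive laws demanded by Definition \ref{d9}. Associativity, commutativity and the existence of a zero and of additive inverses are not the issue; these are inherited from $R$ and were already recorded after Definition \ref{d3.3}. The work is in (i) making sure every orbit class has a unit element to run that construction, and (ii) the two distributive laws.

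First I would show that every orbit class $\mathsf{C}\in X/{\sim}$ admits a unit element lying in $U$. Pick any $x\in\mathsf{C}$; by density there is some $u\in U$ with $u\sim x$, so $u\in[x]=\mathsf{C}$. Since $U$ is a set of unit elements, $u$ is a unit element for some orbit class, and as a unit element always lies in its own orbit class, that class is $[u]=\mathsf{C}$; hence $u$ is a unit element for $\mathsf{C}$. The construction of Definition \ref{d3.3} then equips $\mathsf{C}$ with an addition, which is well defined by the independence of the sum from the choice of unit element (established just before Definition \ref{d3.3}); and the verifications following Definition \ref{d3.3} show that $\mathsf{C}$ with this addition is an abelian group (indeed a module, Proposition \ref{p5}). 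This settles the abelian-group requirement of Definition \ref{d9} for every orbit class.

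The substance of the proof is cross-orbit distributivity. Fix orbit classes $\mathsf{A},\mathsf{B}$, let $x\in\mathsf{A}$ and $y,z\in\mathsf{B}$, and choose unit elements $u\in U\cap\mathsf{A}$ and $v\in U\cap\mathsf{B}$ as above. Because $U$ is closed, $uv\in U$; and since $[uv]=[u][v]=\mathsf{AB}$ while $uv$ lies in its own orbit class, $uv$ is a unit element for $\mathsf{AB}$. Writing $x=\alpha\cdot u$, $y=\beta\cdot v$, $z=\gamma\cdot v$, Lemma \ref{thm:lem1} gives $xy=\alpha\beta\cdot uv$, $xz=\alpha\gamma\cdot uv$, and $x(y+z)=(\alpha\cdot u)\bigl((\beta+\gamma)\cdot v\bigr)=\alpha(\beta+\gamma)\cdot uv$. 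Since $xy$ and $xz$ both lie in $\mathsf{AB}=[uv]$, I would compute their sum using the unit element $uv$, obtaining $xy+xz=(\alpha\beta+\alpha\gamma)\cdot uv$; left distributivity in $R$ then yields $x(y+z)=xy+xz$. The right-hand distributive law follows identically, using $vu\in U$ as a unit element for $\mathsf{BA}$ and right distributivity in $R$.

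The one point requiring care — and the only real obstacle — is that the sum $xy+xz$ must be permitted to be computed using this particular unit $uv$ rather than some a priori fixed unit element for $\mathsf{AB}$. This is exactly licensed by the independence of orbit-class addition from the choice of unit element, so once that fact is in hand the whole verification collapses to the distributive laws of the ring $R$. Closedness of $U$ is what guarantees the needed common unit $uv$ exists in the first place, which is why the hypothesis is stated as a dense \emph{closed} set of unit elements.
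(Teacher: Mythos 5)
Your proposal is correct and follows essentially the same route as the paper's own proof: density supplies a unit element in each orbit class (giving the per-class abelian group structure via Definition \ref{d3.3} and Proposition \ref{p5}), closedness makes $uv$ and $vu$ unit elements for the product classes, and the distributive laws then reduce to distributivity in $R$ via Lemma \ref{thm:lem1}. Your explicit attention to the well-definedness of the sum with respect to the choice of unit element is a point the paper leaves implicit, but it is the same argument.
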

\begin{proof}
By Proposition \ref{p5}, each $\mathsf{C}\in X/{\sim}$ is a module
since $U$ is dense in $X$.

For all $x\in\mathsf{A}$ and $y,z\in\mathsf{B}$ there are $u,v\in U$
such that $\left[x\right]=\left[u\right]$ and $\left[y\right]=\left[z\right]=\left[v\right]$
since $U$ is dense in $X$, so $x=\rho\cdot u$, $y=\sigma\cdot v$
and $z=\tau\cdot v$ for some $\rho,\sigma,\tau\in R$, so $xy=\rho\sigma\cdot uv,$
$xz=\rho\tau\cdot uv$, $yx=\sigma\rho\cdot vu$ and $zx=\tau\rho\cdot vu$,
so
\begin{gather*}
x\left(y+z\right)=\left(\rho\cdot u\right)\left(\left(\sigma+\tau\right)\cdot v\right)=\rho\left(\sigma+\tau\right)\cdot uv=\left(\rho\sigma+\rho\tau\right)\cdot uv=xy+xz,\\
\left(y+z\right)x=\left(\left(\sigma+\tau\right)\cdot v\right)\left(\rho\cdot u\right)=\left(\sigma+\tau\right)\rho\cdot vu=\left(\sigma\rho+\tau\rho\right)\cdot vu=yx+zx,
\end{gather*}
using the fact that $uv$ and $vu$ are unit elements since $U$ is
closed.
\end{proof}

\subsection{Ordered scalable monoids}
\begin{defn}
\label{def:d14}Let $X$ be a scalable monoid $X$ over an ordered
ring $R$, equipped with a dense set of unit elements. An \emph{order}
on $X$ is a relation $\leq$ on $X$ such that:
\end{defn}
\begin{enumerate}
\item if $x\leq y$ then $x\sim y$ so that $x,y\in\mathsf{C}$ for some
$\mathsf{C}\in X/{\sim}$;
\item the relation $\leq\cap\,\left(\mathsf{C}\times\mathsf{C}\right)$
is an order on $\mathsf{C}$ for every $\mathsf{C}\in X/{\sim}$ ;
\item if $0_{\mathsf{A}}\leq\cap\,\left(\mathsf{A}\times\mathsf{A}\right)\,x$
and $0_{\mathsf{B}}\leq\cap\,\left(\mathsf{B}\times\mathsf{B}\right)\,y$
then $0_{\mathsf{A}\mathsf{B}}\leq\cap\,\left(\mathsf{AB}\times\mathsf{AB}\right)\,xy$
for any $x\in\mathsf{A}$ and $y\in\mathsf{B}$.
\end{enumerate}
If we denote the relation $\leq\cap\,\left(\mathsf{C}\times\mathsf{C}\right)$
by $\leq_{\mathsf{C}}$ then (3) takes a more transparent form:
\begin{enumerate}
\item[(3')]  if $0_{\mathsf{A}}\leq_{\mathsf{A}}x$ and $0_{\mathsf{B}}\leq_{\mathsf{B}}y$
then $0_{\mathsf{A}\mathsf{B}}\leq_{\mathsf{A}\mathsf{B}}xy$ for
any $x\in\mathsf{A}$ and $y\in\mathsf{B}$.
\end{enumerate}
An \emph{ordered scalable monoid} $X$ is a scalable monoid equipped
with an order on $X$.
\begin{defn}
Let $X$ be a scalable monoid over an ordered ring $R$. A \emph{dense
consistent} set of unit elements for $X$ is a dense set $U$ of unit
elements of $X$ such that $U\cap\mathsf{C}$ is a consistent set
of unit elements for every $\mathsf{C}\in X/{\sim}$.
\end{defn}
\begin{prop}
\label{p15}Let $X$ be a scalable monoid over an ordered ring $R$,
equipped with a dense consistent closed set $U$ of unit elements
of $X$. Then there is a unique order $\leq$ on $X$ such that $x\leq_{\mathsf{C}}y$
if and only if $x\sim y$ and $y-x=\rho\cdot u$ for some $\rho\in R$
such that $0\leq\rho$ and some $u\in U\cap\mathsf{C}$.
\end{prop}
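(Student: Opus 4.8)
The plan is to build the order on $X$ one orbit class at a time and then verify the three clauses of Definition \ref{def:d14} together with uniqueness. Since $U$ is dense, every $\mathsf{C}\in X/{\sim}$ contains some $u\in U$, which is a unit element for $\mathsf{C}$; hence each $\mathsf{C}$ is an orbit class module, and $U\cap\mathsf{C}$ is a nonempty consistent set of unit elements by hypothesis. Proposition \ref{prop:p10} then furnishes, for each $\mathsf{C}$, an order $\leq_{\mathsf{C}}$ on $\mathsf{C}$ given exactly by the stated condition, while Proposition \ref{p13} guarantees that this relation does not depend on which $u\in U\cap\mathsf{C}$ is used. I would define $\leq$ on $X$ to be the union $\bigcup_{\mathsf{C}\in X/{\sim}}\leq_{\mathsf{C}}$, that is, $x\leq y$ iff $x\sim y$ and $x\leq_{[x]}y$.

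Clauses (1) and (2) of Definition \ref{def:d14} are then essentially immediate: clause (1) holds because $x\leq y$ forces $x,y$ into a common orbit class by construction, and clause (2) is precisely the assertion of Proposition \ref{prop:p10} that each $\leq_{\mathsf{C}}={\leq}\cap(\mathsf{C}\times\mathsf{C})$ is an order on the orbit class module $\mathsf{C}$.

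The substance of the proof is clause (3$'$), the compatibility with multiplication, and this is where I expect the only real work. Suppose $0_{\mathsf{A}}\leq_{\mathsf{A}}x$ and $0_{\mathsf{B}}\leq_{\mathsf{B}}y$. Since $x-0_{\mathsf{A}}=x$ and $y-0_{\mathsf{B}}=y$, the definition gives $x=\rho\cdot u$ and $y=\sigma\cdot v$ with $0\leq\rho$, $0\leq\sigma$ and $u\in U\cap\mathsf{A}$, $v\in U\cap\mathsf{B}$. By Lemma \ref{thm:lem1}, $xy=(\rho\cdot u)(\sigma\cdot v)=\rho\sigma\cdot uv$. Here the hypotheses on $U$ and $R$ enter decisively: closedness of $U$ gives $uv\in U$, and since $[u]=\mathsf{A}$ and $[v]=\mathsf{B}$ we have $uv\in[uv]=\mathsf{AB}$, so $uv$ is a unit element for $\mathsf{AB}$ lying in $U\cap\mathsf{AB}$; and the ordered-ring axiom yields $0\leq\rho\sigma$. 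As $0_{\mathsf{AB}}=0\cdot uv$, we obtain $xy-0_{\mathsf{AB}}=\rho\sigma\cdot uv$ with $0\leq\rho\sigma$, i.e. $0_{\mathsf{AB}}\leq_{\mathsf{AB}}xy$, which is exactly (3$'$). The main obstacle is precisely this step: without closedness of $U$ the product $uv$ need not be one of the distinguished unit elements through which positivity in $\mathsf{AB}$ is measured, and without the multiplicativity of nonnegativity in $R$ the sign of $\rho\sigma$ would be uncontrolled.

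Finally, for uniqueness I would observe that any order $\leq'$ on $X$ must, by clause (1), relate only commensurable elements, so $\leq'=\bigcup_{\mathsf{C}}({\leq'}\cap(\mathsf{C}\times\mathsf{C}))$ is fully determined by its restrictions to the orbit classes. The prescribed biconditional fixes each such restriction $\leq'_{\mathsf{C}}$ completely, and this is consistent by Proposition \ref{p13}; hence $\leq'$ coincides with $\leq$, and the order is unique.
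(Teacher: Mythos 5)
Your proposal is correct and follows essentially the same route as the paper: both reduce the problem via Proposition \ref{prop:p10} and Proposition \ref{p13} to verifying clause (3) of Definition \ref{def:d14}, and both verify it by the identical computation $xy=(\rho\cdot u)(\sigma\cdot v)=\rho\sigma\cdot uv$ with $uv\in U\cap\mathsf{AB}$ by closedness and $0\leq\rho\sigma$ by the ordered-ring axiom. Your additional explicit treatment of the union construction and of uniqueness merely spells out what the paper leaves implicit.
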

\begin{proof}
In view of Proposition \ref{prop:p10}, it suffices to show (3) in
Definition \ref{def:d14}. By Proposition \ref{p13} the relation
$\leq_{\mathsf{C}}$ does not depend on a choice of unit element in
$\mathsf{C}$. If $0_{\mathsf{A}}\leq_{\mathsf{A}}x$ and $0_{\mathsf{B}}\leq_{\mathsf{B}}y$,
meaning that $x=\rho\cdot u$ and $y=\sigma\cdot v$ for some $\rho,\sigma\in R$,
where $0\leq\rho,\sigma$ and $u\in\mathsf{A}$, $v\in\mathsf{B}$,
then $xy=\left(\rho\cdot u\right)\left(\sigma\cdot v\right)=\rho\sigma\cdot uv$,
where $0\leq\rho\sigma$ and $uv\in AB$, so $0_{\mathsf{A}\mathsf{B}}\leq_{\mathsf{A}\mathsf{B}}xy$. 
\end{proof}
We can thus impose an order on a scalable monoid $X$ over an ordered
ring by specifying a dense consistent closed set of unit elements
of $X$. Recall that by Proposition \ref{p11} a scalable monoid with
a dense closed set of unit elements is an additive scalable monoid
as well.

\section{\label{sec:s11}On coherent systems of unit elements}

Let us now bring together some notions from Sections \ref{sec:2-7}
and \ref{s25}.
\begin{defn}
\label{d12}A \emph{coherent }system of unit elements for $X$ is
a submonoid of $X$ which is a system of unit elements for $X$.
\end{defn}
Note that a coherent system of unit elements is dense, sparse and
closed by definition, and trivially consistent.
\begin{prop}
\label{p9}Let $X$ be a scalable monoid, $\mathcal{U}$ a coherent
system of unit\linebreak{}
 elements for $X$, and $\mathcal{V}\subseteq\mathcal{U}$ a central
submonoid of $X$. Then $X/\mathcal{V}$ is a scalable monoid, $\left[v\right]_{\mathcal{V}}=\left[\boldsymbol{1}\right]_{\mathcal{V}}$
for any $v\in\mathcal{V}$, and $\overline{\mathcal{U}}=\left\{ \left[u\right]_{\mathcal{V}}\mid u\in\mathcal{U}\right\} $
is a coherent system of unit elements for $X/\mathcal{V}$. 
\end{prop}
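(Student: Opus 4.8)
The plan is to dispatch the three assertions in order, reducing the first two to machinery already in place and reserving the real work for the third. Claim (1) is immediate: since $\mathcal{V}\subseteq\mathcal{U}$ is by hypothesis a central submonoid of $X$, Proposition~\ref{p8} applies verbatim, giving that $X/\mathcal{V}$ is a scalable monoid over $R$ with $\phi_{\mathcal{V}}:x\mapsto\left[x\right]_{\mathcal{V}}$ a surjective scalable monoid homomorphism. Claim (2) is a one-line verification: for $v\in\mathcal{V}$, choosing $m=\boldsymbol{1}$ and $n=v$ (both in the submonoid $\mathcal{V}$) yields $mv=v=n\boldsymbol{1}$, so $v\sim_{\!\mathcal{V}}\boldsymbol{1}$ by Definition~\ref{def5} and hence $\left[v\right]_{\mathcal{V}}=\left[\boldsymbol{1}\right]_{\mathcal{V}}$.

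The substance is claim (3): that $\overline{\mathcal{U}}=\phi_{\mathcal{V}}(\mathcal{U})$ is a coherent system of unit elements for $X/\mathcal{V}$, i.e.\ a submonoid that is dense, sparse, and consists of unit elements. Being the homomorphic image of the submonoid $\mathcal{U}$, the set $\overline{\mathcal{U}}$ is a submonoid. Density transfers along $\phi_{\mathcal{V}}$: given $\left[x\right]_{\mathcal{V}}$, density of $\mathcal{U}$ in $X$ supplies $u\in\mathcal{U}$ with $\alpha\cdot u=\beta\cdot x$, and applying $\phi_{\mathcal{V}}$ gives $\alpha\cdot\left[u\right]_{\mathcal{V}}=\beta\cdot\left[x\right]_{\mathcal{V}}$, so $\left[u\right]_{\mathcal{V}}\sim\left[x\right]_{\mathcal{V}}$. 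The heart of the argument, which I expect to be the main obstacle, is sparseness, and I would prove it as a lemma to be reused below. Suppose $\left[u\right]_{\mathcal{V}}\sim\left[v\right]_{\mathcal{V}}$ for $u,v\in\mathcal{U}$, say $\alpha\cdot\left[u\right]_{\mathcal{V}}=\beta\cdot\left[v\right]_{\mathcal{V}}$. Unwinding through $\sim_{\!\mathcal{V}}$ produces $m,n\in\mathcal{V}$ with $m(\alpha\cdot u)=n(\beta\cdot v)$; the third axiom of Definition~\ref{thm:def1} rewrites this as $\alpha\cdot(mu)=\beta\cdot(nv)$, whence $mu\sim nv$ in $X$. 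Since $\mathcal{V}\subseteq\mathcal{U}$ and $\mathcal{U}$ is closed, both $mu$ and $nv$ lie in $\mathcal{U}$, so sparseness of $\mathcal{U}$ forces $mu=nv$ — which is precisely a witness for $u\sim_{\!\mathcal{V}}v$, i.e.\ $\left[u\right]_{\mathcal{V}}=\left[v\right]_{\mathcal{V}}$.

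It then remains to show each $\left[u\right]_{\mathcal{V}}$ is a unit element for its orbit class $\mathsf{C}'$ in $(X/\mathcal{V})/{\sim}$. For the generating property, take any $\left[x\right]_{\mathcal{V}}\in\mathsf{C}'$; density and the unit-element property of $\mathcal{U}$ in $X$ let me write $x=\rho\cdot w$ with $w\in\mathcal{U}$, so $\left[x\right]_{\mathcal{V}}=\rho\cdot\left[w\right]_{\mathcal{V}}$. Because $\left[x\right]_{\mathcal{V}}\sim\left[u\right]_{\mathcal{V}}$ and $\rho\cdot\left[w\right]_{\mathcal{V}}\sim\left[w\right]_{\mathcal{V}}$ by Corollary~\ref{c31}, we get $\left[w\right]_{\mathcal{V}}\sim\left[u\right]_{\mathcal{V}}$, and the sparseness just established gives $\left[w\right]_{\mathcal{V}}=\left[u\right]_{\mathcal{V}}$, so $\left[x\right]_{\mathcal{V}}=\rho\cdot\left[u\right]_{\mathcal{V}}$. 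For uniqueness, $\rho\cdot\left[u\right]_{\mathcal{V}}=\rho'\cdot\left[u\right]_{\mathcal{V}}$ unwinds (again via the third axiom) to $\rho\cdot(mu)=\rho'\cdot(nu)$ for some $m,n\in\mathcal{V}$; this gives $mu\sim nu$, hence $mu=nu$ by sparseness of $\mathcal{U}$, and then the unit-element property of $mu\in\mathcal{U}$ in $X$ forces $\rho=\rho'$. The delicate points throughout are the repeated passage between $\sim$ on the quotient and $\sim_{\!\mathcal{V}}$ on $X$, mediated each time by the bilinearity axiom, together with the twofold appeal to sparseness of $\mathcal{U}$ — once to collapse relations of the form $mu=nv$ and once for the final scalar uniqueness.
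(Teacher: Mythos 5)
Your proof is correct and follows essentially the same route as the paper's: part (1) via Proposition \ref{p8}, the same one-line witness for part (2), and for part (3) the identical mechanism of passing between $\sim$ on $X/\mathcal{V}$ and $\sim_{\mathcal{V}}$ on $X$ via axiom (3) of Definition \ref{thm:def1}, then invoking closure of $\mathcal{U}$ (with $\mathcal{V}\subseteq\mathcal{U}$) and sparseness. The only differences are organizational --- you prove sparseness first and reuse it to pin down the generating property (a step the paper leaves implicit), and you conclude sparseness directly from the witness $mu=nv$ for $u\sim_{\mathcal{V}}v$ rather than cancelling $\left[v\right]_{\mathcal{V}}=\left[\boldsymbol{1}\right]_{\mathcal{V}}$ in the quotient --- but the substance is the same.
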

\begin{proof}
As $\mathcal{V}$ is a central submonoid of $X$, $X/\mathcal{V}$
is a scalable monoid, and if $v\in\mathcal{V}$ then $v\sim_{_{\mathcal{V}}\!}\boldsymbol{1}$
since $\boldsymbol{1}v=v\boldsymbol{1}$ and $\boldsymbol{1}\in\mathcal{V}$,
so $\left[v\right]_{\mathcal{V}}=\left[\boldsymbol{1}\right]_{\mathcal{V}}$. 

If $\overline{u},\overline{u}'\in\overline{\mathcal{U}}$ then $\overline{u}=\left[u\right]_{\mathcal{V}}$
and $\overline{u}'=\left[u'\right]_{\mathcal{V}}$ for some $u,u'\in\mathcal{U}$,
and $uu'\in\mathcal{U}$ since $\mathcal{U}$ is a monoid, so $\overline{u}\,\overline{u}'=\left[uu'\right]_{\mathcal{V}}\in\overline{\mathcal{U}}$.
We also have 
\[
\left[\boldsymbol{1}\right]_{\mathcal{V}}\left[u\right]_{\mathcal{V}}=\left[\boldsymbol{1}u\right]_{\mathcal{V}}=\left[u\right]_{\mathcal{V}}=\left[u\boldsymbol{1}\right]_{\mathcal{V}}=\left[u\right]_{\mathcal{V}}\left[\boldsymbol{1}\right]_{\mathcal{V}}
\]
and 
\[
\left(\left[u\right]\left[u'\right]\right)\left[u''\right]=\left[\left(uu'\right)u''\right]=\left[u\left(u'u''\right)\right]=\left[u\right]\left(\left[u'\right]\left[u''\right]\right),
\]
so $\overline{\mathcal{U}}$ is a monoid. Thus, $\overline{\mathcal{U}}$
is a submonoid of $X/\mathcal{V}$.

For any $\left[x\right]_{\mathcal{V}}\in X/\mathcal{V}$ there is
some $u\in U$ and $\rho\in R$ such that $\left[x\right]_{\mathcal{V}}=\left[\rho\cdot u\right]_{\mathcal{V}}=\rho\cdot\left[u\right]_{\mathcal{V}}$;
thus also $\left[x\right]_{\mathcal{V}}\sim\left[u\right]_{\mathcal{V}}$
since $1\cdot\left[x\right]_{\mathcal{V}}=\left[1\cdot x\right]_{\mathcal{V}}=\left[x\right]_{\mathcal{V}}$.
If $\left[x\right]_{\mathcal{V}}=\rho\cdot\left[u\right]_{\mathcal{V}}=\sigma\cdot\left[u\right]_{\mathcal{V}}$
for some $\rho,\sigma\in R$ then $\left[\rho\cdot u\right]_{\mathcal{V}}=\left[\sigma\cdot u\right]_{\mathcal{V}}$,
so $\rho\cdot u\sim_{\mathcal{V}}\sigma\cdot u$, so $v\left(\rho\cdot u\right)=v'\left(\sigma\cdot u\right)$
where $v,v'\in\mathcal{V}$, so $\rho\cdot vu=\sigma\cdot v'u$ where
$vu,v'u\in\mathcal{U}$, so $vu\sim v'u$, so $vu=v'u$ since $\mathcal{U}$
is sparse, so $\rho=\sigma$. Thus, $\overline{\mathcal{U}}$ is a
dense set of unit elements of $X/\mathcal{V}$.

Also, if $u,u'\in\mathcal{U}$ and $\left[u\right]_{\mathcal{V}}\sim\left[u'\right]_{\mathcal{V}}$
then $\rho\cdot\left[u\right]_{\mathcal{V}}=\sigma\cdot\left[u'\right]_{\mathcal{V}}$,
so $\left[\rho\cdot u\right]_{\mathcal{V}}=\left[\sigma\cdot u'\right]_{\mathcal{V}}$,
so $\rho\cdot u\sim_{\mathcal{V}}\sigma\cdot u'$, so $v\left(\rho\cdot u\right)=v'\left(\sigma\cdot u'\right)$
for some $v,v'\in\mathcal{V}$, so $\rho\cdot vu=\sigma\cdot v'u'$,
so $vu\sim v'u'$ where $vu,v'u'\in\mathcal{U}$, so $vu=v'u'$, so
$\left[vu\right]_{\mathcal{V}}=\left[v'u'\right]_{\mathcal{V}}$,
so $\left[v\right]_{\mathcal{V}}\left[u\right]_{\mathcal{V}}=\left[v'\right]_{\mathcal{V}}\left[u'\right]_{\mathcal{V}}$,
so $\left[u\right]_{\mathcal{V}}=\left[u'\right]_{\mathcal{V}}$.
Thus, $\overline{\mathcal{U}}$ is a sparse set of unit elements of
$X/\mathcal{V}$.
\end{proof}
Note that $\mathcal{V}$ is a \emph{partial} coherent system of unit
elements of $X$, that is, a submonoid of $X$ that is a partial system
of unit elements of $X$. For any $v,v'\in\mathcal{V}$ and any $\lambda\in R$,
$\lambda\cdot v$ and $\lambda\cdot v'$ in $X$ correspond to the
same element $\left[\lambda\cdot\boldsymbol{1}\right]_{\mathcal{V}}$
of $X/\mathcal{V}$; more generally, for any $v,v'\in\mathcal{V}$,
any $u,u'\in\mathcal{U}$ and any $\lambda\in R$, $\lambda\cdot uvu'$
and $\lambda\cdot uv'u'$ in $X$ correspond to the element $\left[\lambda\cdot uu'\right]_{\mathcal{V}}$
of $X/\mathcal{V}$.

A typical application of Proposition \ref{p9} in physics is described
by Raposo \cite{RAP}:
\begin{quotation}
{\footnotesize{}The mechanism of taking quotients is the algebraic
tool underlying what is common practice in physics of choosing \textquotedblleft systems
of units'' such that some specified universal constants become dimensionless
and take on the numerical value 1. {[}...{]} But it has to be remarked
that the mechanism goes beyond a change of system of units; it is
indeed a change of space of quantities. }{\footnotesize \par}
\end{quotation}

\section{\label{subsec:19} Ring-monoids as scalable monoids}
\begin{defn}
\label{d13}Let $R$ be a ring and $M$ a monoid. A \emph{ring-monoid}
$R\boxtimes M$ is a set $R\times M$ equipped with a binary operation
\[
\ast:\left(R\times M\right)\times\left(R\times M\right)\rightarrow R\times M,\qquad\left(\left\langle \alpha,x\right\rangle ,\left\langle \beta,y\right\rangle \right)\mapsto\left\langle \alpha,x\right\rangle \left\langle \beta,y\right\rangle :=\left\langle \alpha\beta,xy\right\rangle 
\]
and an external binary operation
\[
\cdot:R\times\left(R\times M\right)\rightarrow R\times M,\qquad\left(\lambda,\left\langle \alpha,x\right\rangle \right)\mapsto\lambda\cdot\left\langle \alpha,x\right\rangle :=\left\langle \lambda\alpha,x\right\rangle .
\]
\end{defn}
\begin{prop}
\label{p17}Let $R\boxtimes M$ be a ring-monoid. If $R$ is a commutative
ring, then $R\boxtimes M$ is a scalable monoid over $R$.
\end{prop}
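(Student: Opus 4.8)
The plan is to verify directly that $R\boxtimes M$ satisfies the conditions in Definition \ref{thm:def1}, first checking that $(R\times M,\ast)$ is a monoid and then verifying the three scaling-action axioms, while keeping careful track of exactly where the commutativity of $R$ enters. Since all operations are defined coordinatewise, every verification reduces to identities in $R$ and in $M$ separately, so the proof is essentially routine once the right coordinate bookkeeping is set up.

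First I would establish the monoid structure. Associativity of $\ast$ holds because, for any $\langle\alpha,x\rangle,\langle\beta,y\rangle,\langle\gamma,z\rangle$, both bracketings equal $\langle\alpha\beta\gamma,xyz\rangle$ by associativity of multiplication in $R$ and in $M$. The element $\langle 1_{R},1_{M}\rangle$ is a two-sided identity, since $\langle 1_{R},1_{M}\rangle\langle\alpha,x\rangle=\langle 1_{R}\alpha,1_{M}x\rangle=\langle\alpha,x\rangle$ and symmetrically on the right. Hence $(R\times M,\ast,\langle 1_{R},1_{M}\rangle)$ is a monoid. Note that commutativity is \emph{not} needed here, because the two factors multiplied in the first coordinate come in the same order on both sides of associativity.

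Next I would check the scaling axioms. Axiom (1) is immediate: $1\cdot\langle\alpha,x\rangle=\langle 1\alpha,x\rangle=\langle\alpha,x\rangle$. Axiom (2) follows from associativity in $R$, since $\alpha\cdot(\beta\cdot\langle\gamma,x\rangle)=\langle\alpha(\beta\gamma),x\rangle=\langle(\alpha\beta)\gamma,x\rangle=(\alpha\beta)\cdot\langle\gamma,x\rangle$. For axiom (3) I would compute the three expressions to be compared: $\alpha\cdot(\langle\beta,x\rangle\langle\gamma,y\rangle)=\langle\alpha\beta\gamma,xy\rangle$, then $(\alpha\cdot\langle\beta,x\rangle)\langle\gamma,y\rangle=\langle\alpha\beta,x\rangle\langle\gamma,y\rangle=\langle\alpha\beta\gamma,xy\rangle$, and finally $\langle\beta,x\rangle(\alpha\cdot\langle\gamma,y\rangle)=\langle\beta,x\rangle\langle\alpha\gamma,y\rangle=\langle\beta\alpha\gamma,xy\rangle$.

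The main (and only) obstacle is the last of these comparisons. The identity $\alpha\cdot(xy)=(\alpha\cdot x)y$ holds over an arbitrary ring, but $\alpha\cdot(xy)=x(\alpha\cdot y)$ requires $\alpha\beta\gamma=\beta\alpha\gamma$ in the first coordinate, which is guaranteed precisely by the hypothesis that $R$ is commutative, so that $\alpha\beta=\beta\alpha$. Once commutativity is invoked, all three expressions coincide and axiom (3) holds in full. This completes the verification that $R\boxtimes M$ is a scalable monoid over $R$, and it also makes transparent why commutativity of $R$ is exactly the hypothesis needed: it is forced by the two-sided bilinearity law $\alpha\cdot xy=(\alpha\cdot x)y=x(\alpha\cdot y)$ and by nothing else.
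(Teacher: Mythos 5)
Your proof is correct and follows essentially the same route as the paper's: a direct coordinatewise verification of the monoid axioms and the three scaling axioms, with commutativity of $R$ entering only in the identity $\alpha\cdot\left(xy\right)=x\left(\alpha\cdot y\right)$. Your added observation that commutativity is exactly the hypothesis forced by this two-sided bilinearity law is a nice touch, but the argument itself matches the paper's.
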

\begin{proof}
We have
\begin{gather*}
\left(\left\langle \alpha,x\right\rangle \left\langle \beta,y\right\rangle \right)\left\langle \gamma,z\right\rangle =\left\langle \left(\alpha\beta\right)\gamma,\left(xy\right)z\right\rangle =\left\langle \alpha\left(\beta\gamma\right),x\left(yz\right)\right\rangle =\left\langle \alpha,x\right\rangle \left(\left\langle \beta,y\right\rangle \left\langle \gamma,z\right\rangle \right),\\
\left\langle 1,\boldsymbol{1}\right\rangle \left\langle \alpha,x\right\rangle =\left\langle \alpha,x\right\rangle =\left\langle \alpha,x\right\rangle \left\langle 1,\boldsymbol{1}\right\rangle 
\end{gather*}
for any $\alpha,\beta,\gamma\in R$ and $x,y,z\in M$, so $R\boxtimes M$
is a monoid with $\left\langle 1,\boldsymbol{1}\right\rangle $ as
identity element. Furthermore,
\begin{gather*}
1\cdot\left\langle \alpha,x\right\rangle =\left\langle 1\alpha,x\right\rangle =\left\langle \alpha,x\right\rangle ,\\
\lambda\cdot\left(\kappa\cdot\left\langle \alpha,x\right\rangle \right)=\lambda\cdot\left\langle \kappa\alpha,x\right\rangle =\left\langle \lambda\left(\kappa\alpha\right),x\right\rangle =\left\langle \left(\lambda\kappa\right)\alpha,x\right\rangle =\lambda\kappa\cdot\left\langle \alpha,x\right\rangle ,\\
\lambda\cdot\left\langle \alpha,x\right\rangle \left\langle \beta,y\right\rangle =\lambda\cdot\left\langle \alpha\beta,xy\right\rangle =\left\langle \lambda\left(\alpha\beta\right),xy\right\rangle =\\
\left\langle \left(\lambda\alpha\right)\beta,xy\right\rangle =\left\langle \lambda\alpha,x\right\rangle \left\langle \beta,y\right\rangle =\left(\lambda\cdot\left\langle \alpha,x\right\rangle \right)\left\langle \beta,y\right\rangle ,\\
\left\langle \left(\lambda\alpha\right)\beta,xy\right\rangle =\left\langle \left(\alpha\lambda\right)\beta,xy\right\rangle =\left\langle \alpha\left(\lambda\beta\right),xy\right\rangle =\left\langle \alpha,x\right\rangle \left\langle \lambda\beta,y\right\rangle =\left\langle \alpha,x\right\rangle \left(\lambda\cdot\left\langle \beta,y\right\rangle \right)
\end{gather*}
for any $\alpha,\beta,\lambda,\kappa\in R$ and $x,y\in M$, so $R\boxtimes M$
is a scalable monoid with $\cdot$ a scaling action of $R$ on $R\times M$.
\end{proof}
\begin{prop}
\label{p21}Let a ring-monoid $R\boxtimes M$ be a scalable monoid
over $R$. Then $\mathcal{U}=\left\{ \left\langle 1,x\right\rangle \mid x\in M\right\} $
is a coherent system of unit elements of $R\boxtimes M$.
\end{prop}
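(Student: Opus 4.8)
The plan is to unpack the definition of a coherent system of unit elements (Definition \ref{d12}) into its constituent requirements and verify each in turn against the explicit arithmetic of the ring-monoid. By Definitions \ref{d12} and \ref{d8}, I must show that $\mathcal{U}$ is simultaneously (i) a submonoid of $R\boxtimes M$, (ii) a set of unit elements, i.e.\ each of its members is a unit element for some orbit class, (iii) dense, and (iv) sparse. Once these four facts are in place, $\mathcal{U}$ is a submonoid which is a dense sparse set of unit elements, which is exactly a coherent system of unit elements.

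The cornerstone of the whole argument is a clean description of the congruence $\sim$ on $R\boxtimes M$. First I would compute, for $\alpha,\beta\in R$ and $x,y\in M$, when $\langle\alpha,x\rangle\sim\langle\beta,y\rangle$ holds. By Definition \ref{d3.1} this means $\gamma\cdot\langle\alpha,x\rangle=\delta\cdot\langle\beta,y\rangle$ for some $\gamma,\delta\in R$, that is, $\langle\gamma\alpha,x\rangle=\langle\delta\beta,y\rangle$, which forces $x=y$; conversely, if $x=y$ then taking $\gamma=\delta=0$ gives $\langle 0,x\rangle=\langle 0,y\rangle$. Hence $\langle\alpha,x\rangle\sim\langle\beta,y\rangle$ \emph{if and only if} $x=y$, so the orbit class of $\langle\alpha,x\rangle$ is exactly $\mathsf{C}_x=\{\langle\rho,x\rangle\mid\rho\in R\}$, indexed by $M$. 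This single observation makes (ii)--(iv) almost immediate.

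With this in hand, for (i) I note that $\langle 1,\boldsymbol{1}\rangle\in\mathcal{U}$ and $\langle 1,x\rangle\langle 1,y\rangle=\langle 1,xy\rangle\in\mathcal{U}$, so $\mathcal{U}$ is a submonoid. For (ii) I verify that $\langle 1,x\rangle$ is a unit element for $\mathsf{C}_x$: it is a generating element, since any $\langle\rho,x\rangle=\rho\cdot\langle 1,x\rangle$, and the representation is unique, since $\rho\cdot\langle 1,x\rangle=\rho'\cdot\langle 1,x\rangle$ reads $\langle\rho,x\rangle=\langle\rho',x\rangle$, forcing $\rho=\rho'$. For (iii), any $\langle\alpha,x\rangle$ satisfies $\langle 1,x\rangle\sim\langle\alpha,x\rangle$ by the characterization and $\langle 1,x\rangle\in\mathcal{U}$, so $\mathcal{U}$ is dense; for (iv), if $\langle 1,x\rangle\sim\langle 1,y\rangle$ with both in $\mathcal{U}$, then $x=y$, whence $\langle 1,x\rangle=\langle 1,y\rangle$, so $\mathcal{U}$ is sparse. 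Combining the four facts yields the claim.

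I do not anticipate a genuine obstacle here: every step is a direct computation in the given operations. If anything is delicate, it is getting the characterization of $\sim$ exactly right — in particular remembering that $\gamma=\delta=0$ is permitted, so that the first-component condition $\gamma\alpha=\delta\beta$ imposes no constraint at all, leaving $x=y$ as the sole requirement and thereby collapsing each orbit class onto a single fibre $\mathsf{C}_x$.
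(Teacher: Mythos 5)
Your proof is correct and takes essentially the same route as the paper's: direct verification that $\mathcal{U}$ is a submonoid, that each $\langle 1,x\rangle$ generates its orbit class with unique coefficients, and that $\mathcal{U}$ is dense and sparse, all by the same pair-component computations. The only difference is organizational \textendash{} you isolate the characterization $\langle\alpha,x\rangle\sim\langle\beta,y\rangle \Leftrightarrow x=y$ up front (the paper leaves it implicit in its sparseness computation), which if anything makes the ``generating element'' step, i.e.\ that the orbit class of $\langle 1,x\rangle$ is exactly $\left\{ \langle\rho,x\rangle\mid\rho\in R\right\}$, more explicit than in the paper.
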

\begin{proof}
We have $\left\langle \alpha,x\right\rangle =\alpha\cdot\left\langle 1,x\right\rangle $
for any $\left\langle \alpha,x\right\rangle \in R\times M$, and if
$\alpha\cdot\left\langle 1,x\right\rangle =\alpha'\cdot\left\langle 1,x\right\rangle $
then $\left\langle \alpha,x\right\rangle =\left\langle \alpha',x\right\rangle $,
so $\alpha=\alpha'$. Also, $1\cdot\left\langle \alpha,x\right\rangle =\left\langle \alpha,x\right\rangle =\alpha\cdot\left\langle 1,x\right\rangle $,
so $\left\langle \alpha,x\right\rangle \sim\left\langle 1,x\right\rangle $.
Furthermore, if $\left\langle 1,x\right\rangle \sim\left\langle 1,y\right\rangle $
so that $\alpha\cdot\left\langle 1,x\right\rangle =\beta\cdot\left\langle 1,y\right\rangle $
for some $\alpha,\beta\in R$ then $\left\langle \alpha,x\right\rangle =\left\langle \beta,y\right\rangle $,
so $x=y$, so $\left\langle 1,x\right\rangle =\left\langle 1,y\right\rangle $.
Hence, $\mathcal{U}$ is a system of unit elements of $R\boxtimes M$. 

Finally, if $\left\langle 1,x\right\rangle ,\left\langle 1,y\right\rangle \in\mathcal{U}$
then $x,y,xy\in M$, so $\left\langle 1,x\right\rangle \left\langle 1,y\right\rangle =\left\langle 1,xy\right\rangle \in\mathcal{U}$,
and $\left\langle 1,\boldsymbol{1}\right\rangle \in\mathcal{U}$ since
$\boldsymbol{1}\in M$. Hence, $\mathcal{U}$ is a submonoid of $R\boxtimes M$.
\end{proof}
Together, Propositions \ref{p17}, \ref{p21}, \ref{p11} and \ref{p91}
imply the following fact:
\begin{prop}
A ring-monoid $R\boxtimes M$ is an additive scalable monoid if and
only if $R$ is commutative.
\end{prop}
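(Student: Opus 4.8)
The plan is to prove the two implications separately, chaining together the results already at hand.

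\emph{Sufficiency} (if $R$ is commutative then $R\boxtimes M$ is an additive scalable monoid). First I would invoke Proposition \ref{p17}, whose hypothesis is exactly the commutativity of $R$, to conclude that the ring-monoid $R\boxtimes M$ is a scalable monoid over $R$. With that in place, Proposition \ref{p21} applies and supplies the coherent system of unit elements $\mathcal{U}=\left\{\langle 1,x\rangle\mid x\in M\right\}$. By the remark following Definition \ref{d12}, a coherent system of unit elements is automatically dense, sparse and closed; in particular $\mathcal{U}$ is a dense closed set of unit elements. Proposition \ref{p11} then immediately upgrades $R\boxtimes M$ to an additive scalable monoid, completing this direction.

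\emph{Necessity} (if $R\boxtimes M$ is an additive scalable monoid then $R$ is commutative). By Definition \ref{d9} an additive scalable monoid is in particular a scalable monoid, so Proposition \ref{p21} again applies and shows that each $\langle 1,x\rangle$, and in particular $\langle 1,\boldsymbol{1}\rangle$, is a unit element for its orbit class. The idea is to feed this unit element into Proposition \ref{p91}, which forces $R$ to be commutative. The one point needing care is that Proposition \ref{p91} requires a \emph{non-trivial} orbit class. I would therefore compute the zero element of $\mathsf{C}=[\langle 1,\boldsymbol{1}\rangle]$ directly from the scaling action as $0\cdot\langle 1,\boldsymbol{1}\rangle=\langle 0,\boldsymbol{1}\rangle$; this is distinct from the unit element $\langle 1,\boldsymbol{1}\rangle$ precisely when $0\neq 1$ in $R$, that is, when $R$ is non-trivial. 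Hence, for non-trivial $R$, the orbit class $\mathsf{C}$ is non-trivial and carries a unit element, and Proposition \ref{p91} yields the commutativity of $R$.

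The main obstacle is thus the degenerate case of the trivial (zero) ring, where every orbit class collapses onto its zero element and Proposition \ref{p91} has no non-trivial orbit class on which to act. This case is harmless: the trivial ring is commutative by default, so the desired conclusion that $R$ is commutative holds vacuously, and I would dispose of it in a single sentence rather than with any genuine argument. Combining the two implications then gives the stated biconditional; I note in passing that the same reasoning shows that a ring-monoid is a scalable monoid at all if and only if $R$ is commutative, the additive structure coming for free via Propositions \ref{p21} and \ref{p11}.
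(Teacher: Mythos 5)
Your proof is correct and follows essentially the same route as the paper, whose entire proof consists of citing Propositions \ref{p17}, \ref{p21}, \ref{p11} and \ref{p91} in exactly the combination you spell out. Your explicit treatment of the non-triviality hypothesis in Proposition \ref{p91} (via $0\cdot\left\langle 1,\boldsymbol{1}\right\rangle =\left\langle 0,\boldsymbol{1}\right\rangle \neq\left\langle 1,\boldsymbol{1}\right\rangle$ for non-trivial $R$, with the trivial ring handled separately) fills in a detail the paper leaves implicit.
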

If $R$ is in addition an ordered ring then $R\boxtimes M$ is an
ordered additive scalable monoid by Proposition \ref{p15}. 

Definition \ref{d13} is thus a construction-definition of a scalable
monoid in the case when $R$ is commutative. In Part \ref{sec:Quantity-spaces},
it will become obvious that a ring monoid $R\boxtimes M$, where $R$
is a field and $M$ is a free abelian group, is a quantity space.
This is similar to the construction-definition of a quantity space
given by Carlson \cite{CARL}.\footnote{Recall that Carlson considers ring-monoids of the form $\mathbb{R}\boxtimes G$
where $G$ is an abelian group equipped with an external operation
\begin{gather*}
\mathbb{Q}\times G\rightarrow G,\qquad\left(t,x\right)\mapsto x^{t}
\end{gather*}
such that $G$ is a multiplicatively written vector space over $\mathbb{Q}$,
specifically assumed to be finite-dimensional. This is an unnecessary
assumption, however; it suffices to assume that $G$ is a free module
over $\mathbb{Z},$ or equivalently a free abelian group. In Raposo's
definition of a quantity space \cite{RAP}, Carlson's vector space
of ''pre-units'' is replaced by a finitely generated free abelian
group of dimensions.} 

\part{\label{sec:Quantity-spaces}Quantity spaces}

A quantity space is basically a scalable monoid equipped with property
(A2) in Section \ref{sec:1-4}. Section \ref{s21} below introduces
the formal definition of quantity spaces; some basic facts about quantity
spaces are presented in Section \ref{s23}. 

Systems of unit quantities for quantity spaces are discussed in Section
\ref{s24}. The notion of a measure of a quantity is formally defined
in Section \ref{s25-1}, and ways in which measures serve as proxies
for quantities are described. 

In Section \ref{s26}, we show that the monoid of dimensions $Q/{\sim}$
corresponding to a quantity space $Q$ is a free abelian group and
that bases in $Q$ and $Q/{\sim}$ have the same cardinality. 

A representation of quantity spaces in terms of Laurent monomials
in $n$ indeterminates is presented in Section \ref{sec:3-16}. 

\section{\label{s21}From scalable monoids to quantity spaces}

In this section, we specialize scalable monoids in order to obtain
a mathematical model suitable for calculation with quantities, a quantity
space. The results in Sections \ref{sec:2-9} and \ref{s25} strongly
suggest that a scalable monoid serving this purpose should be equipped
with a sufficiently well-behaved set of unit elements. The simplest
approach is to require that a quantity space is equipped with a coherent
system of unit elements, which is a dense, sparse, closed and consistent
set of unit elements.

A coherent system of unit elements of a scalable monoid corresponds
to what is called a coherent system of units in metrology. There,
coherent systems of units are commonly derived from sets of so-called
base units, such as the three base units in the CGS system. The notion
corresponding to a set of base units here is a basis in a quantity
space, analogous to that of a basis in a vector space or a free abelian
group.

If, for example, $B=\left\{ u_{1},u_{2},u_{3}\right\} $ is a set
of base units, then the set of all quantities of the form $u_{1}^{k_{1}}u_{2}^{k_{2}}u_{3}^{k_{3}}$,
where $k_{i}$ are integers and $u_{i}^{0}=\boldsymbol{1}$, is a
coherent system of units derived from $B$, provided that the product
of $u_{1}^{i_{1}}u_{2}^{i_{2}}u_{3}^{i_{3}}$ and $u_{1}^{j_{1}}u_{2}^{j_{2}}u_{3}^{j_{3}}$
is equal to a term of the form $u_{1}^{k_{1}}u_{2}^{k_{2}}u_{3}^{k_{3}}$.
It is natural to require that
\[
\left(u_{1}^{i_{1}}u_{2}^{i_{2}}u_{3}^{i_{3}}\right)\left(u_{1}^{j_{1}}u_{2}^{j_{2}}u_{3}^{j_{3}}\right)=u_{1}^{\left(i_{1}+j_{1}\right)}u_{2}^{\left(i_{2}+j_{2}\right)}u_{3}^{\left(i_{3}+j_{3}\right)},
\]
but this identity implies that the base units commute. (For example,
$u_{1}u_{2}=u_{1}^{1}u_{2}^{1}u_{3}^{0}=\left(u_{1}^{0}u_{2}^{1}u_{3}^{0}\right)\left(u_{1}^{1}u_{2}^{0}u_{3}^{0}\right)=u_{2}u_{1}$.)
Thus, if we want to include in our model the feature that a coherent
system of units can be derived from a system of base units, there
are good reasons to require the specialized scalable monoids to be
commutative. 

As we have seen, if $X$ is a scalable monoid over $R$ that has a
unit element then $R$ is commutative. Furthermore, if we want to
deal with derived units such as $u_{1}^{1}u_{2}^{-1}u_{3}^{0}$ (meter
per second, etc.) then inverses of units must be admitted, and there
is a close connection between inverses of quantities in a scalable
monoid over $R$ and multiplicative inverses in $R$. This suggests,
finally, that quantity spaces should be defined as certain scalable
monoids over fields.

We now come to the basic definitions motivated by the considerations
above.
\begin{defn}
\label{d2.3}Let $Q$ be a commutative scalable monoid over $R$.
A \emph{finite} \emph{set of generators} for $Q$ is a set $B=\left\{ b_{1},\ldots,b_{n}\right\} $
of elements of $Q$ such that every $x\in Q$ has an expansion 
\[
x=\mu\cdot\prod_{i=1}^{n}b_{i}^{_{_{k_{i}}}},
\]
where $\mu\in R$ and $k_{1},\dots,k_{n}$ are integers. A \emph{finite
basis} for $Q$ is a finite set of generators for $Q$ such that every
$x\in Q$ has a \emph{unique} expansion of this form, and a \emph{strong}
finite basis for $Q$ is a finite basis such that every basis element
is invertible.
\end{defn}
Note that the uniqueness of the expansion means that the array $\left(\mu,k_{1},\ldots,k_{n}\right)$
is unique given an indexing $B\rightarrow\left\{ 1,\ldots,n\right\} $
of the basis elements. We say that a commutative scalable monoid with
a (strong) finite basis is \emph{(strongly) free}. 
\begin{defn}
\label{d2.4}A finitely generated\emph{ quantity space} is a commutative
scalable monoid $Q$ over a field, such that there exists a strong\emph{
}finite\emph{ }basis\emph{ }for\emph{ $Q$.}
\end{defn}
Elements of a quantity space are called \emph{quantities,} unit elements
are called \emph{unit quantities}, and orbit classes in a quantity
space are called \emph{dimensions}.

It is not very complicated to generalize the notion of a finite basis
for a commutative scalable monoid to include possibly infinite bases,
and thus to generalize finitely generated quantity spaces accordingly,
but in view of the connection to metrology only the special case of
finitely generated quantity spaces will be considered below.

\section{\label{s23}Some basic facts about quantity spaces}
\begin{prop}
\label{p19a}Let $Q$ be a quantity space with a strong basis $\left\{ b_{1},\ldots,b_{n}\right\} $
and $x,y\in Q$. We have
\begin{enumerate}
\item $\boldsymbol{1}=1\cdot\prod_{i=1}^{n}b_{i}^{0}$\emph{;}
\item if $x=\mu\cdot\prod_{i=1}^{n}b_{i}^{_{_{k_{i}}}}$ and $y=\nu\cdot\prod_{i=1}^{n}b_{i}^{\ell_{i}}$
then
\[
xy=\mu\nu\cdot\prod_{i=1}^{n}b_{i}^{_{\left(k_{i}+\ell_{i}\right)}};
\]
\item if $x=\mu\cdot\prod_{i=1}^{n}b_{i}^{_{_{k_{i}}}}$ and $\mu\neq0$
then $x^{-1}=\frac{1}{\mu}\cdot\prod_{i=1}^{n}b_{i}^{_{_{-k_{i}}}}$.
\end{enumerate}
\end{prop}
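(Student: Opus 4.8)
The plan is to establish the three identities in sequence, since (3) will lean on both (1) and (2), and (2) in turn rests on Lemma \ref{thm:lem1} together with commutativity of $Q$.

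First I would dispatch (1) by unwinding definitions. By the convention fixed immediately after Definition \ref{thm:def1}, $b_i^0=\boldsymbol{1}$ for each $i$, so the product $\prod_{i=1}^n b_i^0$ is a product of copies of the monoid identity and hence equals $\boldsymbol{1}$. Condition (1) of Definition \ref{thm:def1} then gives $1\cdot\boldsymbol{1}=\boldsymbol{1}$, which is exactly the asserted expansion of $\boldsymbol{1}$.

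For (2), I would first separate the scalar factors from the monoid factors. Writing $p=\prod_{i=1}^n b_i^{k_i}$ and $q=\prod_{i=1}^n b_i^{\ell_i}$, Lemma \ref{thm:lem1} gives $(\mu\cdot p)(\nu\cdot q)=\mu\nu\cdot pq$, so it remains to show that $pq=\prod_{i=1}^n b_i^{(k_i+\ell_i)}$ as an identity in the monoid $Q$. Here I would invoke commutativity of $Q$ together with the fact that each $b_i$ is invertible (the basis is strong): the elements $b_1,\ldots,b_n$ then lie in the abelian group of invertible elements of $Q$, where the power law $b_i^{k_i}b_i^{\ell_i}=b_i^{(k_i+\ell_i)}$ holds for all integers $k_i,\ell_i$ and the factors may be freely reordered. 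Collecting like bases yields the claimed product.

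Finally, for (3) I would exhibit the inverse directly. Since $R$ is a field and $\mu\neq 0$, the element $z=\frac{1}{\mu}\cdot\prod_{i=1}^n b_i^{-k_i}$ is well-defined. Applying (2) to $x$ and $z$ gives $xz=\mu\cdot\frac{1}{\mu}\cdot\prod_{i=1}^n b_i^{(k_i-k_i)}=1\cdot\prod_{i=1}^n b_i^0$, which equals $\boldsymbol{1}$ by (1); commutativity gives $zx=xz=\boldsymbol{1}$, so $z$ is the (necessarily unique) inverse $x^{-1}$. The routine but genuinely load-bearing step is the index bookkeeping in (2): one must take care that $b^m b^n=b^{m+n}$ is being used for \emph{all} integers, including negative exponents, which is legitimate precisely because the $b_i$ are invertible. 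Everything else is a direct appeal to Lemma \ref{thm:lem1}, commutativity, and the field structure.
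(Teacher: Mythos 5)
Your proposal is correct and follows essentially the same route as the paper's (much terser) proof: (1) from the convention $b_i^0=\boldsymbol{1}$, (2) from Lemma \ref{thm:lem1} together with commutativity, and (3) by combining (1) and (2). Your explicit remark that invertibility of the $b_i$ is what legitimizes the power law $b^m b^n=b^{m+n}$ for negative integer exponents is a detail the paper leaves implicit, but it is the same argument.
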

\begin{proof}
To prove (1), note that $b_{i}^{0}=\boldsymbol{1}$ for all $b_{i}$.
(2) follows from Lemma \ref{thm:lem1} and the fact that $Q$ is commutative.
(3) follows from (1) and (2).
\end{proof}
\begin{prop}
\label{p19}If $Q$ be a (finitely generated) quantity space then
every element of a basis $B$ for $Q$ is non-zero.
\end{prop}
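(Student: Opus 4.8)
The plan is to argue by contradiction, using the uniqueness built into the notion of a basis (Definition \ref{d2.3}) together with the fact that the underlying field is non-trivial, so that $0\neq 1$. Recall that an element $x$ of a scalable monoid is a zero element exactly when $x=0_{[x]}=0\cdot x$; thus to say that a basis element $b_j$ is non-zero is to say $b_j\neq 0\cdot b_j$. So I would fix an arbitrary index $j$ and suppose, toward a contradiction, that $b_j=0\cdot b_j$.

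The key is to exhibit two expansions of $b_j$ with the same exponents but different leading scalars. Since $b_i^{0}=\boldsymbol{1}$ and $1\cdot x=x$, the element $b_j$ has the tautological expansion
\[
b_j=1\cdot\prod_{i=1}^{n}b_i^{k_i},\qquad k_j=1,\; k_i=0 \text{ for } i\neq j,
\]
so in particular $\prod_{i=1}^{n}b_i^{k_i}=b_j$. Feeding this into the assumption $b_j=0\cdot b_j$ produces a second expansion with the same exponent string,
\[
b_j=0\cdot b_j=0\cdot\prod_{i=1}^{n}b_i^{k_i}.
\]
Both displays are expansions of $b_j$ of the form demanded by Definition \ref{d2.3}, differing only in that the first carries leading scalar $1$ and the second leading scalar $0$.

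Because $B$ is a basis, the array $(\mu,k_1,\ldots,k_n)$ attached to $b_j$ is unique, so comparing the two expansions forces $1=0$ in the field over which $Q$ is defined. A field is non-trivial, so $1\neq 0$, and this contradiction shows that $b_j$ cannot be a zero element; as $j$ was arbitrary, every element of $B$ is non-zero. The argument has no serious obstacle beyond the bookkeeping point that both lines genuinely qualify as basis expansions with \emph{identical} exponent strings — which is precisely what allows the uniqueness clause of Definition \ref{d2.3} to be invoked so as to compare only the leading scalars.
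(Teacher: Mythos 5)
Your proof is correct and follows essentially the same route as the paper: both arguments exhibit the two expansions $1\cdot b_j$ and $0\cdot b_j$ of a supposedly zero basis element, with identical exponent strings, and invoke the uniqueness clause of Definition \ref{d2.3} together with $0\neq 1$ in a field to reach a contradiction. The only difference is presentational (you argue by contradiction on a basis element being zero, while the paper argues contrapositively that a set containing a zero element cannot be a basis), which is immaterial.
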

\begin{proof}
We have $0_{\mathsf{C}}=0\cdot x$ and $x=1\cdot x$ for all $x\in\mathsf{C}$,
so if $0_{\mathsf{C}}\in B$ then $0_{\mathsf{C}}=0\cdot0_{\mathsf{C}}$
and $0_{\mathsf{C}}=1\cdot0_{\mathsf{C}}$ are expansions of $0_{\mathsf{C}}$
in terms of $B$, so $0_{\mathsf{C}}$ does not have a unique expansion
in terms of $B$ since $0\neq1$ in a field, so $B$ is not a basis
for $Q$.
\end{proof}
\begin{prop}
\label{p20}If $Q$ be a quantity space with a basis $\left\{ b_{1},\ldots b_{n}\right\} $
then $x\in Q$ is a non-zero quantity if and only if $\mu\neq0$ in
the expansion $x=\mu\cdot\prod_{i=1}^{n}b_{i}^{_{_{k_{i}}}}$.
\end{prop}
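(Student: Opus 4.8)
The plan is to prove the biconditional in two implications, first pinning down that ``non-zero'' here means that $x$ differs from the zero element $0_{[x]}=0\cdot x$ of its own orbit class, as introduced in Section~\ref{subsec:13}. I fix the basis expansion $x=\mu\cdot\prod_{i=1}^{n}b_{i}^{k_{i}}$ and abbreviate $w=\prod_{i=1}^{n}b_{i}^{k_{i}}$, so that $x=\mu\cdot w$.

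For the implication ``$\mu=0\Rightarrow x=0_{[x]}$'' I would argue directly. If $\mu=0$ then $x=0\cdot w$; by Corollary~\ref{c31} we have $0\cdot w\sim w$, hence $[x]=[w]$, and by the definition of the zero element $0_{[w]}=0\cdot w=x$. Thus $x=0_{[x]}$, i.e.\ $x$ is the zero quantity of its orbit class. Contrapositively, if $x$ is non-zero then $\mu\neq0$.

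For the converse, ``$\mu\neq0\Rightarrow x\neq0_{[x]}$'', the idea is to exploit invertibility. If $\mu\neq0$ then Proposition~\ref{p19a}(3) supplies an inverse $x^{-1}=\mu^{-1}\cdot\prod_{i=1}^{n}b_{i}^{-k_{i}}$ with $xx^{-1}=\boldsymbol{1}$. Assuming for contradiction that $x=0_{[x]}=0\cdot x$, I multiply by $x^{-1}$ and apply the bilinearity law of Definition~\ref{thm:def1} to get $\boldsymbol{1}=(0\cdot x)\,x^{-1}=0\cdot(xx^{-1})=0\cdot\boldsymbol{1}$. But Proposition~\ref{p19a}(1) gives the expansion $\boldsymbol{1}=1\cdot\prod_{i=1}^{n}b_{i}^{0}$, whereas $0\cdot\boldsymbol{1}=0\cdot\prod_{i=1}^{n}b_{i}^{0}$ is a second expansion of $\boldsymbol{1}$ with scalar coefficient $0$. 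Uniqueness of the expansion relative to a basis (Definition~\ref{d2.3}) then forces $1=0$, contradicting $0\neq1$ in a field. Hence $x\neq0_{[x]}$.

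The step I expect to be the crux is this converse: one must first turn the purely ``coefficient-level'' hypothesis $\mu\neq0$ into the genuinely multiplicative fact that $x$ is invertible, and then route the resulting identity $\boldsymbol{1}=0\cdot\boldsymbol{1}$ back through the \emph{uniqueness} clause of the basis definition to extract the field-theoretic contradiction $0=1$. The remaining manipulations are routine bookkeeping with the scaling axioms and the relation $\sim$.
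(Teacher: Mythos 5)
Your forward direction ($\mu=0$ implies $x$ equals the zero element $0\cdot x$ of its orbit class) is correct and is essentially the paper's own argument, just with the orbit-class bookkeeping spelled out. The problem is in your converse. Proposition \ref{p19a}(3), which you invoke to produce $x^{-1}=\mu^{-1}\cdot\prod_{i=1}^{n}b_{i}^{-k_{i}}$, is stated for a \emph{strong} basis, whereas Proposition \ref{p20} assumes only a basis in the sense of Definition \ref{d2.3}. At this point in the paper these are still distinct notions: the fact that every basis of a quantity space is strong is established only later, by combining Propositions \ref{p19} and \ref{p22-1}, and Proposition \ref{p22-1} itself leans on Proposition \ref{p20}. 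So for a basis not yet known to be strong, the element $\prod_{i=1}^{n}b_{i}^{-k_{i}}$ need not exist at all (a negative power of a non-invertible $b_{i}$ is undefined), and the step from $\mu\neq0$ to invertibility of $x$ is unjustified; appealing instead to the later result that all bases are strong would be circular.

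The repair is to drop invertibility altogether, which is what the paper does: since $0\cdot x=0\cdot\left(\mu\cdot\prod_{i=1}^{n}b_{i}^{k_{i}}\right)=0\cdot\prod_{i=1}^{n}b_{i}^{k_{i}}$, if $x$ were zero, i.e.\ $x=0\cdot x$, then $x=\mu\cdot\prod_{i=1}^{n}b_{i}^{k_{i}}$ and $x=0\cdot\prod_{i=1}^{n}b_{i}^{k_{i}}$ would be two expansions of $x$ with the same exponents, so the uniqueness clause of Definition \ref{d2.3} forces $\mu=0$; contrapositively, $\mu\neq0$ gives $x$ non-zero. This argument applies uniqueness to the expansion of $x$ itself rather than routing the degeneracy through $\boldsymbol{1}$, needs no inverses, and so works for an arbitrary basis. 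Your idea of pushing the degeneracy to $\boldsymbol{1}$ via an inverse and reading off $0=1$ is sound once a strong basis is in hand (it is close in spirit to the proof of Proposition \ref{p22-1}), but it is both more roundabout and strictly less general than what the statement as given requires.
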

\begin{proof}
We have $0\cdot x=0\cdot\left(\mu\cdot\prod_{i=1}^{n}b_{i}^{_{_{k_{i}}}}\right)=0\cdot\prod_{i=1}^{n}b_{i}^{_{_{k_{i}}}}$,
so if $\mu=0$ then $0\cdot x=x$ and if $0\cdot x=x=\mu\cdot\prod_{i=1}^{n}b_{i}^{_{_{k_{i}}}}$
then $\mu=0$ since the expansion of $x$ is unique.
\end{proof}
In particular, $\mathbf{1}$ is a non-zero quantity.
\begin{prop}
\label{p23}If $Q$ is a (finitely generated) quantity space then
the product of non-zero quantities in $Q$ is a non-zero quantity.
\end{prop}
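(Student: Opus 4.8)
The plan is to reduce the statement entirely to the scalar-coefficient characterization of non-zero quantities established in Proposition \ref{p20}, together with the product formula from Proposition \ref{p19a}. Fix a strong basis $\left\{b_{1},\ldots,b_{n}\right\}$ for $Q$, which exists by Definition \ref{d2.4}, and let $x,y\in Q$ be non-zero quantities. By the uniqueness of expansions we may write $x=\mu\cdot\prod_{i=1}^{n}b_{i}^{k_{i}}$ and $y=\nu\cdot\prod_{i=1}^{n}b_{i}^{\ell_{i}}$ with $\mu,\nu$ in the underlying field and the exponents integers.

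Next I would invoke Proposition \ref{p20}: since $x$ and $y$ are non-zero, the scalar coefficients satisfy $\mu\neq0$ and $\nu\neq0$. Applying the product formula of Proposition \ref{p19a}(2) then gives
\[
xy=\mu\nu\cdot\prod_{i=1}^{n}b_{i}^{\left(k_{i}+\ell_{i}\right)},
\]
so this is an expansion of $xy$ in terms of the basis, and by uniqueness it is \emph{the} expansion, with scalar coefficient $\mu\nu$.

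Finally, because $Q$ is a scalable monoid over a field and a field has no zero divisors, $\mu\neq0$ and $\nu\neq0$ force $\mu\nu\neq0$. Hence the scalar coefficient in the expansion of $xy$ is non-zero, and another application of Proposition \ref{p20} shows that $xy$ is a non-zero quantity, as required.

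There is really no serious obstacle here: the argument is a direct composition of Propositions \ref{p20} and \ref{p19a}(2), and the only substantive ingredient beyond those is the absence of zero divisors in the field, which is precisely the point at which the hypothesis that the scalar ring is a field (rather than an arbitrary commutative ring) is used. The one thing to state carefully is that the coefficient $\mu\nu$ genuinely is the coefficient in the \emph{unique} basis expansion of $xy$, so that Proposition \ref{p20} may be applied to $xy$; this follows from the uniqueness built into the definition of a basis.
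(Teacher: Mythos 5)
Your proof is correct and follows essentially the same route as the paper's: expand $x$ and $y$ relative to a basis, apply the product formula of Proposition \ref{p19a}(2), and conclude $\mu\nu\neq0$ from the absence of zero divisors in the field. The paper's version is simply terser, leaving the appeals to Proposition \ref{p20} and to uniqueness of the expansion implicit, whereas you spell them out.
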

\begin{proof}
Set $x=\mu\cdot\prod_{i=1}^{n}b_{i}^{_{_{k_{i}}}}$ and $y=\nu\cdot\prod_{i=1}^{n}b_{i}^{_{\ell_{i}}}$.
Then $xy=\mu\nu\cdot\prod_{i=1}^{n}b_{i}^{\left(k_{i}+\ell_{i}\right)}$,
and $\mu\nu\neq0$ since there are no zero divisors in a field.
\end{proof}
\begin{prop}
\label{p22-1}If $Q$ be a (finitely generated) quantity space then
$x\in Q$ is invertible if and only if $x$ is non-zero.
\end{prop}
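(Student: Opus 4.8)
The plan is to derive this immediately from Propositions \ref{p19a} and \ref{p20}, since both directions reduce to a statement about the scalar coefficient $\mu$ in the unique expansion $x=\mu\cdot\prod_{i=1}^{n}b_{i}^{k_{i}}$ relative to a strong basis $\left\{ b_{1},\ldots,b_{n}\right\}$ for $Q$.

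First, for the implication that a non-zero quantity is invertible, I would fix such a basis and write $x=\mu\cdot\prod_{i=1}^{n}b_{i}^{k_{i}}$. If $x$ is non-zero, Proposition \ref{p20} gives $\mu\neq0$. Since $Q$ is a scalable monoid over a field, the inverse $1/\mu$ exists, so Proposition \ref{p19a}(3) supplies the explicit inverse $x^{-1}=\frac{1}{\mu}\cdot\prod_{i=1}^{n}b_{i}^{-k_{i}}$, and hence $x$ is invertible.

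For the converse, suppose $x$ is invertible, say $xy=\boldsymbol{1}$ for some $y\in Q$. Writing $y=\nu\cdot\prod_{i=1}^{n}b_{i}^{\ell_{i}}$, Proposition \ref{p19a}(2) gives $xy=\mu\nu\cdot\prod_{i=1}^{n}b_{i}^{k_{i}+\ell_{i}}$. Comparing with $\boldsymbol{1}=1\cdot\prod_{i=1}^{n}b_{i}^{0}$ from Proposition \ref{p19a}(1) and invoking uniqueness of expansions, I conclude $\mu\nu=1$; since a field has no zero divisors, this forces $\mu\neq0$, so $x$ is non-zero by Proposition \ref{p20}. Alternatively, one can avoid expansions entirely: if $x$ were a zero element $0_{\left[x\right]}$, then the absorption identity $0_{\left[x\right]}y=0_{\left[xy\right]}$ would make $xy$ a zero element, contradicting that $\boldsymbol{1}$ is non-zero.

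Neither direction presents a genuine obstacle; the only point requiring care is the converse, where one must rule out $\mu=0$ for an invertible $x$. The field hypothesis is precisely what does the work here, since it is the absence of zero divisors (equivalently, the invertibility of every non-zero scalar) that makes ``non-zero'' and ``invertible'' coincide, whereas over a general ring a non-zero quantity whose coefficient $\mu$ is not a unit would fail to have an inverse.
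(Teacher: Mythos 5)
Your proof is correct and takes essentially the same route as the paper: the forward direction exhibits the explicit inverse $\frac{1}{\mu}\cdot\prod_{i=1}^{n}b_{i}^{-k_{i}}$, and your converse is just the direct form of the paper's contrapositive (the paper argues that $\mu=0$ admits no $\nu$ with $\mu\nu=1$, while you expand a hypothetical inverse and use uniqueness of expansions to force $\mu\nu=1$). The only nitpick is that ruling out $\mu=0$ needs nothing about zero divisors, merely that $0\neq1$ in a field; otherwise the argument, including your aside via the absorption identity $0_{\left[x\right]}y=0_{\left[xy\right]}$, is sound.
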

\begin{proof}
Let $\left\{ b_{1},\ldots b_{n}\right\} $ be a strong basis for $Q$
so that $x=\mu\cdot\prod_{i=1}^{n}b_{i}^{_{_{k_{i}}}}$. If $\mu\neq0$
then $\frac{1}{\mu}\cdot\prod_{i=1}^{n}b_{i}^{_{_{-k_{i}}}}$ is an
inverse of $x$; conversely, if $\mu=0$ then there is no $\nu$ such
that $\mu\nu=1$ in a field, so $x$ does not have an inverse $\nu\cdot\prod_{i=1}^{n}b_{i}^{\ell_{i}}$.
\end{proof}
Combining Propositions \ref{p19} and \ref{p22-1}, we obtain the
following result. 
\begin{prop}
Every element of a basis for a (finitely generated) quantity space
is invertible.
\end{prop}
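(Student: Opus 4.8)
The plan is to combine the two immediately preceding propositions, since essentially all the content has been front-loaded into them. By Definition \ref{d2.4}, a finitely generated quantity space $Q$ comes equipped with at least one \emph{strong} finite basis, and Proposition \ref{p22-1} uses such a basis to establish the key fact that an element of $Q$ is invertible precisely when it is non-zero. The crucial observation is that this characterization of invertibility is a statement about all of $Q$, independent of which particular basis was used to derive it.

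First I would fix an arbitrary finite basis $B$ for $Q$, emphasizing that $B$ is \emph{not} assumed to be strong. Let $b$ be any element of $B$. By Proposition \ref{p19}, every element of a basis is non-zero, so $b$ is a non-zero quantity. Then Proposition \ref{p22-1} applies directly to $b$ and yields that $b$ is invertible. Since $b \in B$ was arbitrary, every element of $B$ is invertible.

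There is no genuine obstacle here; the only point worth flagging is the subtlety just mentioned. One must be careful that Proposition \ref{p22-1} is invoked as a statement valid for every non-zero element of $Q$ — which in turn rests on the strong basis guaranteed by Definition \ref{d2.4} — rather than as a statement tied to the basis $B$ at hand, which need not be strong a priori. With that understood, the transfer is immediate. In effect, this proposition shows that the adjective \emph{strong} in the definition of a quantity space is redundant: every finite basis of a finitely generated quantity space is automatically strong.
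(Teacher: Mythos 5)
Your proof is correct and is exactly the paper's argument: the paper derives this proposition by combining Proposition \ref{p19} (every basis element is non-zero) with Proposition \ref{p22-1} (non-zero if and only if invertible), just as you do. Your added remark that Proposition \ref{p22-1} holds for all of $Q$ via the strong basis guaranteed by Definition \ref{d2.4}, independently of the arbitrary basis $B$ under consideration, is a correct and worthwhile clarification of the same route, and it matches the paper's own conclusion that the distinction between a basis and a strong basis is redundant.
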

In other words, if there exists a strong finite basis for a commutative
scalable monoid $Q$ over a field then every finite basis for $Q$
is a strong basis. Accordingly, the distinction between a basis and
a strong basis for a quantity space is redundant; it suffices to talk
about a basis. We also note that a strongly free quantity space is
just a free quantity space.
\begin{lem}
\label{s3.3}Let $Q$ be a quantity space over $K$ with a basis $\left\{ b_{1},\ldots b_{n}\right\} $,
and consider $x=\mu\cdot\prod_{i=1}^{n}b_{i}^{_{_{k_{i}}}}$ and $y=\nu\cdot\prod_{i=1}^{n}b_{i}^{_{_{\ell_{i}}}}$.
The following conditions are equivalent: 
\begin{enumerate}
\item $x\sim y$, or equivalently $\mu\cdot\prod_{i=1}^{n}b_{i}^{_{_{k_{i}}}}\sim\nu\cdot\prod_{i=1}^{n}b_{i}^{_{_{\ell_{i}}}}$\emph{;} 
\item $k^{i}=\ell{}^{i}$ for $i=1,\ldots,n$\emph{;} 
\item $\prod_{i=1}^{n}b_{i}^{_{_{k_{i}}}}=\prod_{i=1}^{n}b_{i}^{_{_{\ell_{i}}}}$\emph{;}
\item $\nu\cdot x=\mu\cdot y$, or equivalently $\nu\cdot\left(\mu\cdot\prod_{i=1}^{n}b_{i}^{_{_{k_{i}}}}\right)=\mu\cdot\left(\nu\cdot\prod_{i=1}^{n}b_{i}^{_{_{\ell_{i}}}}\right)$. 
\end{enumerate}
\end{lem}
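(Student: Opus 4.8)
The plan is to prove the four conditions equivalent by establishing the cycle $(1)\Rightarrow(2)\Rightarrow(3)\Rightarrow(4)\Rightarrow(1)$. The single tool that does almost all the work is the \emph{uniqueness} of the basis expansion guaranteed by Definition \ref{d2.3}: since $\left\{ b_{1},\ldots,b_{n}\right\}$ is a basis, each quantity determines its array $\left(\mu,k_{1},\ldots,k_{n}\right)$ uniquely, and this holds for \emph{every} quantity, zero quantities included.

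The step I expect to carry the weight is $(1)\Rightarrow(2)$. I would start from $x\sim y$, that is, $\alpha\cdot x=\beta\cdot y$ for some $\alpha,\beta\in K$ by Definition \ref{d3.1}. Using axiom (2) of Definition \ref{thm:def1} to rewrite $\alpha\cdot x=\alpha\cdot\left(\mu\cdot\prod_{i=1}^{n}b_{i}^{k_{i}}\right)=\alpha\mu\cdot\prod_{i=1}^{n}b_{i}^{k_{i}}$ and likewise $\beta\cdot y=\beta\nu\cdot\prod_{i=1}^{n}b_{i}^{\ell_{i}}$, the hypothesis becomes
\[
\alpha\mu\cdot\prod_{i=1}^{n}b_{i}^{k_{i}}=\beta\nu\cdot\prod_{i=1}^{n}b_{i}^{\ell_{i}}.
\]
These are two expansions of one and the same quantity, so uniqueness forces both $\alpha\mu=\beta\nu$ and $k_{i}=\ell_{i}$ for all $i$, which is (2).

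The remaining implications are routine. $(2)\Rightarrow(3)$ is immediate, since equal exponent arrays yield the same product. For $(3)\Rightarrow(4)$ I would set $z=\prod_{i=1}^{n}b_{i}^{k_{i}}=\prod_{i=1}^{n}b_{i}^{\ell_{i}}$, so that $x=\mu\cdot z$ and $y=\nu\cdot z$; then $\nu\cdot x=\nu\mu\cdot z$ and $\mu\cdot y=\mu\nu\cdot z$, and these agree because $K$, being a field, is commutative. Finally $(4)\Rightarrow(1)$ is immediate from Definition \ref{d3.1}, taking $\alpha=\nu$ and $\beta=\mu$.

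The only genuinely delicate point is the treatment of zero scalars in $(1)\Rightarrow(2)$: if $\alpha\mu=0$ one is comparing expansions of a zero quantity. But the uniqueness clause of Definition \ref{d2.3} applies verbatim there as well, its array being $\left(0,k_{1},\ldots,k_{n}\right)$, so no separate case analysis is needed; distinct exponent arrays genuinely give distinct zero quantities sitting in distinct dimensions. This is precisely why the formally defined notion of commensurability from Section \ref{subsec:13} stays consistent with the basis structure.
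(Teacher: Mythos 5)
Your proof is correct and follows essentially the same route as the paper's: the cycle $(1)\Rightarrow(2)\Rightarrow(3)\Rightarrow(4)\Rightarrow(1)$, with the only substantive step being $(1)\Rightarrow(2)$ via the uniqueness of the expansion $\alpha\mu\cdot\prod_{i=1}^{n}b_{i}^{k_{i}}=\beta\nu\cdot\prod_{i=1}^{n}b_{i}^{\ell_{i}}$. Your added remark that uniqueness applies verbatim to zero quantities (distinct exponent arrays giving distinct zero elements in distinct dimensions) is a point the paper leaves implicit, but it is the same argument, not a different one.
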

\begin{proof}
The implications $(2)\Longrightarrow(3)$ , $(3)\Longrightarrow(4)$
and $(4)\Longrightarrow(1)$ are trivial. To prove $(1)\Longrightarrow(2)$,
note that if $x\sim y$ so that $\alpha\cdot\left(\mu\cdot\prod_{i=1}^{n}b_{i}^{_{_{k_{i}}}}\right)=\beta\cdot\left(\nu\cdot\prod_{i=1}^{n}b_{i}^{_{_{\ell_{i}}}}\right)$
for some $\alpha,\beta\in K$ then 
\[
\alpha\mu\cdot\prod_{i=1}^{n}b_{i}^{_{_{k_{i}}}}=z=\beta\nu\cdot\prod_{i=1}^{n}b_{i}^{_{_{\ell_{i}}}}.
\]
As the expansion of $z$ is unique, $k^{i}=\ell{}^{i}$ for $i=1,\ldots,n$.
\end{proof}
Note that Lemma \ref{s3.3} implies Fourier's principle of dimensional
homogeneity \cite{FOUR}: if not $k^{i}=\ell{}^{i}$ for $i=1,\ldots,n$
then $x\nsim y$, so $x\neq y$.

\section{\label{s24}Systems of unit quantities in quantity spaces}
\begin{prop}
\label{p22}If $Q$ is a (finitely generated) quantity space then
every non-zero quantity $u\in Q$ is a unit quantity for $\left[u\right]$.
\end{prop}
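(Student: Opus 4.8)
The plan is to verify directly the two defining requirements for $u$ to be a unit quantity for its orbit class $[u]$: that $u$ is a \emph{generating element} for $[u]$ (every $x\in[u]$ is $\rho\cdot u$ for some scalar $\rho$), and that this scalar is \emph{unique} (if $\rho\cdot u=\rho'\cdot u$ then $\rho=\rho'$). Both will follow from the basis expansion together with the field hypothesis.

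First I would fix a basis $\left\{b_{1},\ldots,b_{n}\right\}$ for $Q$ and write $u=\mu\cdot\prod_{i=1}^{n}b_{i}^{k_{i}}$. Since $u$ is non-zero, Proposition \ref{p20} gives $\mu\neq0$, and because the scalars form a field, $\mu$ is invertible. This invertibility of the leading scalar is the single fact that drives the whole argument, and it is precisely where the hypothesis that $Q$ is a quantity space over a field (rather than a scalable monoid over an arbitrary ring) is essential.

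Next, to show $u$ generates $[u]$, I would take an arbitrary $x\in[u]$, so that $x\sim u$, and expand $x=\nu\cdot\prod_{i=1}^{n}b_{i}^{\ell_{i}}$. By Lemma \ref{s3.3}, the relation $x\sim u$ forces $\ell_{i}=k_{i}$ for every $i$, so in fact $x=\nu\cdot\prod_{i=1}^{n}b_{i}^{k_{i}}$. Setting $\rho=\nu\mu^{-1}\in K$ then yields $\rho\cdot u=\rho\mu\cdot\prod_{i=1}^{n}b_{i}^{k_{i}}=\nu\cdot\prod_{i=1}^{n}b_{i}^{k_{i}}=x$, using Lemma \ref{thm:lem1}. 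Hence every element of $[u]$ is a scalar multiple of $u$.

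Finally, for the uniqueness condition, I would suppose $\rho\cdot u=\rho'\cdot u$. Expanding both sides gives $\rho\mu\cdot\prod_{i=1}^{n}b_{i}^{k_{i}}=\rho'\mu\cdot\prod_{i=1}^{n}b_{i}^{k_{i}}$, and the uniqueness of the basis expansion forces $\rho\mu=\rho'\mu$; cancelling the invertible $\mu$ then gives $\rho=\rho'$. I expect no genuine obstacle: the substance is entirely supplied by Lemma \ref{s3.3} (commensurable quantities share their exponent pattern) and by the fact that a non-zero scalar in a field can be inverted and cancelled, so the proof is a short, direct verification rather than anything requiring a new idea.
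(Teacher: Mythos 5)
Your proof is correct and follows essentially the same route as the paper's: expand $u$ and an arbitrary commensurable $x$ relative to a basis, get $\mu\neq0$ from Proposition \ref{p20}, invoke Lemma \ref{s3.3} to align the two expansions, and use invertibility of $\mu$ in the field for both generation and uniqueness. The only cosmetic difference is that you use the equal-exponents condition (2) of Lemma \ref{s3.3} and then build $\rho=\nu\mu^{-1}$ explicitly, whereas the paper invokes the equivalent condition (4), namely $\nu\cdot u=\mu\cdot x$, and divides by $\mu$.
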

\begin{proof}
Set $u=\mu\cdot\prod_{i=1}^{n}b_{i}^{_{_{k_{i}}}}$ and $x=\nu\cdot\prod_{i=1}^{n}b_{i}^{_{_{\ell_{i}}}}$.
Then $\mu\neq0$ by Proposition \ref{p20}, and if $u\sim x$ then
$\nu\cdot u=\mu\cdot x$ by Lemma \ref{s3.3}, so $x=\mu^{-1}\mu\cdot x=\mu^{-1}\cdot\left(\mu\cdot x\right)=\mu^{-1}\cdot\left(\nu\cdot u\right)=\mu^{-1}\nu\cdot u$.

Also, if $\lambda\cdot u=\lambda'\cdot u$ then $\lambda\mu\cdot\prod_{i=1}^{n}b_{i}^{k_{i}}=z=\lambda'\mu\cdot\prod_{i=1}^{n}b_{i}^{k_{i}}$,
so $\lambda\mu=\lambda'\mu$ since the expansion of $z$ is unique,
so $\lambda=\lambda'$ since $\mu\neq0$. 
\end{proof}
\begin{cor}
If $Q$ is a (finitely generated) quantity space then a dense, sparse
set of non-zero elements of $Q$ is a system of unit quantities of
$Q$, and a sparse set of non-zero elements of $Q$ is a partial system
of unit quantities of $Q$.
\end{cor}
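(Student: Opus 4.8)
The plan is to reduce both assertions directly to Proposition~\ref{p22}, which already carries all the content: in a finitely generated quantity space every non-zero quantity $u$ is a unit quantity for its own dimension $[u]$. Once this is in hand, the corollary is simply a matter of matching the hypotheses against the defining clauses in Definition~\ref{d8}.

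First I would recall that, by Definition~\ref{d8}, a \emph{set of unit quantities} of $Q$ is a set each of whose members is a unit quantity for some dimension $\mathsf{C}\in Q/{\sim}$, that a \emph{system of unit quantities} for $Q$ is a dense, sparse such set, and that a \emph{partial system of unit quantities} is a sparse such set. So let $U$ be a dense, sparse set of non-zero elements of $Q$. Density and sparseness are assumed outright, so the only clause left to check is that $U$ is a set of unit quantities. But each $u\in U$ is non-zero, so Proposition~\ref{p22} gives that $u$ is a unit quantity for the dimension $[u]$; hence $U$ is a set of unit quantities, and being also dense and sparse it is a system of unit quantities for $Q$.

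For the second statement I would argue identically: if $U$ is merely a sparse set of non-zero elements, then by the same appeal to Proposition~\ref{p22} each $u\in U$ is a unit quantity for $[u]$, so $U$ is a sparse set of unit quantities, which by Definition~\ref{d8} is exactly a partial system of unit quantities for $Q$.

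I expect no real obstacle here. The mathematical work is entirely contained in Proposition~\ref{p22}; the corollary only observes that the adjectives ``dense'' and ``sparse'' in the hypotheses coincide with the two adjectives in the definition of a (partial) system of unit quantities, while the hypothesis ``non-zero'' is precisely what is needed to invoke Proposition~\ref{p22} for each element.
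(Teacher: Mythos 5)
Your proposal is correct and is exactly the argument the paper intends: the corollary is stated without proof precisely because it follows immediately by applying Proposition~\ref{p22} to each non-zero element and then matching the adjectives \emph{dense} and \emph{sparse} against the definitions of a system and a partial system of unit elements in Definition~\ref{d8}. Nothing is missing.
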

In metrology, unit quantities are called \emph{measurement units \cite{JCGM}}.
A \emph{set of base units} $B$ is a finite set of measurement units
each of which cannot be expressed as a product of powers of the other
measurement units in $B$ \cite{JCGM}. A finite basis $B$ for a
quantity space $Q$ is a set of base units; if $b\in B$ is not a
base unit relative to $B$ then $1\cdot b=b=\prod_{i=1}^{n}b_{i}^{_{_{k_{i}}}}=1\cdot\prod_{i=1}^{n}b_{i}^{_{_{k_{i}}}}$,
where $b_{i}\in B$ and $b_{i}\neq b$ for all $b_{i}$, so $b$ does
not have a unique expansion relative to $B$, so $B$ is not a basis
for $Q$. 
\begin{prop}
\label{p26-1}If $Q$ is a quantity space with basis $B=\left\{ b_{1},\ldots,b_{n}\right\} $
then 
\[
\mathcal{U}=\left\{ 1\cdot\prod_{i=1}^{n}b_{i}^{k_{i}}\mid k_{i}\in\mathbb{Z}\right\} 
\]
 is a coherent system of unit quantities for $Q$.
\end{prop}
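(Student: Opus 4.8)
The plan is to verify the conditions bundled into the definition of a coherent system of unit elements (Definitions \ref{d12} and \ref{d8}): namely that $\mathcal{U}$ is a submonoid of $Q$, that each of its elements is a unit element for some orbit class, and that $\mathcal{U}$ is both dense and sparse. Consistency and closedness then come for free, the former trivially and the latter from the submonoid property.

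First I would establish the submonoid property. Since every basis element of a quantity space is invertible, each product $\prod_{i=1}^{n}b_{i}^{k_{i}}$ is well defined for arbitrary integer exponents. The identity $\boldsymbol{1}=1\cdot\prod_{i=1}^{n}b_{i}^{0}$ lies in $\mathcal{U}$ by Proposition \ref{p19a}(1), and closure under multiplication is immediate from Proposition \ref{p19a}(2): the product of $1\cdot\prod b_{i}^{k_{i}}$ and $1\cdot\prod b_{i}^{\ell_{i}}$ is $1\cdot\prod b_{i}^{k_{i}+\ell_{i}}$, again a member of $\mathcal{U}$.

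Next I would check that each element of $\mathcal{U}$ is a unit element and that $\mathcal{U}$ is dense. Every $u=1\cdot\prod b_{i}^{k_{i}}$ is a non-zero quantity by Proposition \ref{p20}, since its scalar coefficient $1$ is non-zero in the field; Proposition \ref{p22} then guarantees that such a $u$ is a unit quantity for its orbit class $\left[u\right]$. For density, take an arbitrary $x\in Q$ and write $x=\mu\cdot\prod b_{i}^{k_{i}}$; setting $u=1\cdot\prod b_{i}^{k_{i}}\in\mathcal{U}$ gives $x=\mu\cdot u$, so $x\sim u$ by Corollary \ref{c31}, and hence $u\sim x$ by symmetry of $\sim$.

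The one step carrying genuine content is sparseness, which I would deduce from Lemma \ref{s3.3}. If $u=1\cdot\prod b_{i}^{k_{i}}$ and $v=1\cdot\prod b_{i}^{\ell_{i}}$ satisfy $u\sim v$, then the equivalence of conditions (1) and (2) in that lemma forces $k_{i}=\ell_{i}$ for all $i$, whence $u=v$ by uniqueness of expansions. This is where the definition of a basis really earns its keep---it is precisely the uniqueness of the expansion, the algebraic form of Fourier's dimensional homogeneity, that prevents two distinct monomials in the $b_{i}$ from being commensurable. With the submonoid, unit-element, density and sparseness properties in hand, $\mathcal{U}$ is by definition a coherent system of unit quantities for $Q$.
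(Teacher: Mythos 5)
Your proposal is correct and follows essentially the same route as the paper's own proof: establish the submonoid property via $\boldsymbol{1}=1\cdot\prod_{i=1}^{n}b_{i}^{0}$ and the product formula of Proposition \ref{p19a}, show each element is a non-zero quantity and hence a unit quantity by Proposition \ref{p22}, obtain density from the expansion $x=\mu\cdot\prod_{i=1}^{n}b_{i}^{k_{i}}$, and obtain sparseness from Lemma \ref{s3.3}. The only cosmetic difference is that you justify non-zeroness directly from the coefficient $1$ via Proposition \ref{p20}, whereas the paper routes it through Proposition \ref{p19} on basis elements; both are valid.
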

\begin{proof}
All elements of $B$ are non-zero by Proposition \ref{p19}, so all
elements of $\mathcal{U}$ are non-zero and hence unit quantities
by Proposition \ref{p22}. Also, $\mathcal{U}$ is dense in $Q$ since
every $x\in Q$ has an expansion $x=\mu\cdot\prod_{i=1}^{n}b_{i}^{_{_{k_{i}}}}$,
so $1\cdot x=\mu\cdot\left(1\cdot\prod_{i=1}^{n}b_{i}^{k_{i}}\right)$.
Lastly, if $u=1\cdot\prod_{i=1}^{n}b_{i}^{k_{i}}\sim1\cdot\prod_{i=1}^{n}b_{i}^{_{_{\ell_{i}}}}=v$
then $\prod_{i=1}^{n}b_{i}^{k_{i}}=\prod_{i=1}^{n}b_{i}^{_{_{\ell_{i}}}}$
by Lemma \ref{s3.3}, so $u=v$, meaning that $\mathcal{U}$ is sparse
in $Q$.

It remains to prove that $\mathcal{U}$ is a monoid. Clearly, $\mathbf{1}\in\mathcal{U}$
since $\boldsymbol{1}=1\cdot\prod_{i=1}^{n}b_{i}^{0}$, and we have
\[
\left(1\cdot\prod_{i=1}^{n}b_{i}^{_{_{k_{i}}}}\right)\left(1\cdot\prod_{i=1}^{n}b_{i}^{\ell_{i}}\right)=1\cdot\prod_{i=1}^{n}b_{i}^{\left(k_{i}+\ell_{i}\right)},
\]
so if $u,v\in\mathcal{U}$ then $uv\in\mathcal{U}$. Thus, $\mathcal{U}$
is a submonoid of $Q$.
\end{proof}
In other words, every (finite) basis $B$ can be extended to a coherent
system $\mathcal{U}$ of unit quantities, consisting of basis quantities
and other unit quantities that are expressed as products of basis
quantities and their inverses.

In metrology, a \emph{coherent system of units} $U$ is defined essentially
as a set of measurement units each of which is either a base unit
$b_{I}\in U$ or a \emph{coherent derived unit, }a non-base unit of
the form $1\cdot\prod_{i=1}^{n}b_{i}^{k_{i}}$, where each $b_{i}$
is a base unit in $U$ and $k_{1},\ldots,k_{n}$ are integers \cite{JCGM}.
By Proposition \ref{p26-1}, a coherent system of units in this sense
is a coherent system of unit quantities in the sense of Definition
\ref{d12}.

By Propositions \ref{p26-1} and \ref{p11}, every (finitely generated)
quantity space is an \linebreak{}
additive quantity space in the sense of Definition \ref{d9}.

Also, Propositions \ref{p26-1} and \ref{p15} imply that if $Q$
is a (finitely generated) quantity space over an ordered field $K$
then $Q$ is an ordered additive quantity space over $K$ with $\mathsf{C}$
ordered by $\leq_{\mathsf{C}}$ defined by $x\leq_{\mathsf{C}}y$
if and only if $y-x=\rho\cdot u$ for some $\rho\in K$ such that
$0\leq\rho$ and some $u\in\mathcal{U}\cap\mathsf{C}$, where $\mathcal{U}$
is a coherent system of unit quantities derived from a basis for $Q$.
Thus, every quantity space over $\mathbb{Q}$ or $\mathbb{R}$ can
be regarded as an ordered additive quantity space since $\mathbb{Q}$
and $\mathbb{R}$ are ordered. We normally want quantity spaces to
be ordered, and since any Dedekind-complete ordered field is isomorphic
to $\mathbb{R}$ \cite{BLY}, it is natural to let $K$ be the real
numbers $\mathbb{R}$.

\section{\label{s25-1}Measures of quantities}
\begin{defn}
\label{d2.5}Let $Q$ be a quantity space over $K$ with basis $B=\left\{ b_{1},\ldots,b_{n}\right\} $.
The uniquely determined scalar $\mu\in K$ in the expansion 
\[
x=\mu\cdot\prod_{i=1}^{n}b_{i}^{k_{i}}
\]
is called the \emph{measure} of $x$ relative to \textbf{$B$} and
will be denoted by $\mu_{B}\left(x\right)$. $\hphantom{\square}\square$ 
\end{defn}
For example, $\mathbf{1}=1\cdot\prod_{i=1}^{n}b_{i}^{0}$ for any
$B$, so we have the following simple but useful fact.
\begin{prop}
\label{p25}Let $Q$ be a (finitely generated) quantity space. For
any basis $B$ for $Q$ we have $\mu_{B}\left(\mathbf{1}\right)=1$.
\end{prop}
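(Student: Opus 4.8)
The plan is to invoke the uniqueness of the basis expansion together with the explicit expansion of the identity element already recorded in Proposition \ref{p19a}(1). By that result, $\boldsymbol{1}=1\cdot\prod_{i=1}^{n}b_{i}^{0}$, which exhibits $\boldsymbol{1}$ in the standard form $x=\mu\cdot\prod_{i=1}^{n}b_{i}^{k_{i}}$ with $\mu=1$ and all exponents $k_{i}=0$.

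Since $B$ is a basis for $Q$, Definition \ref{d2.3} guarantees that this expansion is the \emph{unique} expansion of $\boldsymbol{1}$ of that form. By Definition \ref{d2.5}, the measure $\mu_{B}\left(\boldsymbol{1}\right)$ is by definition the scalar appearing in that unique expansion, and hence $\mu_{B}\left(\boldsymbol{1}\right)=1$.

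There is essentially no obstacle here: once the identity is written via Proposition \ref{p19a}(1), the conclusion follows directly from the uniqueness built into the definition of a basis and the definition of the measure. The only point meriting a word of care is confirming that $\prod_{i=1}^{n}b_{i}^{0}=\boldsymbol{1}$, which holds by the convention $x^{0}=\boldsymbol{1}$ fixed immediately after Definition \ref{thm:def1}; this ensures the displayed product genuinely equals $\boldsymbol{1}$, so that the coefficient $1$ really is the measure $\mu_{B}\left(\boldsymbol{1}\right)$ rather than an incidental scalar.
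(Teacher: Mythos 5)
Your proof is correct and follows essentially the same route as the paper, which justifies the proposition by the remark that $\mathbf{1}=1\cdot\prod_{i=1}^{n}b_{i}^{0}$ for any basis $B$ (Proposition \ref{p19a}(1)) combined with Definition \ref{d2.5}. Your explicit appeal to the uniqueness of the expansion and to the convention $x^{0}=\boldsymbol{1}$ merely spells out what the paper leaves implicit.
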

Relative to a fixed basis, measures of quantities can be used as proxies
for the quantities themselves. 
\begin{prop}
\label{p26}If $Q$ is a (finitely generated) quantity space with
basis $B$ and $x,y\in Q$ then $\mu_{B}\left(xy\right)=\mu_{B}\left(x\right)\mu_{B}\left(y\right)$.
\end{prop}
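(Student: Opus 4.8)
The plan is to use the expansion of quantities relative to the basis $B$ and reduce the claim to Proposition~\ref{p19a}(2), which already describes how products of quantities behave coordinatewise. First I would write out the expansions $x=\mu_{B}\left(x\right)\cdot\prod_{i=1}^{n}b_{i}^{k_{i}}$ and $y=\mu_{B}\left(y\right)\cdot\prod_{i=1}^{n}b_{i}^{\ell_{i}}$, which are guaranteed to exist and be unique by the definition of a basis (Definition~\ref{d2.3}) together with the identification of the measure as the scalar coefficient in Definition~\ref{d2.5}.

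Next I would apply Proposition~\ref{p19a}(2) to obtain
\[
xy=\mu_{B}\left(x\right)\mu_{B}\left(y\right)\cdot\prod_{i=1}^{n}b_{i}^{\left(k_{i}+\ell_{i}\right)}.
\]
This is already an expansion of $xy$ in terms of the basis $B$, with scalar coefficient $\mu_{B}\left(x\right)\mu_{B}\left(y\right)$ and integer exponents $k_{i}+\ell_{i}$. Since the expansion of any quantity relative to a basis is unique, the scalar appearing in this expression must equal $\mu_{B}\left(xy\right)$ by the very definition of the measure. Hence $\mu_{B}\left(xy\right)=\mu_{B}\left(x\right)\mu_{B}\left(y\right)$, as required.

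There is essentially no obstacle here: the entire content has been front-loaded into Proposition~\ref{p19a}(2), which in turn rests on Lemma~\ref{thm:lem1} and commutativity. The only point requiring a moment's care is the appeal to uniqueness of the expansion — one must note that the coefficient read off from the product formula is the measure precisely because Definition~\ref{d2.5} defines $\mu_{B}\left(xy\right)$ as \emph{the} unique scalar in \emph{the} expansion of $xy$, so no separate computation of $\mu_{B}\left(xy\right)$ is needed. This makes the proposition a direct corollary, and the proof should be only one or two lines.
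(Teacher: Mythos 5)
Your proof is correct and follows essentially the same route as the paper, which simply cites Proposition~\ref{p19a}; you have just spelled out the details the paper leaves implicit, namely applying Proposition~\ref{p19a}(2) to the basis expansions and then invoking uniqueness of the expansion to identify the resulting coefficient with $\mu_{B}\left(xy\right)$.
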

\begin{proof}
This follows immediately from Proposition \ref{p19a}.
\end{proof}
\begin{prop}
\label{s2.6}Let $Q$ be a (finitely generated) quantity space with
basis $B$. A quantity $x\in Q$ is invertible if and only if $\mu_{B}\left(x\right)\neq0$,
and for any invertible $x\in Q$ we have $\mu_{B}\left(x^{-1}\right)=\mu_{B}\left(x\right)^{-1}$.
\end{prop}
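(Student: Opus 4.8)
The plan is to assemble the statement from results already in place, since almost all the work is done. For the biconditional I would chain two earlier equivalences. First I would invoke Proposition \ref{p22-1}, which says that $x \in Q$ is invertible if and only if $x$ is non-zero, and then Proposition \ref{p20}, which says that $x$ is non-zero if and only if the scalar $\mu$ in its expansion $x = \mu \cdot \prod_{i=1}^{n} b_i^{k_i}$ is non-zero. Since $\mu = \mu_B\left(x\right)$ by Definition \ref{d2.5}, composing these two equivalences yields immediately that $x$ is invertible if and only if $\mu_B\left(x\right) \neq 0$.

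For the formula $\mu_B\left(x^{-1}\right) = \mu_B\left(x\right)^{-1}$ the cleanest route uses multiplicativity of the measure rather than the explicit expansion. Assume $x$ is invertible. By the first part we then have $\mu_B\left(x\right) \neq 0$, so its inverse in the field $K$ exists. Applying $\mu_B$ to the identity $x\,x^{-1} = \mathbf{1}$ and using Proposition \ref{p26} (that $\mu_B$ is multiplicative) together with Proposition \ref{p25} (that $\mu_B\left(\mathbf{1}\right) = 1$) gives $\mu_B\left(x\right)\,\mu_B\left(x^{-1}\right) = 1$. Dividing by the non-zero scalar $\mu_B\left(x\right)$ then yields $\mu_B\left(x^{-1}\right) = \mu_B\left(x\right)^{-1}$.

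Alternatively, one can read the result off directly: Proposition \ref{p19a}(3) supplies, for $\mu = \mu_B\left(x\right) \neq 0$, the explicit inverse $x^{-1} = \tfrac{1}{\mu} \cdot \prod_{i=1}^{n} b_i^{-k_i}$, whose expansion exhibits $\tfrac{1}{\mu}$ as its measure, so that $\mu_B\left(x^{-1}\right) = 1/\mu = \mu_B\left(x\right)^{-1}$ by uniqueness of the expansion. I expect no genuine obstacle here; the only point requiring care is that the division step is legitimate, which is precisely what the first half of the proposition guarantees, and which is available because $K$ is a field so that every non-zero scalar is invertible.
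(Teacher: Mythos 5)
Your proposal is correct, and your first half is exactly the paper's: chaining Proposition \ref{p22-1} (invertible iff non-zero) with Proposition \ref{p20} (non-zero iff $\mu\neq0$ in the expansion). For the identity $\mu_{B}\left(x^{-1}\right)=\mu_{B}\left(x\right)^{-1}$, however, your primary argument differs from the paper's. The paper reads the result off the explicit inverse formula of Proposition \ref{p19a}(3), i.e.\ $x^{-1}=\frac{1}{\mu}\cdot\prod_{i=1}^{n}b_{i}^{-k_{i}}$, together with Propositions \ref{p20} and \ref{p22-1}; this is precisely the ``alternative'' route you sketch at the end. Your main route instead treats $\mu_{B}$ as a multiplicative map: applying Proposition \ref{p26} and Proposition \ref{p25} to $xx^{-1}=\mathbf{1}$ gives $\mu_{B}\left(x\right)\mu_{B}\left(x^{-1}\right)=1$, and the first half licenses division in the field $K$. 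The trade-off is mild but real: the paper's computation is self-contained at the level of expansions and actually exhibits the inverse, while your homomorphism-style argument is more structural \textendash{} it uses only that $\mu_{B}$ is multiplicative and normalized, so it would survive in any setting where such a measure map exists, without ever expanding $x^{-1}$ in the basis. Both are complete proofs; you correctly flag that the only delicate point is the legitimacy of dividing by $\mu_{B}\left(x\right)$, which the first half supplies.
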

\begin{proof}
The first part of the assertion follows from Propositions \ref{p20}
and \ref{p22-1}; the second part follows Propositions \ref{p19a},
\ref{p20} and \ref{p22-1}.
\end{proof}
\begin{prop}
\label{p28}If $Q$ is a (finitely generated) quantity space with
basis $B$ then $\mu_{B}\left(\lambda\cdot x\right)=\lambda\,\mu_{B}\left(x\right)$
for all $\lambda\in K$ and $x\in Q$.
\end{prop}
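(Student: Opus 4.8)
The plan is to prove the identity by writing $x$ in its unique basis expansion, applying the scaling-action axiom, and then reading off the measure of $\lambda\cdot x$ directly from the resulting expansion. This is a short computation with essentially no obstacle; the only thing to be careful about is invoking the uniqueness of expansions to justify that the scalar we obtain really is the measure.

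Concretely, let $B=\{b_{1},\ldots,b_{n}\}$ and fix $x\in Q$. By the definition of the measure (Definition \ref{d2.5}), $x$ has the unique expansion
\[
x=\mu_{B}\left(x\right)\cdot\prod_{i=1}^{n}b_{i}^{k_{i}}
\]
for uniquely determined integers $k_{1},\ldots,k_{n}$. First I would apply the scaling action to this expression and use part (2) of Definition \ref{thm:def1}, namely $\alpha\cdot\left(\beta\cdot y\right)=\alpha\beta\cdot y$, to obtain
\[
\lambda\cdot x=\lambda\cdot\left(\mu_{B}\left(x\right)\cdot\prod_{i=1}^{n}b_{i}^{k_{i}}\right)=\left(\lambda\,\mu_{B}\left(x\right)\right)\cdot\prod_{i=1}^{n}b_{i}^{k_{i}}.
\]

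This is an expansion of $\lambda\cdot x$ of the required form, with the same exponents $k_{i}$ and with scalar $\lambda\,\mu_{B}\left(x\right)\in K$. Since the expansion of any quantity relative to $B$ is unique, this scalar must be the measure of $\lambda\cdot x$, that is, $\mu_{B}\left(\lambda\cdot x\right)=\lambda\,\mu_{B}\left(x\right)$, as claimed. The uniqueness of the expansion, guaranteed because $B$ is a basis, is the step that does the real work, even though the calculation itself is immediate.
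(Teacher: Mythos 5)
Your proposal is correct and follows essentially the same route as the paper's own proof: expand $x$ relative to $B$, apply axiom (2) of the scaling action to get $\lambda\cdot x=\lambda\,\mu_{B}\left(x\right)\cdot\prod_{i=1}^{n}b_{i}^{k_{i}}$, and read off the measure. Your explicit appeal to uniqueness of the expansion is merely making overt what the paper leaves implicit in Definition \ref{d2.5}.
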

\begin{proof}
If $x=\mu\cdot\prod_{i=1}^{n}b_{i}^{_{_{k_{i}}}}$ then $\lambda\cdot x=\lambda\cdot\left(\mu\cdot\prod_{i=1}^{n}b_{i}^{_{_{k_{i}}}}\right)=\lambda\mu\cdot\prod_{i=1}^{n}b_{i}^{_{_{k_{i}}}}$,
so $\mu_{B}\left(\lambda\cdot x\right)=\lambda\mu=\lambda\,\mu_{B}\left(x\right)$.
\end{proof}
\begin{prop}
\label{s3.5}If $Q$ is a (finitely generated) quantity space with
basis $B$ then $\mu_{B}\left(x\right)+\mu_{B}\left(y\right)=\mu_{B}\left(x+y\right)$
for all $x,y\in X$ such that $x\sim y$.
\end{prop}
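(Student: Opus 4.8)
The plan is to reduce both quantities to a common unit quantity and then read off the measure from the uniqueness of basis expansions. First I would invoke Lemma \ref{s3.3}: since $x\sim y$, the expansions $x=\mu_{B}(x)\cdot\prod_{i=1}^{n}b_{i}^{k_{i}}$ and $y=\mu_{B}(y)\cdot\prod_{i=1}^{n}b_{i}^{\ell_{i}}$ must share the same exponent pattern, i.e.\ $k_{i}=\ell_{i}$ for all $i$. Writing $w=\prod_{i=1}^{n}b_{i}^{k_{i}}$, this lets me express both quantities over the same factor as $x=\mu_{B}(x)\cdot w$ and $y=\mu_{B}(y)\cdot w$.

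Next I would identify $w$ as a genuine unit quantity for the orbit class $\mathsf{C}=[x]=[y]$. Indeed $w=1\cdot\prod_{i=1}^{n}b_{i}^{k_{i}}$ lies in the coherent system of unit quantities $\mathcal{U}$ furnished by Proposition \ref{p26-1}, and $w$ is non-zero, so it is a unit quantity for its orbit class by Proposition \ref{p22}. Because $Q$ is an additive quantity space, addition on $\mathsf{C}$ is exactly the operation of Definition \ref{d3.3} computed with respect to any unit element of $\mathsf{C}$; taking that unit element to be $w$ yields $x+y=\left(\mu_{B}(x)+\mu_{B}(y)\right)\cdot w$.

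Finally I would read off the measure. The last equation displays $x+y$ as $\left(\mu_{B}(x)+\mu_{B}(y)\right)\cdot\prod_{i=1}^{n}b_{i}^{k_{i}}$, which is an expansion of $x+y$ in the basis $B$; by the uniqueness of such expansions (Definition \ref{d2.5}) the scalar coefficient is precisely $\mu_{B}(x+y)$, whence $\mu_{B}(x+y)=\mu_{B}(x)+\mu_{B}(y)$.

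I do not expect a genuine obstacle; the only point needing care is that the sum $x+y$ appearing in the statement must coincide with the operation in Definition \ref{d3.3} and must not depend on the chosen unit element. Both facts are already secured: the independence from the choice of unit element by the well-definedness proposition preceding Definition \ref{d3.3}, and the coincidence of the two additions by the fact (Propositions \ref{p26-1} and \ref{p11}) that $Q$ is an additive quantity space whose orbit-class addition is that of Definition \ref{d3.3}. Alternatively, once $x=\mu_{B}(x)\cdot w$ and $y=\mu_{B}(y)\cdot w$ are in hand, one could combine Proposition \ref{p28} with the module identity $\lambda\cdot w+\kappa\cdot w=(\lambda+\kappa)\cdot w$ to reach the same conclusion.
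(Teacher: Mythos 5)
Your proof is correct and follows essentially the same route as the paper's: express $x$ and $y$ over the common factor $\prod_{i=1}^{n}b_{i}^{k_{i}}$ (justified by Lemma \ref{s3.3}), note that this factor is a non-zero quantity and hence a unit quantity for its dimension by Proposition \ref{p22}, apply Definition \ref{d3.3} to get $x+y=\left(\mu_{B}\left(x\right)+\mu_{B}\left(y\right)\right)\cdot\prod_{i=1}^{n}b_{i}^{k_{i}}$, and read off the measure from uniqueness of the expansion. The only differences are cosmetic: you make explicit some steps the paper leaves implicit (the appeal to Lemma \ref{s3.3} and to well-definedness of the sum), and your detour through Proposition \ref{p26-1} is unnecessary since Proposition \ref{p22} already suffices.
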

\begin{proof}
Let $x=\mu_{B}\left(x\right)\cdot\prod_{i=1}^{n}b_{i}^{k_{i}}$ and
$y=\mu_{B}\left(y\right)\cdot\prod_{i=1}^{n}b_{i}^{k_{i}}$ be the
expansions of $x$ and $y$ relative to $B=\left\{ b_{1},\ldots.b_{n}\right\} $.
As $\prod_{i=1}^{n}b_{i}^{k_{i}}$ is non-zero, and thus a unit quantity
for $\left[\prod_{i=1}^{n}b_{i}^{k_{i}}\right]$ by Proposition \ref{p22},
we have 
\[
x+y=\mu_{B}\left(x\right)\cdot\prod_{i=1}^{n}b_{i}^{k_{i}}+\mu_{B}\left(y\right)\cdot\prod_{i=1}^{n}b_{i}^{k_{i}}=\left(\mu_{B}\left(x\right)+\mu_{B}\left(y\right)\right)\cdot\prod_{i=1}^{n}b_{i}^{k_{i}},
\]
proving the assertion. 
\end{proof}
In general, the measure of a quantity depends on a choice of basis,
but there is an important exception to this rule.
\begin{prop}
\label{s3.4}Let $Q$ be a (finitely generated) quantity space over
$K$. For every $x\in\left[\mathbf{1}\right]$, the measure $\mu_{B}\left(x\right)$
of $x$ relative to a basis $B$ for $Q$ does not depend on $B$.
\end{prop}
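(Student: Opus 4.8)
The plan is to show that the orbit class $[\mathbf{1}]$ consists precisely of the scalar multiples of $\mathbf{1}$, after which the measure of such an element relative to any basis turns out to be exactly that scalar, a quantity that carries no reference to a basis. To pin down $[\mathbf{1}]$ intrinsically, I would fix any basis $B=\{b_1,\ldots,b_n\}$ for $Q$ and recall from Proposition \ref{p19a} that $\mathbf{1}=1\cdot\prod_{i=1}^n b_i^0$, so that the exponents of $\mathbf{1}$ relative to $B$ all vanish. Applying Lemma \ref{s3.3} with the second element taken to be $\mathbf{1}$, the condition $x\sim\mathbf{1}$ becomes equivalent to the exponents of $x$ relative to $B$ being zero; hence every $x\in[\mathbf{1}]$ has the expansion $x=\mu_B(x)\cdot\mathbf{1}$ and is, in particular, a scalar multiple of $\mathbf{1}$.

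Next I would identify this scalar with the measure and show it is basis-free. Since $\mathbf{1}$ is non-zero, it is a unit quantity for its own orbit class $[\mathbf{1}]$ by Proposition \ref{p22}, so for $x\in[\mathbf{1}]$ there is a \emph{unique} $c\in K$ with $x=c\cdot\mathbf{1}$; this $c$ depends on $x$ alone and not on any basis. For an arbitrary basis $B$ I would then compute $\mu_B(x)=\mu_B(c\cdot\mathbf{1})=c\,\mu_B(\mathbf{1})=c$, using Proposition \ref{p28} for the middle equality and Proposition \ref{p25}, namely $\mu_B(\mathbf{1})=1$, for the last. As the right-hand side is the basis-independent scalar $c$, the measure $\mu_B(x)$ takes the same value for every basis $B$, which is the assertion.

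The argument is short and I do not expect a genuine obstacle; the one subtlety worth flagging is that Lemma \ref{s3.3} is invoked relative to each basis separately, so a priori the vanishing-exponent characterization is a per-basis statement. What makes the conclusion basis-independent is that the representation $x=c\cdot\mathbf{1}$ is itself basis-free and that $c$ is uniquely determined by the unit quantity $\mathbf{1}$. Equivalently, from $\mu_B(x)\cdot\mathbf{1}=x=\mu_{B'}(x)\cdot\mathbf{1}$ one reads off $\mu_B(x)=\mu_{B'}(x)$ directly from the defining uniqueness property of a unit quantity.
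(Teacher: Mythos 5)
Your proposal is correct and follows essentially the same route as the paper: invoke Proposition \ref{p22} to write $x=c\cdot\mathbf{1}$ with $c$ unique, then compute $\mu_{B}\left(x\right)=c\,\mu_{B}\left(\mathbf{1}\right)=c$ via Propositions \ref{p28} and \ref{p25}. Your opening paragraph using Lemma \ref{s3.3} to characterize $\left[\mathbf{1}\right]$ is harmless but redundant, since the unit-quantity property from Proposition \ref{p22} already yields the representation $x=c\cdot\mathbf{1}$.
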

\begin{proof}
$\mathbf{1}$ is a unit quantity for $\left[\mathbf{1}\right]$ by
Proposition \ref{p22}, so there is a unique $\lambda\in K$ such
that $x=\lambda\cdot\mathbf{1}$, so $\mu_{B}\left(x\right)=\lambda\,\mu_{B}\left(\mathbf{1}\right)$
by Proposition \ref{p28}, and $\mu_{B}\left(\mathbf{1}\right)$ does
not depend on $B$ by Proposition \ref{p25}. 
\end{proof}
It is common to refer to a quantity $x\in\left[\mathbf{1}\right]$
as a ``dimensionless quantity'', although $x$ is not really ``dimensionless''
\textendash{} it belongs to, or ``has'', the dimension $\left[\mathbf{1}\right]$.
The so-called Buckingham $\pi$ theorem in dimensional analysis depends
on the fact stated in Proposition \ref{s3.4}.

\section{\label{s26}Groups of dimensions; cardinality of bases}

Recall that a trivially scalable monoid $Q/{\sim}$ may also be regarded
as a plain monoid. The definition of a basis for a commutative monoid
differs slightly from that for a scalable commutative monoid.
\begin{defn}
\label{d17}Let $M$ be a commutative monoid. A \emph{finite} \emph{basis}
for $M$ is a set $B=\left\{ b_{1},\ldots,b_{n}\right\} $ of elements
of $M$ such that every $x\in M$ has a unique expansion 
\[
x=\prod_{i=1}^{n}b_{i}^{_{_{k_{i}}}},
\]
where $k_{1},\dots,k_{n}$ are integers. A \emph{strong} finite basis
for $M$ is a finite basis for $M$ such that every basis element
is invertible.
\end{defn}
In this section, every quotient of the form $Q/{\sim}$, where $Q$
is a quantity space, will be regarded as a monoid, which means that
Definition \ref{d17} will be used instead of Definition \ref{d2.3}
in these cases.
\begin{prop}
If $Q$ is a (finitely generated) quantity space then $Q/{\sim}$
is an abelian group.
\end{prop}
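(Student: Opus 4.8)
The plan is to observe that $Q/{\sim}$ is already known to be a monoid by Proposition \ref{p5-1}, and that it is commutative because $Q$ is (indeed $[x][y]=[xy]=[yx]=[y][x]$); so it remains only to show that every element of $Q/{\sim}$ has an inverse, since a commutative monoid in which every element is invertible is precisely an abelian group. I would prove invertibility by exhibiting an explicit inverse, reading it off from the basis expansion.

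First I would fix a strong basis $B=\{b_1,\ldots,b_n\}$ for $Q$, which exists by Definition \ref{d2.4}. The crucial feature of a strong basis is that each $b_i$ is invertible, so that $b_i^{-k_i}$ is a well-defined element of $Q$ for every integer $k_i$, and hence $w=\prod_{i=1}^n b_i^{-k_i}$ lies in $Q$. Given an arbitrary class $[x]\in Q/{\sim}$, I would take the unique expansion $x=\mu\cdot\prod_{i=1}^n b_i^{k_i}$ and propose $[w]$ as the inverse of $[x]$.

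The verification is then immediate from the multiplication rule of Proposition \ref{p19a}(2): $xw=\mu\cdot\prod_{i=1}^n b_i^{\,k_i-k_i}=\mu\cdot\mathbf{1}$. By Corollary \ref{c31} we have $\mu\cdot\mathbf{1}\sim\mathbf{1}$, so $[x][w]=[xw]=[\mu\cdot\mathbf{1}]=[\mathbf{1}]=1_{Q/{\sim}}$; commutativity then gives $[w][x]=1_{Q/{\sim}}$ as well. Since $[x]$ was arbitrary, $Q/{\sim}$ is an abelian group.

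I do not anticipate a genuine obstacle, but the one place where the hypotheses are essential is the claim that $w\in Q$: this uses the strong basis property, namely invertibility of each $b_i$, without which negative powers of basis elements would be meaningless. It is also worth flagging that the step $[\mu\cdot\mathbf{1}]=[\mathbf{1}]$ does not require $\mu\neq0$, since Corollary \ref{c31} holds for every scalar; this is what lets a single argument cover both the zero and the non-zero classes uniformly. Alternatively, one could route the proof through Lemma \ref{s3.3}, which shows that $[x]$ depends only on the exponent vector $(k_1,\ldots,k_n)$ and thereby identifies $Q/{\sim}$ with $\mathbb{Z}^n$ under addition, but the direct construction above is shorter.
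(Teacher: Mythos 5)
Your proof is correct and follows essentially the same route as the paper's: the paper also reduces the claim to exhibiting inverses, defines $y=1\cdot\prod_{i=1}^{n}b_{i}^{-k_{i}}$ from the basis expansion of $x$, and concludes via $[xy]=[\mu\cdot\mathbf{1}]=[\mathbf{1}]$. Your added remarks (invertibility of the $b_{i}$ justifying negative powers, and the argument covering $\mu=0$ uniformly) are accurate observations that the paper leaves implicit.
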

\begin{proof}
$Q/{\sim}$ is commutative since $\left[x\right]\left[y\right]=\left[xy\right]=\left[yx\right]=\left[y\right]\left[x\right]$
for all $\left[x\right],\left[y\right]\in Q/{\sim}$. To prove that
$Q/{\sim}$ is a group it suffices to show that for every $\left[x\right]\in X/{\sim}$
there is a dimension $\left[x\right]^{-1}\in X/{\sim}$ such that
$\left[x\right]\left[x\right]^{-1}=\left[x\right]^{-1}\left[x\right]=\left[\mathbf{1}\right]$.
Let $B=\left\{ b_{1},\ldots,b_{n}\right\} $ be a basis for $Q$ and
let $x=\mu\cdot\prod_{i=1}^{n}b_{i}^{k_{i}}$ be the unique expansion
of $x$ relative to $B$. Also set $y=1\cdot\prod_{i=1}^{n}b_{i}^{-k_{i}}$.
Then $\left[x\right]\left[y\right]=\left[xy\right]=\left[\mu\cdot\boldsymbol{1}\right]=\left[yx\right]=\left[y\right]\left[x\right]$,
so $\left[y\right]$ is an inverse $\left[x\right]^{-1}$ of $\left[x\right]$
since $\left[\mu\cdot\boldsymbol{1}\right]=\left[\boldsymbol{1}\right]$. 
\end{proof}
By a finite basis for an abelian group $G$ we mean a (strong) finite
basis for $G$ as a commutative monoid, and a finitely generated free
abelian group is an abelian group for which such a finite basis exists.
\begin{prop}
\label{s3.7-1}If $Q$ is a quantity space over $K$ with basis $B=\left\{ b_{1},\ldots,b_{n}\right\} $,
then $\mathsf{B}=\left\{ \left[b_{1}\right],\ldots,\left[b_{n}\right]\right\} $
is a basis for $Q/{\sim}$ with the same cardinality as $B$. 
\end{prop}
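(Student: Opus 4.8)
The plan is to verify the two clauses of Definition \ref{d17} for $\mathsf{B}$ as a subset of the commutative monoid $Q/{\sim}$, and then separately to check that the map $b_{i}\mapsto\left[b_{i}\right]$ is injective, which is what the cardinality claim amounts to. Throughout, the key device is the homomorphism $\phi:x\mapsto\left[x\right]$ from Proposition \ref{s3.3-1}, together with the observation (Proposition \ref{p5-1}) that $Q/{\sim}$ carries only the trivial scaling action, so that scalar coefficients are washed out in the quotient.

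For existence of expansions, I would take an arbitrary $\left[x\right]\in Q/{\sim}$ and use that $B$ is a basis for $Q$ to write $x=\mu\cdot\prod_{i=1}^{n}b_{i}^{k_{i}}$ with $\mu\in K$ and $k_{i}\in\mathbb{Z}$. Applying $\phi$, which preserves both products and the scaling action, gives $\left[x\right]=\mu\cdot\prod_{i=1}^{n}\left[b_{i}\right]^{k_{i}}$; but since $Q/{\sim}$ is trivially scalable the factor $\mu$ acts as the identity, so $\left[x\right]=\prod_{i=1}^{n}\left[b_{i}\right]^{k_{i}}$, which is precisely an expansion of the form required by Definition \ref{d17}.

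For uniqueness, suppose $\prod_{i=1}^{n}\left[b_{i}\right]^{k_{i}}=\prod_{i=1}^{n}\left[b_{i}\right]^{\ell_{i}}$. Since $\phi$ is a monoid homomorphism this reads $\bigl[\prod b_{i}^{k_{i}}\bigr]=\bigl[\prod b_{i}^{\ell_{i}}\bigr]$, that is, $\prod b_{i}^{k_{i}}\sim\prod b_{i}^{\ell_{i}}$. Lemma \ref{s3.3}, applied with both scalar coefficients equal to $1$, then forces $k_{i}=\ell_{i}$ for every $i$, so the expansion is unique and $\mathsf{B}$ is a basis for $Q/{\sim}$. The injectivity needed for the cardinality statement is obtained the same way: if $\left[b_{i}\right]=\left[b_{j}\right]$ then $b_{i}\sim b_{j}$, and writing $b_{i}=1\cdot\prod_{k}b_{k}^{\delta_{ik}}$ and $b_{j}=1\cdot\prod_{k}b_{k}^{\delta_{jk}}$ (unique expansions with exponent arrays the $i$th and $j$th standard vectors), Lemma \ref{s3.3} forces these arrays to coincide, hence $i=j$; thus $\mathsf{B}$ has exactly $n$ elements.

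None of the steps is genuinely difficult once the right prior results are in hand; the only point that needs care is the passage across $\phi$, where one must recognize that the scalar $\mu$ disappears precisely because the quotient is trivially scalable, and the repeated use of Lemma \ref{s3.3} to convert the congruence $\sim$ into equality of exponent arrays. That conversion is what does all the real work, both for uniqueness and for the distinctness of the $\left[b_{i}\right]$.
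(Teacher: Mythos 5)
Your proof is correct and follows essentially the same route as the paper's: both establish existence of expansions by pushing $x=\mu\cdot\prod_{i=1}^{n}b_{i}^{k_{i}}$ through the quotient (where the scalar $\mu$ is absorbed, per Corollary \ref{c31}), and both reduce uniqueness of exponents and injectivity of $b_{i}\mapsto\left[b_{i}\right]$ to Lemma \ref{s3.3}. The only cosmetic differences are that you invoke condition (2) of that lemma for injectivity where the paper uses condition (4), and you phrase the disappearance of $\mu$ via the trivially scalable structure of $Q/{\sim}$ rather than directly via $\left[\mu\cdot y\right]=\left[y\right]$.
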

\begin{proof}
The unique expansions of $b_{i},b_{i'}\in B$ relative to $B$ are
$b_{i}=1\cdot b_{i}$ and $b_{i'}=1\cdot b_{i'}$. Hence, $\left[b_{i}\right]=\left[b_{i'}\right]$
implies $b_{i}=b_{i'}$ since $b_{i}\sim b_{i'}$ implies $1\cdot b_{i}=1\cdot b_{i'}$
by Lemma \ref{s3.3}, so the surjective mapping $\phi:B\rightarrow\mathsf{B}$
given by $\phi\left(b_{i}\right)=\left[b_{i}\right]$ is injective
as well and hence a bijection.

Now, let $\left[x\right]$ be an arbitrary dimension in $Q/{\sim}$.
As $B$ is a basis for $Q$, we have $x=\mu\cdot\prod_{i=1}^{n}b_{i}^{k_{i}}$
for some $\mu\in K$ and some integers $k_{1},\ldots,k_{n}$, so $\left[x\right]=\left[\mu\cdot\prod_{i=1}^{n}b_{i}^{k_{i}}\right]=\left[\prod_{i=1}^{n}b_{i}^{k_{i}}\right]=\prod_{i=1}^{n}\left[b_{i}\right]^{k_{i}}$.
Also, if $\left[x\right]=\prod_{i=1}^{n}\left[b_{i}\right]^{k_{i}}=\prod_{i=1}^{n}\left[b_{i}\right]^{\ell{}_{i}}$,
then $\left[\prod_{i=1}^{n}b_{i}^{k_{i}}\right]=\left[\prod_{i=1}^{n}b_{i}^{\ell{}_{i}}\right]$,
so $1\cdot\prod_{i=1}^{n}b_{i}^{k_{i}}\sim1\cdot\prod_{i=1}^{n}b_{i}^{\ell{}_{i}}$
so $k_{i}=\ell_{i}$ for $i=1,\ldots,n$ by Lemma \ref{s3.3}. Hence,
$\mathsf{B}$ is a basis for $Q/{\sim}$.
\end{proof}
We say that a basis $\left\{ b_{1},\ldots,b_{n}\right\} $ for $Q$
and a basis $\left\{ \mathsf{b}_{1},\ldots,\mathsf{b}_{m}\right\} $
for $Q/{\sim}$ are \emph{similar} when $m=n$ and $\left[b_{i}\right]=\mathsf{b}_{i}$
for $i=1,\ldots,n$.
\begin{cor}
\label{c6}Let $Q$ be a (finitely generated) quantity space. For
every basis for $Q$ there exists a unique similar basis for $Q/{\sim}$. 
\end{cor}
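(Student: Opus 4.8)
The plan is to read off both the existence and the uniqueness assertions almost directly from Proposition \ref{s3.7-1} together with the definition of similar bases, so that essentially no new computation is required; the corollary is really a repackaging of that proposition in the language of similarity.

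For existence, I would begin with an arbitrary basis $B = \{b_1, \ldots, b_n\}$ for $Q$ equipped with a fixed indexing. Proposition \ref{s3.7-1} already hands us the candidate: it establishes that $\mathsf{B} = \{[b_1], \ldots, [b_n]\}$ is a basis for $Q/{\sim}$ and that the map $\phi : b_i \mapsto [b_i]$ is a bijection, whence $\mathsf{B}$ has exactly $n$ elements. Setting $\mathsf{b}_i = [b_i]$ then yields $m = n$ and $[b_i] = \mathsf{b}_i$ for each $i$, which is precisely the condition defining similarity of $B$ and $\mathsf{B}$. Hence a similar basis for $Q/{\sim}$ exists.

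For uniqueness, I would observe that the similarity condition determines the basis completely. Suppose $\{\mathsf{b}_1, \ldots, \mathsf{b}_m\}$ is any basis for $Q/{\sim}$ that is similar to $B$. By the definition of similarity we have $m = n$ and $\mathsf{b}_i = [b_i]$ for $i = 1, \ldots, n$; therefore, as a set, $\{\mathsf{b}_1, \ldots, \mathsf{b}_m\} = \{[b_1], \ldots, [b_n]\} = \mathsf{B}$. Thus every basis for $Q/{\sim}$ similar to $B$ coincides with the one produced in the existence step, and the similar basis is unique.

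The only delicate point is the bookkeeping between sets and indexed families, since similarity is phrased in terms of an indexing. I would make explicit that the bijectivity of $\phi$ from Proposition \ref{s3.7-1} forces $[b_1], \ldots, [b_n]$ to be pairwise distinct, so the $n$-element indexed family $(\mathsf{b}_i)$ and the $n$-element set $\mathsf{B}$ carry exactly the same information. Beyond checking this correspondence, there is no substantive obstacle: the result follows immediately from Proposition \ref{s3.7-1} and the definition of similar bases.
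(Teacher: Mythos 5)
Your proposal is correct and matches the paper's approach: the paper states Corollary \ref{c6} without proof, treating it as an immediate consequence of Proposition \ref{s3.7-1} and the definition of similar bases, which is exactly what you spell out. Your extra remark that bijectivity of $\phi$ makes the classes $\left[b_{1}\right],\ldots,\left[b_{n}\right]$ pairwise distinct, so the indexed family and the set carry the same information, is a reasonable tidying of the bookkeeping the paper leaves implicit.
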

Hence, corresponding to the fact that if $X$ is a scalable monoid
then $X/{\sim}$ is a monoid, we have the following much stronger
result.
\begin{prop}
If $Q$ is a (finitely generated) quantity space then $Q/{\sim}$
is a (finitely generated) free abelian group. 
\end{prop}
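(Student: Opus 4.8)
The plan is to assemble this statement almost entirely from results already in hand, since the substantive work has been carried out in Proposition \ref{s3.7-1} together with the preceding (unlabelled) proposition that $Q/{\sim}$ is an abelian group. First I would invoke Definition \ref{d2.4}: because $Q$ is a finitely generated quantity space, there exists a strong finite basis $B=\left\{b_{1},\ldots,b_{n}\right\}$ for $Q$. This is the only place where the hypothesis enters in an essential way, and it is what will deliver the ``finitely generated'' part of the conclusion.

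Next I would apply Proposition \ref{s3.7-1} to this $B$, which tells me that $\mathsf{B}=\left\{\left[b_{1}\right],\ldots,\left[b_{n}\right]\right\}$ is a finite basis for $Q/{\sim}$ in the sense of Definition \ref{d17}, that is, for $Q/{\sim}$ regarded as a commutative monoid, and moreover $\left|\mathsf{B}\right|=\left|B\right|=n$. Combined with the already-established fact that $Q/{\sim}$ is an abelian group, this exhibits $Q/{\sim}$ as an abelian group possessing a finite basis. (One could equivalently cite Corollary \ref{c6} to produce the similar basis $\mathsf{B}$ directly.)

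The one point that requires a remark --- and it is the closest thing to an obstacle --- is that the definition of a finitely generated free abelian group demands a \emph{strong} finite basis, one whose every element is invertible. I would observe that this is automatic here: since $Q/{\sim}$ is a group, every element, in particular each $\left[b_{i}\right]$, is invertible, so the basis $\mathsf{B}$ is strong. Hence $Q/{\sim}$ is an abelian group equipped with a strong finite basis, which is precisely the definition of a finitely generated free abelian group, completing the proof. No genuinely new computation is needed; the result is a corollary of the basis correspondence in Proposition \ref{s3.7-1} and the group structure established earlier.
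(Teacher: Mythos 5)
Your proof is correct and takes essentially the same route as the paper, which states this proposition without a separate proof precisely because it is the immediate combination of the earlier unlabelled proposition that $Q/{\sim}$ is an abelian group with the basis correspondence of Proposition \ref{s3.7-1} (equivalently Corollary \ref{c6}). Your closing remark about strongness being automatic in a group is the same point the paper absorbs into its definition of a finite basis for an abelian group via the parenthetical ``(strong)''.
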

The idea that the set of dimensions of a quantity space forms a free
abelian group is present in articles by Krystek \cite{KRYS} and Raposo
\cite{RAP}. This is actually an assumption built into the definition
of quantity spaces in \cite{RAP}; here it is a fact derived from
the definitions of quantity spaces and commensurability relations
on quantity spaces. 

A finitely generated abelian group may have no finite basis; in this
case, a \linebreak{}
corresponding finitely generated trivially scalable commutative monoid
over a field cannot have a finite basis since this would contradict
Proposition \ref{s3.7-1}. Thus, a finitely generated commutative
scalable monoid over a field need not be a finitely generated quantity
space. (This may be generalized to the case of infinite bases.)
\begin{prop}
\label{p33}If $Q$ is a (finitely generated) quantity space and $\mathsf{B}=\left\{ \mathsf{b}_{1},\ldots,\mathsf{b}_{n}\right\} $
is a basis for $Q/{\sim}$ such that for each $\mathsf{b}_{i}\in\mathsf{B}$
there is is a non-zero quantity $b_{i}\in Q$ such that $\mathsf{b}_{i}=\left[b_{i}\right]$
then $B=\left\{ b_{1},\ldots,b_{n}\right\} $ is a basis for $Q$
with the same cardinality as $\mathsf{B}$.
\end{prop}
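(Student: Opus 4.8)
The plan is to take an arbitrary quantity $x\in Q$ and show it has a unique expansion $x=\mu\cdot\prod_{i=1}^{n}b_{i}^{k_{i}}$, and then to check that $\left|B\right|=n$. Throughout I would fix some basis $C$ for $Q$ (which exists since $Q$ is a quantity space) purely so that I can use the measure $\mu_{C}$ and its multiplicativity (Propositions \ref{p26} and \ref{s2.6}) as a bookkeeping device for non-vanishing; I would also use that $Q/{\sim}$ is a free abelian group (Section \ref{s26}) with $\mathsf{B}=\left\{\left[b_{1}\right],\ldots,\left[b_{n}\right]\right\}$ as a basis, which is exactly the hypothesis $\mathsf{b}_{i}=\left[b_{i}\right]$.

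For existence, given $x\in Q$ I would pass to the dimension $\left[x\right]\in Q/{\sim}$ via the homomorphism $\phi:x\mapsto\left[x\right]$ of Proposition \ref{s3.3-1}. Since $\mathsf{B}$ is a basis for the group $Q/{\sim}$, there are integers $k_{1},\ldots,k_{n}$ with $\left[x\right]=\prod_{i=1}^{n}\left[b_{i}\right]^{k_{i}}=\left[\prod_{i=1}^{n}b_{i}^{k_{i}}\right]$. Writing $w=\prod_{i=1}^{n}b_{i}^{k_{i}}$, each $b_{i}$ is non-zero, hence invertible (Proposition \ref{p22-1}) with $\mu_{C}\left(b_{i}\right)\neq0$ (Proposition \ref{p20}); multiplicativity of $\mu_{C}$ then gives $\mu_{C}\left(w\right)=\prod_{i}\mu_{C}\left(b_{i}\right)^{k_{i}}\neq0$, so $w$ is non-zero. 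By Proposition \ref{p22} a non-zero quantity is a unit quantity for its own dimension, and $w\sim x$ because $\left[w\right]=\left[x\right]$; hence there is a (unique) scalar $\mu$ with $x=\mu\cdot w=\mu\cdot\prod_{i=1}^{n}b_{i}^{k_{i}}$.

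For uniqueness, suppose $x=\mu\cdot\prod_{i}b_{i}^{k_{i}}=\mu'\cdot\prod_{i}b_{i}^{\ell_{i}}$. Applying $\phi$ and using $\left[\lambda\cdot y\right]=\left[y\right]$ (Corollary \ref{c31}) gives $\prod_{i}\left[b_{i}\right]^{k_{i}}=\prod_{i}\left[b_{i}\right]^{\ell_{i}}$ in $Q/{\sim}$, so $k_{i}=\ell_{i}$ for all $i$ because $\mathsf{B}$ is a basis of the group. Then the two monomials coincide as the single non-zero quantity $w$, and $\mu\cdot w=\mu'\cdot w$ forces $\mu=\mu'$ since $w$ is a unit quantity. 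This establishes that $B$ is a (strong) basis for $Q$. Finally, the $b_{i}$ are pairwise distinct: if $b_{i}=b_{j}$ then $\mathsf{b}_{i}=\left[b_{i}\right]=\left[b_{j}\right]=\mathsf{b}_{j}$, contradicting that $\mathsf{B}$ has $n$ distinct elements; hence $\left|B\right|=n=\left|\mathsf{B}\right|$.

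The work here is essentially bookkeeping: the one place to be careful is verifying that the lifted monomial $w$ is non-zero, so that Proposition \ref{p22} applies and pins down the scalar $\mu$. I expect this non-vanishing step --- together with keeping the roles of the group-theoretic uniqueness (which fixes the exponents) and the unit-quantity uniqueness (which fixes the scalar) cleanly separated --- to be the only real content; everything else follows formally from $Q/{\sim}$ being a free abelian group with basis $\mathsf{B}$.
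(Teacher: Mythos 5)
Your proof is correct and follows essentially the same route as the paper's: lift the expansion of $\left[x\right]$ from $Q/{\sim}$, check that the lifted monomial $\prod_{i=1}^{n}b_{i}^{k_{i}}$ is non-zero so that Proposition \ref{p22} yields a unique scalar, derive exponent uniqueness by applying the quotient map and the basis property of $\mathsf{B}$, and get the cardinality claim from disjointness of dimensions. The only cosmetic difference is the non-vanishing step, where the paper uses closure of non-zero quantities under products directly (Proposition \ref{p23}) while you route the same fact through measures relative to an auxiliary basis $C$ (Propositions \ref{p20}, \ref{p26}, \ref{s2.6}).
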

\begin{proof}
Consider the function $\psi:\mathsf{B}\rightarrow\psi\left(\mathsf{B}\right)$
given by $\psi\left(\mathsf{b}_{i}\right)=b_{i}$. $\psi$ is surjective,
and we have $\psi\left(\mathsf{B}\right)=\left\{ b_{1},\ldots,b_{n}\right\} $.
Also, if $\left[b_{i}\right]\neq\left[b_{i'}\right]$ then $b_{i}\neq b_{i'}$
since dimensions are disjoint, meaning that  $\psi$ is injective
as well and hence a bijection.

Let $x$ be an arbitrary quantity in $Q$. As $\mathsf{B}$ is a basis
for $Q/{\sim}$, we have $\left[x\right]=\prod_{i=1}^{n}\left[b_{i}\right]^{k_{i}}=\left[\prod_{i=1}^{n}b_{i}^{k_{i}}\right]$
for some integers $k_{1},\ldots,k_{n}$, and if $b_{i}\neq0$ for
each $b_{i}$ then $\prod_{i=1}^{n}b_{i}^{k_{i}}$ is non-zero and
thus a unit quantity for $\left[x\right]$ by Proposition \ref{p22},
so there exists a unique $\mu\in K$ such that $x=\mu\cdot\prod_{i=1}^{n}b_{i}^{k_{i}}$. 

Also, if $x=\mu\cdot\prod_{i=1}^{n}b_{i}^{k_{i}}=\nu\cdot\prod_{i=1}^{n}b_{i}^{\ell_{i}}$
then $\left[\mu\cdot\prod_{i=1}^{n}b_{i}^{k_{i}}\right]=\left[\nu\cdot\prod_{i=1}^{n}b_{i}^{\ell_{i}}\right]$,
so $\left[\prod_{i=1}^{n}b_{i}^{k_{i}}\right]=\left[\prod_{i=1}^{n}b_{i}^{\ell_{i}}\right]$,
so $\prod_{i=1}^{n}\left[b_{i}\right]^{k_{i}}=\prod_{i=1}^{n}\left[b_{i}\right]^{\ell_{i}}$,
so $k_{i}=\ell_{i}$ for $i=1,\ldots,n$, since $\mathsf{B}$ is a
basis for $Q/\sim$, so $\nu=\mu$ by the uniqueness of $\mu$. We
have thus shown that $B$ is a basis for $Q$.
\end{proof}
As $\mathsf{b}_{i}=\left[\psi\left(\mathsf{b}_{i}\right)\right]$
and $\psi$ is a bijection between $\mathsf{B}$ and $B$, Proposition
\ref{p33} implies the following fact.
\begin{cor}
\label{c8}Let $Q$ be a (finitely generated) quantity space. For
every basis for $Q/{\sim}$ there exists a similar basis for $Q$.
\end{cor}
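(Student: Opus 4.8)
The plan is to observe that Corollary \ref{c8} is nothing more than Proposition \ref{p33} with its standing hypothesis---that each $\mathsf{b}_i$ admits a \emph{non-zero} representative in $Q$---discharged. So the entire task reduces to showing that in a finitely generated quantity space every dimension, i.e.\ every element of $Q/{\sim}$, contains at least one non-zero quantity. Once that is in hand, the corollary follows immediately by invoking Proposition \ref{p33}.

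To manufacture the non-zero representatives, I would first fix any basis $C=\{c_1,\ldots,c_m\}$ for $Q$, which exists because $Q$ is a finitely generated quantity space. Given a dimension $\mathsf{b}_i$ from the basis $\mathsf{B}=\{\mathsf{b}_1,\ldots,\mathsf{b}_n\}$ for $Q/{\sim}$, I would pick any $x\in\mathsf{b}_i$ and write its expansion $x=\mu\cdot\prod_{j=1}^{m}c_j^{k_j}$ relative to $C$, then set $b_i=1\cdot\prod_{j=1}^{m}c_j^{k_j}$. Since $x$ and $b_i$ have identical exponent patterns, Lemma \ref{s3.3} gives $x\sim b_i$, whence $[b_i]=[x]=\mathsf{b}_i$; and $b_i$ is non-zero by Proposition \ref{p20}, its measure relative to $C$ being $1\neq0$ in the field. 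Thus for each $i$ I obtain a non-zero $b_i\in Q$ with $[b_i]=\mathsf{b}_i$.

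With these $b_i$ chosen, the hypothesis of Proposition \ref{p33} is satisfied, so $B=\{b_1,\ldots,b_n\}$ is a basis for $Q$ of the same cardinality as $\mathsf{B}$. Because $[b_i]=\mathsf{b}_i$ for every $i$ by construction and $|B|=|\mathsf{B}|=n$, the basis $B$ is \emph{similar} to $\mathsf{B}$ in the sense defined just before Corollary \ref{c6}, which is precisely the assertion.

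I do not anticipate a genuine obstacle here, since all the substantive work already lives in Proposition \ref{p33}. The single point needing care is the routine verification that every dimension of a quantity space is nontrivial---that it contains a non-zero element---and this is exactly what the appeal to Lemma \ref{s3.3} together with Proposition \ref{p20} secures.
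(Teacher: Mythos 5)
Your proof is correct and takes essentially the same route as the paper: Corollary \ref{c8} is deduced directly from Proposition \ref{p33}, with the similarity of $B$ and $\mathsf{B}$ coming from the construction $\left[b_{i}\right]=\mathsf{b}_{i}$. The only difference is that you explicitly verify the hypothesis of Proposition \ref{p33} --- that every dimension contains a non-zero representative, which you obtain by normalizing the coefficient in an expansion relative to a fixed basis for $Q$ and invoking Lemma \ref{s3.3} and Proposition \ref{p20} --- a point the paper leaves implicit.
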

The next fact concerns the invariance of basis numbers.
\begin{prop}
\label{s3.8}Let $Q$ be a (finitely generated) quantity space. Any
two bases for $Q$ and any two bases for $Q/{\sim}$ have the same
cardinality. 
\end{prop}
\begin{proof}
We use the fact that any two bases of a (finitely generated) free
abelian group have the same cardinality. Thus, any two bases for $Q/{\sim}$
have the same cardinality. Also, if $\left\{ b_{1},\ldots,b_{n}\right\} $
and $\left\{ b_{1}',\ldots,b_{m}'\right\} $ are bases for $Q$ then
$\left\{ \left[b_{1}\right],\ldots,\left[b_{n}\right]\right\} $ and
$\left\{ \left[b_{1}'\right],\ldots,\left[b_{m}'\right]\right\} $
are bases for $Q/{\sim}$ of cardinality $n$ and $m$, respectively,
so $n=m$.
\end{proof}
Recall that a free module of rank $n$ is a module with a basis and
such that all bases have the same cardinality $n$. Defining the rank
of a commutative scalable monoid analogously, we can say that finitely
generated quantity spaces are free of finite rank, as are finite-dimensional
vector spaces.

\pagebreak{}

\section{\label{sec:3-16}Laurent monomials in \emph{n} indeterminates as
quantities}

Recall that if $K$ is a field then $K^{n}$ is a vector space with
operations defined by 
\begin{gather*}
\lambda\cdot\left(x_{1},\ldots,x_{n}\right)=\left(\lambda x_{1},\ldots,\lambda x_{n}\right),\\
\left(x_{1},\ldots,x_{n}\right)+\left(y_{1},\ldots,y_{n}\right)=\left(x_{1}+y_{1},\ldots,x_{n}+y_{n}\right),
\end{gather*}
 and every $n$-dimensional vector space over $K$ is isomorphic to
a vector space $K^{n}$. A quantity space has a similar representation.

We call a term of the form
\begin{equation}
\lambda X_{1}^{k_{1}}\ldots X_{n}^{k_{n}},\label{eq:131}
\end{equation}
where $X_{1},\ldots,X_{n}$ are uninterpreted symbols, $\lambda$
belongs to a field $K$ and $k_{1},\ldots,k_{n}$ are integers, a
\emph{Laurent monomial in $n$ indeterminates}, and we denote the
set of all terms of the form (\ref{eq:131}) by $K\left\llbracket X_{1},\ldots,X_{n}\right\rrbracket $.
\begin{prop}
Let $s=\kappa X_{1}^{j_{1}}\ldots X_{n}^{j_{n}}$ and $t=\lambda X_{1}^{k_{1}}\ldots X_{n}^{k_{n}}$
be any elements of \textup{$K\left\llbracket X_{1},\ldots,X_{n}\right\rrbracket $.}
For any field $K$, $K\left\llbracket X_{1},\ldots,X_{n}\right\rrbracket $
is a finitely generated quantity space with operations $\left(\alpha,t\right)\mapsto\alpha\cdot t$
and $\left(s,t\right)\mapsto st$ defined by 
\begin{gather*}
\alpha\cdot\lambda X_{1}^{k_{1}}\ldots X_{n}^{k_{n}}=\alpha\lambda X_{1}^{k_{1}}\ldots X_{n}^{k_{n}},\\
\kappa X_{1}^{j_{1}}\ldots X_{n}^{j_{n}}\lambda X_{1}^{k_{1}}\ldots X_{n}^{k_{n}}=\kappa\lambda X_{1}^{\left(j_{1}+k_{1}\right)}\ldots X_{n}^{\left(j_{n}+k_{n}\right)}.
\end{gather*}
\end{prop}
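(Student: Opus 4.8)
The plan is to recognize $K\llbracket X_1,\ldots,X_n\rrbracket$ as an instance of the ring-monoid construction of Section \ref{subsec:19} and then invoke the results already established there. Concretely, let $M$ denote the free abelian group on the symbols $X_1,\ldots,X_n$, written multiplicatively, so that its elements are exactly the monomials $X_1^{k_1}\cdots X_n^{k_n}$ with $k_i\in\mathbb{Z}$ and its product is $\left(X_1^{j_1}\cdots X_n^{j_n}\right)\left(X_1^{k_1}\cdots X_n^{k_n}\right)=X_1^{j_1+k_1}\cdots X_n^{j_n+k_n}$. The assignment $\lambda X_1^{k_1}\cdots X_n^{k_n}\mapsto\left\langle\lambda,X_1^{k_1}\cdots X_n^{k_n}\right\rangle$ then identifies $K\llbracket X_1,\ldots,X_n\rrbracket$ with the underlying set $K\times M$ of the ring-monoid $K\boxtimes M$, and a term-by-term comparison shows that the two operations given in the statement are precisely the ring-monoid operations of Definition \ref{d13}: the product formula matches $\left\langle\kappa,x\right\rangle\left\langle\lambda,y\right\rangle=\left\langle\kappa\lambda,xy\right\rangle$ and the scaling formula matches $\alpha\cdot\left\langle\lambda,x\right\rangle=\left\langle\alpha\lambda,x\right\rangle$.

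Since $K$ is a field and hence a commutative ring, Proposition \ref{p17} immediately gives that $K\boxtimes M=K\llbracket X_1,\ldots,X_n\rrbracket$ is a scalable monoid over $K$, with identity $\mathbf{1}=1\cdot X_1^0\cdots X_n^0$. Commutativity of the monoid multiplication is read directly off the product formula, since $\kappa\lambda=\lambda\kappa$ in $K$ and the exponents add commutatively in $M$; thus $K\llbracket X_1,\ldots,X_n\rrbracket$ is a \emph{commutative} scalable monoid over a field, so by Definition \ref{d2.4} it remains only to exhibit a strong finite basis in the sense of Definition \ref{d2.3}.

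I would take $B=\{X_1,\ldots,X_n\}$, where $X_i=1\cdot X_1^0\cdots X_i^1\cdots X_n^0$. Every element is by construction of the form $\lambda\cdot\prod_{i=1}^n X_i^{k_i}$, so $B$ generates; and each $X_i$ is invertible, with inverse $1\cdot X_i^{-1}$, since $X_i\left(1\cdot X_i^{-1}\right)=\mathbf{1}$. The one point that genuinely carries the argument is \emph{uniqueness} of the expansion, and this is where I would be careful: it rests entirely on the convention, built into the definition of the set of Laurent monomials, that two formal terms are equal if and only if their coefficients agree and their full exponent tuples agree. Under this convention $\lambda\cdot\prod X_i^{k_i}=\mu\cdot\prod X_i^{\ell_i}$ forces $\lambda=\mu$ and $k_i=\ell_i$ for all $i$ (equivalently, freeness of $M$ on $X_1,\ldots,X_n$), so the expansion is unique and $B$ is a strong finite basis. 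I expect no real obstacle beyond this bookkeeping; the only alternative, had the ring-monoid machinery not been available, would have been to verify associativity and the three axioms of Definition \ref{thm:def1} by hand, which is mechanical but tedious.
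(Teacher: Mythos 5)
Your proposal is correct, but it takes a genuinely different route from the paper's own proof. The paper verifies everything from scratch: it checks associativity, commutativity and the identity element of the multiplication of Laurent monomials directly, then verifies the three scaling axioms of Definition \ref{thm:def1} by explicit computation with coefficients and exponents, and finally exhibits the basis $\left\{ b_{1},\ldots,b_{n}\right\} $ with $b_{i}=1X_{1}^{\delta_{1}}\cdots X_{n}^{\delta_{n}}$ together with the inverses $b_{i}^{-1}$. You instead factor all of that verification through the ring-monoid machinery of Section \ref{subsec:19}: identifying $K\left\llbracket X_{1},\ldots,X_{n}\right\rrbracket $ with $K\boxtimes M$ for $M$ the free abelian group on $X_{1},\ldots,X_{n}$, you get the scalable-monoid axioms for free from Proposition \ref{p17}, and only commutativity and the strong finite basis remain to be checked by hand. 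This is exactly the connection the paper itself gestures at but never carries out in detail (the remark after Proposition \ref{p21} that a ring-monoid $R\boxtimes M$ with $R$ a field and $M$ a free abelian group ``is'' a quantity space), so your argument has the merit of making that claim precise while shortening the proof; its cost is the identification step, which silently carries the whole content of the formal-equality convention for terms \textendash{} you rightly flag that both the well-definedness of the bijection onto $K\times M$ and the uniqueness of expansions rest on it. The paper's hands-on computation buys self-containedness: it never leaves the Laurent-monomial formalism and so needs no identification lemma, at the price of redoing verifications that Proposition \ref{p17} already contains. Both proofs converge at the end, since uniqueness of the expansion $\lambda\cdot\prod_{i=1}^{n}b_{i}^{k_{i}}$ is in either case just freeness of the exponent data.
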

\begin{proof}
Multiplication of terms in $K\left\llbracket X_{1},\ldots,X_{n}\right\rrbracket $
is associative and commutative because addition of integers and multiplication
of elements of $K$ is associative and commutative, and setting
\[
\boldsymbol{1}=1X_{1}^{0}\ldots X_{n}^{0}
\]
 we clearly have $\boldsymbol{1}t=t\boldsymbol{1}=t$. Furthermore,
\begin{gather*}
1\cdot t=1\cdot\lambda X_{1}^{k_{1}}\ldots X_{n}^{k_{n}}=1\lambda X_{1}^{k_{1}}\ldots X_{n}^{k_{n}}=t,\\
\alpha\cdot\left(\beta\cdot t\right)=\alpha\cdot\left(\beta\cdot\lambda X_{1}^{k_{1}}\ldots X_{n}^{k_{n}}\right)=\alpha\cdot\beta\lambda X_{1}^{k_{1}}\ldots X_{n}^{k_{n}}=\\
\alpha\beta\lambda X_{1}^{k_{1}}\ldots X_{n}^{k_{n}}=\alpha\beta\cdot\lambda X_{1}^{k_{1}}\ldots X_{n}^{k_{n}}=\alpha\beta\cdot t
\end{gather*}
and
\begin{gather*}
\alpha\kappa X_{1}^{j_{1}}\ldots X_{n}^{j_{n}}\lambda x_{1}^{k_{1}}\ldots x_{n}^{k_{n}}=\alpha\kappa\lambda X_{1}^{\left(j_{1}+k_{1}\right)}\ldots X_{n}^{\left(j_{n}+k_{n}\right)}=\\
\kappa\alpha\lambda X_{1}^{\left(j_{1}+k_{1}\right)}\ldots X_{n}^{\left(j_{n}+k_{n}\right)}=\kappa X_{1}^{j_{1}}\ldots X_{n}^{j_{n}}\alpha\lambda X_{1}^{k_{1}}\ldots X_{n}^{k_{n}}
\end{gather*}
since multiplication in $K$ is associative and commutative, so $\alpha\cdot st=\left(\alpha\cdot s\right)t=s\left(\alpha\cdot t\right)$
since $\alpha\kappa X_{1}^{j_{1}}\ldots X_{n}^{j_{n}}=\alpha\cdot s$
and so on.

$K\left\llbracket X_{1},\ldots,X_{n}\right\rrbracket $ is thus a
commutative scalable monoid over a field; it remains to exhibit a
strong finite basis. For $i=1,\ldots,n$, set $b_{i}=1X_{1}^{\delta_{1}}\cdots X_{n}^{\delta_{n}}$,
where $\delta_{j}=1$ for $i=j$ and $\delta_{j}=0$ for $i\neq j$,
and $b_{i}^{-1}=1X_{1}^{\vartheta_{1}}\cdots X_{n}^{\vartheta_{n}}$,
where $\vartheta_{j}=-1$ for $i=j$ and $\vartheta_{j}=0$ for $i\neq j$,
so that $b_{i}b_{i}^{-1}=b_{i}^{-1}b_{i}=\boldsymbol{1}$. It is clear
that $t$ has a unique representation of the form
\[
t=\lambda\cdot\prod_{i=1}^{n}b_{i}^{k_{i}},
\]
 where $b_{i}^{0}=\boldsymbol{1}$, $b_{i}^{k_{i}}=b_{i}^{\left(k_{i}-1\right)}b_{i}$
for $k_{i}>0$ and $b_{i}^{k_{i}}=b_{i}^{\left(k_{i}+1\right)}b_{i}^{-1}$
for $k_{i}<-1$, so $\left\{ b_{1},\ldots,b_{n}\right\} $ is a basis
for $K\left\llbracket X_{1},\ldots,X_{n}\right\rrbracket $.
\end{proof}
\begin{prop}
For every finitely generated quantity space $Q$ over $K$ there exists
an isomorphic quantity space $K\left\llbracket X_{1},\ldots,X_{n}\right\rrbracket $.
\end{prop}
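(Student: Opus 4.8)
The plan is to fix a basis for $Q$ and use it to set up an explicit correspondence with Laurent monomials, exploiting the fact that uniqueness of basis expansions holds in both algebras. Since $Q$ is a finitely generated quantity space over $K$, by Definition \ref{d2.4} there is a (strong) finite basis $B=\left\{ b_{1},\ldots,b_{n}\right\}$ for $Q$, and I take $K\left\llbracket X_{1},\ldots,X_{n}\right\rrbracket$ with this same $n$; by the preceding proposition the latter is a quantity space with basis $\left\{ \beta_{1},\ldots,\beta_{n}\right\}$, where $\beta_{i}$ is the monomial $1X_{i}$ (coefficient $1$, exponent $1$ in the $i$-th indeterminate and $0$ elsewhere). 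I would define
\[
\phi:Q\rightarrow K\left\llbracket X_{1},\ldots,X_{n}\right\rrbracket,\qquad \mu\cdot\prod_{i=1}^{n}b_{i}^{k_{i}}\mapsto\mu X_{1}^{k_{1}}\ldots X_{n}^{k_{n}}.
\]
This is well-defined precisely because every $x\in Q$ has a \emph{unique} expansion $x=\mu\cdot\prod_{i=1}^{n}b_{i}^{k_{i}}$, so the array $\left(\mu,k_{1},\ldots,k_{n}\right)$, and hence the target monomial, is determined by $x$. In particular $\phi\left(b_{i}\right)=\beta_{i}$ and $\phi\left(\boldsymbol{1}\right)=1X_{1}^{0}\ldots X_{n}^{0}=1_{K\left\llbracket X_{1},\ldots,X_{n}\right\rrbracket}$.

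Next I would verify that $\phi$ is a scalable monoid homomorphism. Scalar compatibility follows from $\lambda\cdot\left(\mu\cdot\prod_{i=1}^{n}b_{i}^{k_{i}}\right)=\lambda\mu\cdot\prod_{i=1}^{n}b_{i}^{k_{i}}$ by Lemma \ref{thm:lem1}, which maps to $\lambda\mu X_{1}^{k_{1}}\ldots X_{n}^{k_{n}}=\lambda\cdot\phi\left(x\right)$, matching the scalar action on $K\left\llbracket X_{1},\ldots,X_{n}\right\rrbracket$. Multiplicativity follows from Proposition \ref{p19a}(2): if $x=\mu\cdot\prod_{i=1}^{n}b_{i}^{k_{i}}$ and $y=\nu\cdot\prod_{i=1}^{n}b_{i}^{\ell_{i}}$ then $xy=\mu\nu\cdot\prod_{i=1}^{n}b_{i}^{\left(k_{i}+\ell_{i}\right)}$, whose image is $\mu\nu X_{1}^{\left(k_{1}+\ell_{1}\right)}\ldots X_{n}^{\left(k_{n}+\ell_{n}\right)}$, exactly the product $\phi\left(x\right)\phi\left(y\right)$ under the multiplication defined on $K\left\llbracket X_{1},\ldots,X_{n}\right\rrbracket$. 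Together with the identity-preservation already noted, this shows $\phi$ is a homomorphism.

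Finally I would show $\phi$ is bijective by exhibiting an explicit inverse. Define $\psi:K\left\llbracket X_{1},\ldots,X_{n}\right\rrbracket\rightarrow Q$ by $\psi\left(\mu X_{1}^{k_{1}}\ldots X_{n}^{k_{n}}\right)=\mu\cdot\prod_{i=1}^{n}b_{i}^{k_{i}}$; this is well-defined because each Laurent monomial determines $\mu$ and the exponents $k_{1},\ldots,k_{n}$ uniquely. One checks directly that $\psi\circ\phi=\mathrm{Id}_{Q}$ and $\phi\circ\psi=\mathrm{Id}_{K\left\llbracket X_{1},\ldots,X_{n}\right\rrbracket}$, again using uniqueness of expansions in $Q$ and uniqueness of the monomial representation. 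Hence $\phi$ is a scalable monoid isomorphism and $Q\cong K\left\llbracket X_{1},\ldots,X_{n}\right\rrbracket$. The only genuine subtlety is that everything hinges on the \emph{uniqueness} half of the basis property in both structures; once that is invoked, the well-definedness of $\phi$ and $\psi$ and their being mutually inverse are immediate, so no case analysis or delicate estimate is required.
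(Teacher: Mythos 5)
Your proposal is correct and follows essentially the same route as the paper: the paper's proof also fixes a basis $B=\{b_{1},\ldots,b_{n}\}$ for $Q$ and defines the map $x=\lambda\cdot\prod_{i=1}^{n}b_{i}^{k_{i}}\mapsto\lambda X_{1}^{k_{1}}\ldots X_{n}^{k_{n}}$, merely asserting that it is ``easy to verify'' this is an isomorphism of scalable monoids. Your write-up supplies exactly those omitted verifications (well-definedness from uniqueness of expansions, the homomorphism laws via Lemma \ref{thm:lem1} and Proposition \ref{p19a}, and the explicit inverse), so it is a fleshed-out version of the paper's argument rather than a different one.
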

\begin{proof}
Let $B=\left\{ b_{1},\ldots,b_{n}\right\} $ be a basis for $Q$ and
let $x\in Q$ have the unique expansion $x=\lambda\cdot\prod_{i=1}^{n}b_{i}^{k_{i}}$.
It is easy to verify that the mapping 
\[
\Phi:Q\rightarrow K\left\llbracket X_{1},\ldots,X_{n}\right\rrbracket ,\qquad x\mapsto\lambda X_{1}^{k_{1}}\ldots X_{n}^{k_{n}}
\]
is an isomorphism of scalable monoids between $Q$ and $K\left\llbracket X_{1},\ldots,X_{n}\right\rrbracket $.
\end{proof}
We have thus seen a simple representation of quantity spaces, in addition
to the simple axiomatic definition given earlier. While a module over
a field is a vector space, a commutative scalable monoid over a field
is unfortunately not always a quantity space, but the theory of quantity
spaces is not significantly more complicated or inaccessible than
that of vector spaces, and should belong to the basic tool-set of
mathematicians and physicists.

\end{document}